\title{A volume-based approach to the Multiplicative Ergodic Theorem on Banach spaces}
\author{Alex Blumenthal}
\address{Courant Institute of Mathematical Sciences\\
New York University\\
New York, NY 10012, USA}
\email[A. Blumenthal]{alex@cims.nyu.edu}
\subjclass{Primary: 37H15, Secondary: 37L30}
\keywords{Banach space, Lyapunov exponents, Gelfand numbers}
\theoremstyle{theorem}
\newtheorem{thm}{Theorem}[section]
\newtheorem{cor}[thm]{Corollary}
\newtheorem{lem}[thm]{Lemma}
\newtheorem{prop}[thm]{Proposition}
\theoremstyle{definition}
\newtheorem{defn}[thm]{Definition}
\newtheorem{rmk}[thm]{Remark}
\newtheorem{cla}[thm]{Claim}
\numberwithin{equation}{section}
\newcommand{\N}{\mathbb{N}}
\newcommand{\R}{\mathbb{R}}
\newcommand{\Fc}{\mathcal{F}}
\newcommand{\Gc}{\mathcal{G}}
\renewcommand{\a}{\alpha}
\renewcommand{\d}{\delta}
\newcommand{\e}{\epsilon}
\renewcommand{\l}{\lambda}
\newcommand{\Id}{\text{Id}}
\newcommand{\Bc}{\mathcal{B}}
\renewcommand{\tilde}{\widetilde}
\newcommand{\ds}{/ \! /}
\newcommand{\Sc}{\mathcal{S}}
\newcommand{\Oc}{\mathcal{O}}
\newcommand{\ol}{\overline{\lambda}}
\newcommand{\ul}{\underline{\lambda}}
\newcommand{\codim}{\operatorname{codim}}
\begin{document}

\maketitle

\begin{abstract}
A volume growth-based proof of the Multiplicative Ergodic Theorem for Banach spaces is presented, following the approach of Ruelle for cocycles acting on a Hilbert space. As a consequence, we obtain a volume growth interpretation for the Lyapunov exponents of a Banach space cocycle.
\end{abstract}

\section{Introduction and Statement of Results}

\subsection{Introduction}

The purpose of this paper is to present a volume-based proof of the Multiplicative Ergodic Theorem (MET) for cocycles on a Banach space, from which a volume growth rate interpretation of Lyapunov exponents for such cocycles will be readily apparent.

By now, there are many proofs of the MET for cocycles on finite dimensional vector spaces, e.g., \cite{O, Ra, R1, W}; for additional treatments, see Section 3.4 of \cite{A} or \S 1.5 of \cite{Kr}. For applications, particularly for the purposes of smooth ergodic theory, the most successful interpretation of Lyapunov exponents is that of volume growth: the exponential growth rate of the $q$-dimensional volume of a `generic' $q$-dimensional subspace is the sum of the first $q$ Lyapunov exponents.


The volume growth interpretation for the Lyapunov exponents for cocycles of operators on finite dimensional spaces is implicit in Ruelle's proof of the MET \cite{R1}: the $q$-th Lyapunov exponent is defined as the exponential growth rate of the $q$-th singular value of high iterates of the cocycle, and it is shown that the `slow' growing singular value decomposition subspaces for high iterates of the cocycle converge at an exponentially fast rate to the subspaces of `slow' growing vectors for the cocycle. The very same approach is used to prove the MET for compact cocycles on Hilbert spaces in \cite{R2}, and the volume growth interpretation is the same, since the finite dimensional subspaces of a Hilbert space carry notions of volume and determinant inherited from the inner product on the ambient Hilbert space. 

For cocycles on Banach spaces, the MET was originally proved by Ma{\~n}{\'e} for compact, injective cocycles satisfying a certain continuity condition \cite{M}.  Thieullen extended Ma{\~n}{\'e}'s result to include injective cocycles satisfying a quasicompactness condition \cite{T}, and Lian and Lu proved a version of the MET for injective cocycles on separable Banach spaces satisfying only a measurability condition \cite{LL}. The method in these papers is to show that the `fast' growing subspaces (that is, subspaces of vectors achieving a sufficient rate of exponential contraction in backwards time) are almost surely finite dimensional.  Then, Lyapunov exponents are defined to be the exponential rates at which the dimension of the `fast' growing subspaces has a jump discontinuity, and it is shown that the `slow' growing subspaces are the graphs of mappings given by a certain convergent infinite series of graphing maps.

Thieullen also considered non-injective cocycles \cite{T}, and deduced from the MET for injective cocycles that even when the cocycles aren't injective, the `slow' growing subspaces are finite-codimensional and vary measurably, and the only possible growth rates of vectors are  Lyapunov exponents; this result is referred to as the One-Sided MET, and holds even when the base dynamics are noninvertible. Thieullen's proof in \cite{T} involves passing to an injective `natural extension' cocycle over an augmented Banach space. Doan, in his PhD thesis \cite{Doan}, carried out an analogous deduction for strongly measurable cocycles on a separable Banach space, assuming the MET in \cite{LL}.

Quas, Froyland and Lloyd \cite{QLF} deduced from the One-Sided MET in \cite{T} that even when the cocycle is not injective, there is still an invariant, measurably-varying distribution of `fast' growing subspaces, as long as the base dynamics are invertible. Quas and Gonz{\'a}lez-Tokman \cite{QT} also deduced the existence of an invariant distribution of `fast'-growing subspaces for cocycles on a separable Banach space satisfying the measurability condition in \cite{LL}, assuming the One-Sided MET in \cite{Doan}.

In all known proofs of the Banach space MET, genuine finite-dimensional volumes (with multiplicative determinants) are not used, and there is no apparent volume growth interpretation of the Lyapunov exponents. So, it is natural to ask whether there can be a volume growth interpretation for Lyapunov exponents of cocyles acting on a Banach space, noting that there is no trouble assigning volumes to the finite dimensional subspaces of a Banach space, using Haar measure, and that one can define determinants as volume ratios taken using these volumes. 



Indeed, once the MET is proven by other means, it is possible to show that Lyapunov exponents have the very same volume growth interpretation as in the Hilbert space case. In light of Ruelle's approach to the MET in \cite{R1, R2}, though, this order of actions is  strange: volume growth is used to prove the MET, and the volume growth interpretation of the Lyapunov exponents follows naturally.

The purpose of the present paper is to show how to deduce the Banach space MET using volume growth ideas, using arguments analogous to those in \cite{R2}, from which the volume growth rate interpretation of Lyapunov exponents will be easily seen. The proof we present treats non-injective cocycles with no added effort, and yields immediately that vectors sufficiently inclined away from `slow' growing subspaces grow uniformly quickly.

\medskip

After the work in the present manuscript was completed, the author was made aware of the similar work of A. Quas and C. Gonzalez-Tokman \cite{gonzalez2014concise}. The present work was carried out simultaneously with and independently of \cite{gonzalez2014concise}. We note that there are several technical differences between the precise statement of the MET in \cite{gonzalez2014concise} and that given in the present paper. 

\medskip

For the rest of this section, we will define and describe the volumes on finite dimensional subspaces that we use, state assumptions on the cocycles we will study, and state the MET. In Section 2, we will give preliminaries on the geometry of Banach spaces and the volumes on finite dimensional subspaces necessary for the proof, which will be given in Section 3. 

\subsection{Statement of Results}

We now state our results precisely, beginning with an overview of volumes on finite dimensional subspaces of a Banach space.

\subsubsection{Volumes and Determinants on Finite Dimensional Subspaces}

We let $(\Bc, |\cdot|)$ be an infinite dimensional Banach space, and let $E \subset \Bc$ be a a finite dimensional subspace. Treating $E$ as a topological group with the group action given by addition of vectors, there exists a nonzero, translation invariant measure on $E$, unique up to a scalar factor: the Haar measure on $E$ (\S2.2 of \cite{F}). In the definition of volumes given below, we normalize the Haar measure on $E$ by assigning a particular volume to the $|\cdot |$- unit ball of $E$.


\begin{defn}\label{defn:indVol}
Let $q \in \N$ and let $E \subset \Bc$ be a $q$-dimensional subspace. We define the \emph{induced volume} $m_E$ on $E$ to be the unique Haar measure on $E$ for which
\[
m_E \{e \in E : |e| \leq 1\} = \omega_q,
\]
where $\omega_q$ is the $q$-dimensional volume of the Euclidean unit ball in $\R^q$.
\end{defn}

Many identities involving volume on inner product spaces can be recovered for the induced volume as inequalities, with uniform bounds depending only on the dimension $q$ of the subspace. The following is a prototypical example, and demonstrates that the induced volume really does `see' the geometry on finite dimensional subspaces.

\begin{lem}\label{lem:sampleVol}
If $E \subset \Bc$ has dimension $q < \infty$, then for any $v_1, \cdots, v_q \in E$, we have
\[
 m_E P[v_1, \cdots, v_q] \approx |v_q| \cdot \prod_{i = 1}^{q-1} d(v_i, \langle v_j : i < j \leq q \rangle), 
\]
where $P[v_1, \cdots, v_q] = \{c_1 v_1 + \cdots + c_q v_q : 0 \leq c_i \leq 1, 1 \leq i \leq q\}$ is the parallelepiped spanned by $v_1, \cdots, v_q$. For vectors $w_1, \cdots, w_q \in \Bc$, we denote by $\langle w_1, \cdots, w_k \rangle$ the subspace of $\Bc$ spanned by the vectors $w_1, \cdots, w_q$.  The symbol $\approx$ denotes equality up to a multiplicative constant depending only on $q$.
\end{lem}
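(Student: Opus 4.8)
The plan is to prove the estimate by induction on $q$, comparing the induced volume $m_E$ of a parallelepiped to the corresponding Euclidean quantity after choosing an auxiliary inner product on $E$, and then controlling the distortion of that comparison by a constant depending only on $q$. First I would fix a $q$-dimensional subspace $E$ and choose an inner product $\langle\cdot,\cdot\rangle_0$ on $E$ such that the induced Euclidean norm $|\cdot|_0$ satisfies $|\cdot|_0 \leq |\cdot| \leq \sqrt{q}\,|\cdot|_0$ on $E$; such an inner product exists by John's theorem (the John ellipsoid of the $|\cdot|$-unit ball of $E$), and this is really the only structural input needed. Writing $m_E^0$ for the Lebesgue measure on $(E, |\cdot|_0)$ normalized so that the $|\cdot|_0$-unit ball has measure $\omega_q$, the defining normalization of $m_E$ together with the norm comparison gives $q^{-q/2}\,m_E^0(A) \leq m_E(A) \leq m_E^0(A)$ for every Borel set $A \subset E$ — indeed $m_E$ and $m_E^0$ are both Haar measures on $E$, hence proportional, and evaluating on the $|\cdot|_0$-unit ball pins down the constant of proportionality to lie in $[q^{-q/2},1]$. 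Thus it suffices to prove the asserted identity with $m_E$ replaced by $m_E^0$, but now allowing the implied constant to depend on $q$ through the norm comparison.

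The next step is the inner-product computation, which is classical: in the Hilbert space $(E,\langle\cdot,\cdot\rangle_0)$ one has the exact Gram-determinant formula
\[
m_E^0 P[v_1,\dots,v_q] \;=\; \prod_{i=1}^{q} d_0\bigl(v_i, \langle v_j : i < j \leq q\rangle\bigr),
\]
where $d_0$ denotes distance in the $|\cdot|_0$-norm and the last factor ($i=q$) is simply $|v_q|_0$; this is proved by Gram--Schmidt, peeling off one vector at a time and using that Lebesgue measure of a parallelepiped equals base times (orthogonal) height. It then remains to replace each $d_0$ by the corresponding $|\cdot|$-distance $d$. For the term $|v_q|_0$ this is immediate from the two-sided norm comparison. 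For the distance terms $d_0(v_i, F)$ versus $d(v_i, F)$ with $F = \langle v_j : i<j\leq q\rangle$, I would note that distance to a fixed subspace is itself comparable under comparable norms: since $|\cdot|_0 \leq |\cdot| \leq \sqrt q\,|\cdot|_0$ on $E$, taking infima over $f \in F$ of $|v_i - f|$ versus $|v_i - f|_0$ yields $d_0(v_i,F) \leq d(v_i,F) \leq \sqrt q\, d_0(v_i,F)$. Multiplying these $q$ two-sided comparisons together and folding the resulting power of $\sqrt q$, together with the earlier $q^{-q/2}$ from the normalization, into the constant $\approx$, gives the claim. One should separately observe that if $v_1,\dots,v_q$ are linearly dependent then both sides vanish and the statement is trivially true, so we may assume independence throughout (and in particular that none of the distance factors is zero).

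The main obstacle — really the only non-bookkeeping point — is justifying the uniform two-sided comparison between $m_E$ and a Euclidean measure with a constant depending \emph{only} on $q$ and not on $E$; this is exactly what John's theorem supplies, since the John ellipsoid of any symmetric convex body in $\R^q$ approximates it within a factor $\sqrt q$. Everything downstream — the Gram--Schmidt volume formula and the stability of subspace-distance under comparable norms — is elementary and contributes only further $q$-dependent constants, which are harmless since $\approx$ already absorbs such factors. I would also remark that this same John's-theorem comparison is the mechanism by which all the other "approximate Hilbert space identities" alluded to before the lemma are obtained, so it is worth isolating as a standalone observation (and it presumably is, in Section 2).
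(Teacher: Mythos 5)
Your proposal takes essentially the same route as the paper: John's theorem to produce a comparable inner product on $E$, the exact Gram/Hadamard volume formula for a parallelepiped in that inner-product space (the paper's equation \eqref{eq:massForm}), and then a term-by-term comparison of the distance factors. The only bookkeeping difference is that the paper's Proposition \ref{prop:johnThm} rescales the John inner product so that its Lebesgue measure \emph{is} $m_E$ (at the cost of a symmetric two-sided norm bound $\frac{1}{\sqrt q}|\cdot| \leq \|\cdot\|_E \leq \sqrt q |\cdot|$), whereas you keep the un-rescaled John norm and separately compare $m_E$ with the Euclidean measure $m_E^0$.

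One small slip in that separate comparison: with $|\cdot|_0 \leq |\cdot| \leq \sqrt q\,|\cdot|_0$, the $|\cdot|$-unit ball is \emph{contained in} the $|\cdot|_0$-unit ball, so normalizing both Haar measures to give their respective unit balls mass $\omega_q$ forces $m_E^0 \leq m_E \leq q^{q/2}\,m_E^0$, i.e.\ the proportionality constant lies in $[1, q^{q/2}]$ — the reverse of what you wrote. Since all you use is a two-sided comparison with a $q$-dependent constant, this doesn't affect the validity of the argument, but the stated inequality should be corrected.
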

\noindent Using the tools developed in Section 2, the proof of Lemma \ref{lem:sampleVol} is straightforward, and is sketched in Remark \ref{rmk:sampleVolProof}.

With these volumes in place, the determinant is defined as a volume ratio: for a bounded linear map $A : \Bc \to \Bc$ and a finite dimensional subspace $ E \subset \Bc$, we write
\begin{equation} \label{eq:detDefinition}
\det(A | E) =  \begin{cases}
 \frac{m_{AE} A \Oc}{m_E \Oc} & A \text{ injective on } E, \\
 0 & \text{otherwise,}
 \end{cases}
\end{equation}
where $\Oc$ is any Borel subset of $E$ with nonzero $m_E$-volume. By the uniqueness of Haar measure, the right-hand side of \eqref{eq:detDefinition} does not depend on the set $\Oc \subset E$.

Define the maximal $q$-dimensional volume growth
\begin{equation}\label{eq:maxVolGrow}
V_q(A) = \sup\{\det(A  |E) : E \subset \Bc, \dim E = q\}
\end{equation}
for bounded, linear maps $A : \Bc \to \Bc$, and observe that since the determinant is multiplicative, being a Radon-Nikodym derivative, $V_q$ is submultiplicative, i.e., for bounded linear maps $A, B : \Bc \to \Bc$, we have $V_q(AB) \leq V_q(A) V_q(B)$.

\subsubsection{Statement of the MET}

Let $(X, \Fc, \mu)$ be a probability space, and let $f$ be a measure preserving transformation on $(X, \Fc, \mu)$. Throughout the paper, we assume for simplicity that $\mu$ is ergodic; the only real difference in the nonergodic case is that the Lyapunov exponents and their multiplicities may now depend on the point in the base space, but this complication does not change the underlying arguments in any substantive way.

A cocycle is a map $T : X \to L(\Bc)$, where $L(\Bc)$ is the space of bounded linear operators on $\Bc$, subject to the following measurability assumption.
\begin{align}\label{eq:measCondition}
\begin{split}
\text{There is a sequence of finite-valued} \text{ maps $T^{(n)} : X \to L(\Bc)$ such that for} \\
\text{any $x \in X$, $T^{(n)}_x \to T_x$ in } L(\Bc)  \text{ in the induced operator norm as $n \to \infty$.}
\end{split}
\end{align}
We write $T_x, T^{(n)}_x$ for $T, T^{(n)}$ evaluated at $x \in X$, respectively. The condition \eqref{eq:measCondition} is called \emph{uniform measurability} (Definition 3.5.5 in \cite{Hille}).

We write $T^n_x = T_{f^{n-1} x} \circ \cdots \circ T_x$, and for $\l \in \R$, we define the subspaces
\begin{align}\label{eq:slowGrowSpaces}
F_{\l}(x) = \{v \in \Bc : \limsup_{n \to \infty} \frac{1}{n} \log |T^n_x v| \leq \l \},
\end{align}
using the convention $\log 0 = - \infty$. For $A \in L(\Bc)$, we denote by $|A|_{\a}$ the measure of noncompactness of $A$, which will be defined precisely in Definition \ref{defn:measNoncompactness}.
 
%
We now state the main result of this paper, a version of the MET emphasizing the volume growth interpretation of the Lyapunov exponents of a Banach space cocycle. Note that we do not assume that $T_x$ is an injective operator.

\begin{thm} \label{thm:volMET}
Let $T: X \to L(\Bc)$ satisfy \eqref{eq:measCondition}, and assume moreover that
\[\int_X \log^+ |T_x| \; d \mu(x) < \infty.\]
Then, there is a $\mu$-full, $f$-invariant set $\Gamma \subset X$ with the following properties.
\begin{enumerate}
\item For $x \in \Gamma$, the $q$-dimensional growth rates
\[
l_q = \lim_{n \to \infty} \frac{1}{n} \log V_q(T^n_x)
\]
exist and are constant over $\Gamma$. Similarly, $l_{\a} = \lim_{n \to \infty} \frac{1}{n} \log |T^n_x|_{\a}$ exists and is constant over $\Gamma$.
\item Defining the sequence $\{K_q\}_{q \geq 1}$ by $K_1 = l_1$ and $K_q = l_q - l_{q - 1}$ for $q > 1$, we have that $K_1 \geq K_2 \geq \cdots$, and that $K_q \to l_{\a}$ as $q \to \infty$. We denote by $\l_i$ the distinct values of the sequence $\{K_q\}_q$ with finite multiplicities $m_i$; in particular, for any $l > l_\a$, we have that $\sum_{i : \l_i \geq l} m_i < + \infty$. The Lyapunov exponents $\l_i$ satisfy the following.
\begin{enumerate}
\item For any Lyapunov exponent $\l_i$, we have that $\codim F_{\l_i}(x) = m_1 + m_2 + \cdots + m_{i -1}$.
\item For any subsequent pair of Lyapunov exponents $\l_i > \l_{i + 1} \geq l_{\a}$ and for any $x \in \Gamma$, $v \in F_{\l_i}(x) \setminus F_{\l_{i + 1}}(x)$,
\[
\lim_{n \to \infty} \frac{1}{n} \log |T^n_x v| = \l_i.
\]
\item (Volume Growth) For any $x \in \Gamma$, any Lyapunov exponent $\l_i$, and any complement $E$ to $F_{\l_i}(x)$, we have that
\begin{align}\label{eq:correctVolGrow}
\lim_{n \to \infty} \frac{1}{n} \log \det(T^n_x | E) = m_1 \l_1 + \cdots + m_i \l_i.
\end{align}
\end{enumerate}
\item The mapping $x \mapsto F_{\l_i}(x)$ is measurable.
\end{enumerate}
\end{thm}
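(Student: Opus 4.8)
The plan is to follow Ruelle's singular-value decomposition scheme from \cite{R2}, with the role of the inner-product determinant played by the induced volume of Definition \ref{defn:indVol} and the determinant \eqref{eq:detDefinition}. First I would establish the Banach-space analogue of singular values: for $A \in L(\Bc)$ set $s_q(A) = V_q(A)/V_{q-1}(A)$ (with $s_1(A) = V_1(A) = |A|$), which one checks are comparable to the Gelfand numbers of $A$, so that in particular $s_q(A) \to |A|_\a$ as $q \to \infty$. The subadditive ergodic theorem applied to $\log V_q(T^n_x)$ and to $\log |T^n_x|_\a$ — using the integrability hypothesis $\int \log^+|T_x|\,d\mu < \infty$ — yields the limits $l_q$ and $l_\a$ of part (1), constant on a full invariant set by ergodicity; the monotonicity $K_1 \geq K_2 \geq \cdots$ comes from log-concavity of $q \mapsto V_q$ (a consequence of Lemma \ref{lem:sampleVol} and a Fischer-type inequality for the induced volume), and $K_q \to l_\a$ from $s_q \to |\cdot|_\a$.

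The geometric heart is the convergence of the ``slow'' SVD subspaces. For each $n$, choose a $q$-dimensional subspace $E_q^{(n)}(x)$ nearly achieving $V_q(T^n_x)$, and more generally a filtration $E_1^{(n)} \subset E_2^{(n)} \subset \cdots$ of near-maximizers; complementarily, let $U_q^{(n)}(x)$ be a ``slow'' subspace on which $T^n_x$ contracts at rate roughly $\exp(n K_{q+1})$. The key estimate, proved exactly as in \cite{R2} but with volume ratios replacing orthogonal determinants, is that whenever $K_q > K_{q+1}$ the subspaces $U_q^{(n)}(x)$ form a Cauchy sequence in the appropriate gap metric on closed finite-codimension subspaces, at an exponential rate governed by the spectral gap $K_q - K_{q+1}$. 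This uses submultiplicativity of $V_q$, the comparability of induced volume with the geometry (Lemma \ref{lem:sampleVol}) to pass between determinant estimates and angle/distance estimates, and the cocycle relation $T^{n+m}_x = T^m_{f^n x} T^n_x$ to compare consecutive iterates. The limiting subspace is $F_{\l_i}(x)$ when $q = m_1 + \cdots + m_{i-1}$: the contraction estimate forces $F_{\l_i}(x) \supseteq \lim U_q^{(n)}(x)$, hence $\codim F_{\l_i}(x) \leq q$, while a lower bound on growth of vectors transverse to $U_q^{(n)}(x)$ — obtained from the near-maximality of the volume of $E_q^{(n)}$ and Lemma \ref{lem:sampleVol} — gives the reverse inclusion and the exact codimension in (2a), along with the uniform lower growth rate in (2b) for transverse vectors.

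For the volume growth statement (2c), given any complement $E$ to $F_{\l_i}(x)$ with $\dim E = q = m_1 + \cdots + m_i$, I would bound $\det(T^n_x|E)$ above by $V_q(T^n_x)$, whose growth rate is $l_q = m_1\l_1 + \cdots + m_i\l_i$, and below by combining: (i) that $E$ is uniformly transverse to $U_q^{(N)}(x)$ for large $N$ (since $E \cap F_{\l_{i+1}}(x) = \{0\}$ and $U_q^{(N)} \to F_{\l_{i+1}}(x) = \lim$), so $\det(T^N_x|E) \gtrsim \det(T^N_x | E_q^{(N)}(x)) \approx V_q(T^N_x)$ up to a $q$-dependent constant and a factor measuring the angle between $E$ and $U_q^{(N)}$, and (ii) the cocycle-multiplicativity $\det(T^{n}_x|E) = \det(T^{n-N}_{f^N x}| T^N_x E)\cdot \det(T^N_x|E)$ together with the fact that $T^N_x E$ is itself uniformly transverse to the slow subspaces at $f^N x$. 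Part (3), measurability of $x \mapsto F_{\l_i}(x)$, follows because each $U_q^{(n)}(x)$ can be selected measurably in $x$ (the measurability hypothesis \eqref{eq:measCondition} makes $T^n_x$ and the volume functionals measurable, and near-maximizers can be chosen measurably via a selection argument), and the exponentially fast convergence then exhibits $F_{\l_i}(x)$ as an a.e.-limit of measurable subspace-valued maps.

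The main obstacle I anticipate is the geometric comparison step: in a Hilbert space the SVD is orthogonal and angles, determinants, and norms are rigidly linked, whereas on a Banach space every such identity degrades to an inequality with a dimension-dependent constant (as foreshadowed by Lemma \ref{lem:sampleVol}). Propagating these constants through the Cauchy estimate for $U_q^{(n)}(x)$ without them accumulating along the $n$ iterations — so that one still gets a genuine exponential rate controlled purely by the spectral gap $K_q - K_{q+1}$ — is the delicate point, and is presumably where the preliminaries of Section 2 on the geometry of Banach spaces and the behavior of induced volumes under linear maps do the real work.
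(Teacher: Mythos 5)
Your high-level plan is the right one — Ruelle's scheme, with induced volumes standing in for Hilbert-space determinants, Kingman's theorem to produce the growth rates, and a Cauchy argument for the slow subspaces — and parts (1), (2c) and (3) are broadly in line with the paper's arguments (the paper proves the monotonicity of $\{K_q\}$ via the Gelfand-number comparison in Lemma~\ref{lem:gelfandProps} rather than a direct Fischer inequality, and proves measurability via Hausdorff convergence of approximate sublevel sets rather than measurable selection, but these are cosmetic differences). There is, however, a genuine gap at the heart of the geometric step.

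You propose to prove, for \emph{every} index $q$ at which $K_q > K_{q+1}$, a Cauchy estimate for the ``slow'' subspaces $U_q^{(n)}(x)$ with rate $K_{q+1} - K_q$, ``exactly as in \cite{R2}''. This does not go through in the Banach setting, and the failure is structural, not a matter of constant-tracking. The Cauchy estimate needs a \emph{lower} bound on how much $T^n_x$ expands vectors in the near-maximizing $q$-dimensional subspace $E_q^{(n)}$, at exponential rate $K_q$. In Hilbert space the exact SVD hands you this: the least singular value on $E_q^{(n)}$ is $s_q(T^n)$. In Banach space there is no SVD; what one can prove from Corollary~\ref{cor:minExpand} is only $\min_{v \in E_q^{(n)}}|T^n_x v|/|v| \gtrsim V_q(T^n_x)/|T^n_x|^{q-1}$, whose exponential rate is $l_q - (q-1)K_1 = K_q - \sum_{i<q}(K_1 - K_i)$. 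This equals $K_q$ only when $K_1 = \cdots = K_q$, i.e.\ only for the top block of equal exponents; for any gap below the first, the bound is strictly worse than $K_q$ (and can be much worse), so the claimed Cauchy rate $K_{q+1}-K_q$ is not obtained and the sequence $U_q^{(n)}$ need not converge from this estimate. The ``almost-orthonormal SVD basis'' of Proposition~\ref{prop:approxSVD} does not rescue this directly either, because there is no reason the first $q-1$ vectors of an almost-SVD basis for $E_q^{(n)}$ form an almost-SVD basis for their own $(q-1)$-dimensional span — the nested filtration $E_1^{(n)} \subset E_2^{(n)} \subset \cdots$ you invoke is free in Hilbert space but is not available with uniform control here.

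The paper's fix is an induction on the exponents that you do not mention and that cannot be skipped: Proposition~\ref{prop:staticMET} is proved \emph{only} for the top block $q = m_1$ (where $K_1 = \cdots = K_{m_1}$, so the crude lower bound is sharp), producing the codimension-$m_1$ subspace $F_2(x)$. Then the Exponent Extraction Lemma~\ref{lem:lyapInduct} shows $\lim_n \frac1n \log V_q(T^n_x|_{F_2(x)}) = l_{q+m_1} - l_{m_1}$, so that the restricted cocycle has $\lambda_2$ as its \emph{top} exponent with multiplicity $m_2$, and Proposition~\ref{prop:staticMET} applies again on $F_2(x)$. Iterating peels off one Lyapunov exponent at a time, always in the regime where the crude expansion bound is sharp. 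Proving Lemma~\ref{lem:lyapInduct} itself requires a further ingredient you would also need: Lemma~\ref{lem:compAngle}, controlling the angle $|\pi_{T^n_x E \ds T^n_x F_2(x)}|$ so that block-determinant estimates relate $V_q$ of the restriction to $V_{q+m_1}$ of the full cocycle. Without the induction (or a genuinely new argument producing a $K_q$-rate lower expansion bound on near-maximizers for all $q$), the codimension claim in (2a) and the growth dichotomy in (2b) for $i \geq 2$ are not established.
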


\begin{rmk}
The measurability assumption on the cocycle in \cite{M, T} is called $\mu$-\emph{continuity}, and is stronger than \eqref{eq:measCondition}: $\mu$-continuity requires the base space $X$ to be a compact topological space with Borel $\sigma$-algebra $\Fc$, the base map $f$ to be continuous, and the existence of a sequence of disjoint compact sets $\{K_n\}_n$ in $X$ with $\mu \left( \cup_n K_n \right) = 1$ for which $T|_{K_n}$ is norm continuous $K_n \to L(\Bc)$. 

In \cite{LL}, the Banach space $\Bc$ is assumed separable, and the measurability condition in that paper is \emph{strong measurability}, which requires that the evaluation maps $x \mapsto T_x v$ for $v \in \Bc$ be measurable as maps $(X, \Fc) \to (\Bc, \operatorname{Bor}(\Bc))$ (Definition 3.5.4 in \cite{Hille}), where $\operatorname{Bor}(\Bc)$ refers to the Borel sigma-algebra for the norm topology on $\Bc$; the same measurability hypothesis is used in \cite{gonzalez2014concise}. The condition \eqref{eq:measCondition} implies strong measurability (see Corollary 1 to Theorem 3.5.3 in \cite{Hille}).
\end{rmk}

\begin{rmk}
Item (3) in Theorem \ref{thm:volMET} is formulated precisely and proved in the Appendix as Lemma \ref{lem:measurabilitySubspace}. We note that this measurability is entirely independent of the arguments proving items (1), (2); however, we include it for the sake of completeness.
\end{rmk}



\subsection{Notation}
We collect here the various notations used in the paper. 

\begin{enumerate}

\item We denote by $(X, \Fc, \mu, f)$ an ergodic m.p.t.; we do not assume that $f$ is invertible.

\item For $a,b \in \R$, $a \vee b = \max\{a, b\}$.

\item Let $q \in \N$. For $a,b \in [0,\infty)$, we will write $a \lesssim b$ when $a \leq C_q b$, and $a \approx b$ if $a \lesssim b$ and $b \lesssim a$. It will be clear from context which constant $\lesssim, \approx$ depend on.

\item For normed vector spaces $V_1, V_2$, we denote by $L(V_1, V_2)$ the space of bounded linear operators $V_1 \to V_2$. 

\item $\Bc$ will always refer to an infinite dimensional Banach space with norm $|\cdot|$, and we always refer to the induced operator norm on $L(\Bc) := L(\Bc, \Bc)$ by the same symbol $|\cdot|$.

\item For a subspace $E \subset \Bc$, we let $B_E := \{v \in E : |v| \leq 1\}$ denote the closed unit ball and $S_E := \{v \in E : |v| = 1\}$ denote the unit sphere. 
\item For vectors $v_1, \cdots, v_m \in \Bc$, we denote by $\langle v_i : 1 \leq i \leq m \rangle$ or $\langle v_1, \cdots, v_m \rangle $ the subspace of $\Bc$ spanned by $v_1, \cdots, v_m$.  We denote the parallelepiped spanned by $v_1, \cdots, v_m$ by $P[v_1, \cdots, v_m] = \{c_1 v_1 + \cdots + c_m v_m : c_i \in [0,1]\}$.

\item Let $A \in L(\Bc)$ and let $E \subset \Bc$ be a subspace. We denote by $A|_E \in L(E, \Bc)$ the restriction of $A$ to $E$. 
\item When $\dim E < \infty$, we denote by $\det(A | E)$ the determinant of $A$ on $E$, as defined in \eqref{eq:detDefinition}.

\item For a subspace $F \subset \Bc$, we denote by $F^{\circ}$ the annihilator of $F$ in $\Bc^*$, i.e., 
\[F^{\circ} := \{l \in \Bc^* : l(f) = 0 \text{ for all } f \in F\}.\]
The codimension of a closed subspace $F \subset \Bc$ is defined by $\codim F = \dim F^{\circ}$.

\item For a vector space $V$, we write $V = E \oplus F$ when $V$ is the direct sum of $E$ and $F$, i.e., each vector in $V$ can be written as the unique sum of an element of $E$ and an element of $F$, and we call such a splitting of $V$ an \emph{algebraic splitting}; we write $\pi_{E \ds F}$ for the projection operator onto $E$ parallel to $F$. 

\item When $V$ is a normed vector space, $V = E \oplus F$ is an algebraic splitting, and $\pi_{E \ds F}$ is a bounded linear operator, we call $V = E \oplus F$ a \emph{topological splitting}. For a closed subspace $E \subset \Bc$, if $F \subset \Bc$ is a closed subspace for which $\Bc = E \oplus F$ is a topological splitting, we call $F$ a \emph{topological complement} to $E$.

\item $\Gc(\Bc)$ refers to the Grassmanian of closed subspaces of $\Bc$, and $\Gc_q(\Bc), \Gc^q(\Bc)$ refer to the subsets of $q$-dimensional and $q$-codimensional subspaces of $\Bc$, respectively. $d_H$ refers to the Hausdorff distance on $\Gc(\Bc)$, defined in \eqref{eq:hausDist}.

\item For $A \in L(\Bc)$ and $q \in \N$, $c_q(A)$ denotes the $q$-th Gelfand number of $A$, defined in \eqref{eq:gelfand}, and $c(A)$ is defined as the limit $\lim_{q \to \infty} c_q(A)$. We denote by $|A|_{\a}$ the measure of compactness of $A$, as in Definition \ref{defn:measNoncompactness}.

\end{enumerate}

\section{Induced Volumes and Banach Space Geometry}

In this section, we collect some results on the geometry of Banach spaces and discuss the geometric properties of the induced volumes defined in Section 1.

For our purposes, there are two echelons for the geometry for Banach spaces: the `local' geometry of finite dimensional subspaces and the induced volumes on them, and the `global' geometry of the Banach space at large.
 

\subsection{Global Geometry of Banach Spaces}

We begin by defining a concept of `angle' in Banach space, and showing its relation to the projection maps associated to splittings. Next, we recall some facts about the geometry of the Grassmanian of closed subspaces of $\Bc$. Finally, we recall the definition and some basic properties of the measure of noncompactness $|\cdot|_{\a}$.

\subsubsection{Angles, Splittings and Complements}
Key to volume-growth based approaches to the MET is a concept of angle between vectors and subspaces. The following notion of angle is well-suited to our purposes. Definition \ref{defn:angle} is adapted from Section II of Ma{\~n}{\'e}'s paper \cite{M}, and is also very similar to the minimal angle defined in Section 5 of \cite{Berkson}.


\begin{defn}\label{defn:angle}
Let $E, F \subset \Bc$ be subspaces. The \emph{minimal angle} $\theta(E, F) \in [0,\frac{\pi}{2}]$ from $E$ to $F$ is defined by
\[
\sin \theta (E, F) = \inf\{|e - f | : e \in E, |e| = 1, f \in F\}.
\]
\end{defn}
Roughly speaking, the minimal angle $\theta(E, F)$ will be small whenever $E$ is inclined towards $F$. Note that $\theta(E, F)$ may not equal $\theta(F, E)$. The minimal angle has an interpretation in terms of splittings of Banach space.

\begin{lem} \label{lem:angleForm}
Let $\Bc = E \oplus F$ be an algebraic splitting, and assume $E \neq \{0\}$. Then, $\Bc = E \oplus F$ is topological if and only if $\theta(E, F) > 0$, in which case we have
\[
\sin \theta(E, F) = |\pi_{E \ds F}|^{-1},
\]
where $\pi_{E \ds F}$ is the projection to $E$ parallel to $F$.
\end{lem}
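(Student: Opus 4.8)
The plan is to prove the two directions of the equivalence separately, with the quantitative identity $\sin\theta(E,F) = |\pi_{E\ds F}|^{-1}$ emerging naturally from the argument for the nontrivial direction.

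First I would establish the inequality $\sin\theta(E,F) \geq |\pi_{E\ds F}|^{-1}$ under the hypothesis that $\pi := \pi_{E\ds F}$ is bounded (this covers the ``only if'' of topological $\Rightarrow$ $\theta>0$ as well as half of the identity). Take any $e \in E$ with $|e| = 1$ and any $f \in F$. Since $\pi$ fixes $E$ pointwise and kills $F$, we have $\pi(e - f) = e$, hence $1 = |e| = |\pi(e-f)| \leq |\pi|\,|e - f|$, so $|e - f| \geq |\pi|^{-1}$. Taking the infimum over such $e, f$ gives $\sin\theta(E,F) \geq |\pi|^{-1} > 0$.

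Next I would prove the reverse inequality $\sin\theta(E,F) \leq |\pi_{E\ds F}|^{-1}$, i.e. $|\pi_{E\ds F}| \leq 1/\sin\theta(E,F)$, assuming $\theta(E,F) > 0$; this simultaneously shows $\pi$ is bounded, giving ``$\theta>0 \Rightarrow$ topological'' and closing the identity. Let $v \in \Bc$ be arbitrary and write $v = e + f$ with $e \in E$, $f \in F$. If $e = 0$ then $\pi v = 0$ and there is nothing to check, so assume $e \neq 0$. Then $|v| = |e + f| = |e|\cdot \big| \tfrac{e}{|e|} - \big(-\tfrac{f}{|e|}\big)\big| \geq |e| \cdot \sin\theta(E,F)$, since $-f/|e| \in F$ and $e/|e|$ is a unit vector of $E$. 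Therefore $|\pi v| = |e| \leq |v|/\sin\theta(E,F)$, which shows $\pi$ is bounded with $|\pi| \leq 1/\sin\theta(E,F)$. Combined with the previous paragraph this yields $\sin\theta(E,F) = |\pi_{E\ds F}|^{-1}$ and the equivalence, once one notes that $\pi$ bounded is by definition the statement that the splitting is topological.

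The only genuine subtlety — and the step I would be most careful about — is the degenerate case $E = \{0\}$, which is why the hypothesis $E \neq \{0\}$ is imposed: if $E = \{0\}$ then $\theta(E,F)$ is an infimum over an empty set (no unit vectors in $E$), so the definition degenerates. For $E \neq \{0\}$ one also wants $\theta(E,F) < \pi/2$ to not cause issues, but the argument above never needs that. A second point worth a sentence is that in the reverse direction we have implicitly used that $F$ is a genuine complement (so every $v$ decomposes), which is part of the ``algebraic splitting'' hypothesis; no closedness of $E$ or $F$ is needed for the identity itself, only for the later interpretation of ``topological complement.'' I expect no real obstacle here — the lemma is a clean unwinding of the definitions — so the writeup is essentially the two displayed estimates above plus the remark on $E = \{0\}$.
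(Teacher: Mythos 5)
Your proof is correct and is essentially the same computation as the paper's: the paper writes it as a single chain of equalities $|\pi_{E\ds F}| = \sup_{e,f}\,|e|/|e+f| = 1/\inf_{|e|=1,f}|e+f| = 1/\sin\theta(E,F)$ (interpreted as $\infty$ when $\theta = 0$), whereas you split this into the two one-sided inequalities, but the normalization $e \mapsto e/|e|$ and the identity $\pi(e-f)=e$ that drive both directions are exactly the same steps.
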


\begin{proof}
Let $P = \pi_{E \ds F}$, which is well-defined and possibly unbounded when $\Bc = E \oplus F$ is not topological. One computes
\begin{align*}
|P| &= \sup_{e \in E, f \in F, e + f \neq 0} \frac{|e|}{|e + f|} = \sup_{e \in E, |e| = 1, f \in F} \frac{1}{|e + f|} = \frac{1}{\inf\{|e - f | : e \in E, |e| = 1, f \in F\}} \\
& = \frac{1}{\sin \theta(E, F)},
\end{align*}
where we interpret the RHS of this formula as $\infty$ whenever $\theta(E, F) = 0$.
\end{proof}



It is crucial to also have access to complements for subspaces of finite dimension and codimension, as there is no assignment of an `orthogonal complement' to closed subspaces of a Banach space.



\begin{lem}[III.B.10 and III.B.11 of \cite{Wo}] \label{lem:compExist}
Fix $q \in \N$.

For any subspace $E \subset \Bc$ with $\dim E = q$, there exists a topological complement $F$ for $E$ such that $\sin \theta(E, F) \geq \frac{1}{\sqrt{q}}$, i.e., $|\pi_{E \ds F}| \leq \sqrt{q}$.

For any closed subspace $F \subset \Bc$ with $\codim F = q$, there exists a topological complement $E$ for $F$ such that $\sin \theta(F, E) \geq \frac{1}{\sqrt{q} + 2}$, i.e., $|\pi_{F \ds E}| \leq \sqrt{q} + 2$.
\end{lem}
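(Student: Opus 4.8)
The plan is to reduce both assertions to the Kadec--Snobar theorem: every $q$-dimensional subspace of a Banach space is the range of a projection of norm at most $\sqrt q$. In each case Lemma \ref{lem:angleForm} is what converts a projection-norm bound into a minimal-angle bound, since $\sin\theta(E,F) = |\pi_{E \ds F}|^{-1}$ for a topological splitting $\Bc = E \oplus F$.

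For the first assertion I would first (re)prove Kadec--Snobar for the given $q$-dimensional $E$ using John's theorem. Let $\mathcal{E} \subset B_E$ be the maximal-volume ellipsoid and identify the inner product it determines with the standard one, so that $|\cdot| \le \|\cdot\|_2 \le \sqrt q\,|\cdot|$ on $E$. John's theorem also provides contact points $u_1,\dots,u_m \in S_E$ with $\|u_i\|_2 = 1$ and weights $c_i > 0$ with $\sum_i c_i = q$ and $\sum_i c_i \langle \cdot, u_i\rangle u_i = \Id_E$; the supporting functional of $B_E$ at $u_i$ restricts on $E$ to $\langle\cdot,u_i\rangle$, so extends by Hahn--Banach to $g_i \in \Bc^*$ with $|g_i| = 1$. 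Then $Px := \sum_i c_i g_i(x) u_i$ is a projection of $\Bc$ onto $E$, and a Cauchy--Schwarz estimate against the John inner product gives $|Px| \le \|Px\|_2 \le \sqrt q\,|x|$, using only $\sum_i c_i = q$ and $|g_i| = 1$. Taking $F := \ker P$ gives a topological complement with $|\pi_{E \ds F}| = |P| \le \sqrt q$, hence $\sin\theta(E,F) \ge 1/\sqrt q$ by Lemma \ref{lem:angleForm}.

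For the second assertion I would dualize and then descend. Since $\dim F^\circ = q$, Kadec--Snobar gives a projection $R : \Bc^* \to F^\circ$ with $|R| \le \sqrt q$; set $N := \ker R$. Its adjoint $R^* : \Bc^{**} \to \Bc^{**}$ is a projection with $q$-dimensional range $N^\perp$, kernel $(F^\circ)^\perp = F^{\perp\perp}$, and $|R^*| = |R| \le \sqrt q$. To push this down to $\Bc$, apply the Principle of Local Reflexivity to the finite-dimensional spaces $N^\perp \subset \Bc^{**}$ and $F^\circ \subset \Bc^*$: there is an injective $u : N^\perp \to \Bc$ with $|u| \le 1 + \e$ and $\phi(u\xi) = \xi(\phi)$ for all $\xi \in N^\perp$, $\phi \in F^\circ$. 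Define $P := u \circ R^* \circ \iota$, where $\iota : \Bc \hookrightarrow \Bc^{**}$ is the canonical embedding. The compatibility condition on $u$ is precisely what makes $P$ idempotent; since $\iota^{-1}(F^{\perp\perp}) = F$ one gets $\ker P = F$, and since $\iota(\Bc)$ surjects onto $\Bc^{**}/F^{\perp\perp}$ one gets $\operatorname{ran} P = u(N^\perp)$, a $q$-dimensional complement $E$ of $F$ with $|\pi_{E \ds F}| = |P| \le (1+\e)\sqrt q$. Therefore $|\pi_{F \ds E}| = |\Id - P| \le 1 + (1+\e)\sqrt q \le \sqrt q + 2$ for $\e$ small, and Lemma \ref{lem:angleForm} gives $\sin\theta(F,E) \ge 1/(\sqrt q + 2)$.

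The step I expect to be most delicate is the descent in the second assertion: one needs the operator produced by Local Reflexivity to respect the pairing with $F^\circ$ \emph{exactly}, so that $\ker P$ is genuinely $F$ and not merely a subspace close to it --- a crude $\e$-perturbation will not do. The other point worth care, in the first assertion, is that the full tight-frame conclusion of John's theorem is needed: a bare Auerbach basis of $E$ would only yield a projection of norm $q$, not $\sqrt q$. Since the statement is quoted verbatim from \cite{Wo}, I would ultimately defer to that reference for the routine verifications; the outline above is how I would reconstruct the argument.
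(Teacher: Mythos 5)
The paper gives no proof of this lemma --- it is cited verbatim from III.B.10 and III.B.11 of Wojtaszczyk --- so there is no in-text argument to compare against. Your reconstruction (Kadec--Snobar via the John decomposition of the identity for the $q$-dimensional case; dualization plus the exact-pairing form of local reflexivity for the $q$-codimensional case) is correct and is essentially the argument in the cited reference, and you correctly flag the two steps that cannot be softened: the exact, rather than approximate, pairing with $F^{\circ}$ is what forces $\ker P = F$, and the full John tight-frame decomposition (not a mere Auerbach basis) is what gives $\sqrt{q}$ rather than $q$. One small point worth making explicit: the $\epsilon$ in the local-reflexivity step must be taken $\leq 1/\sqrt{q}$ so that $1 + (1+\epsilon)\sqrt{q} \leq \sqrt{q} + 2$, which is fine since $q$ is fixed throughout the lemma.
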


We will also need the following lemma regarding complements.
%

\begin{lem}\label{lem:splittingExist}
Let $A \in L(\Bc)$ and $E_1 \subset \Bc$ be a finite dimensional subspace for which $A|_{E_1}$ is injective. If $F_2$ is a topological complement to $E_2 := A E_1$, then $F_1 = \{v \in \Bc : A v \in F_2\}$ is a $\dim E_1$-codimensional subspace complementing $E_1$. Moreover,
\begin{align}\label{eq:projectionForm}
\pi_{E_1 \ds F_1} = (A|_{E_1})^{-1} \circ \pi_{E_2 \ds F_2} \circ A
\end{align}

\end{lem}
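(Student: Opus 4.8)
The plan is to verify the three assertions of Lemma~\ref{lem:splittingExist} in order: first that $\Bc = E_1 \oplus F_1$ is an algebraic splitting, then that it is topological (equivalently $\codim F_1 = \dim E_1$ and $F_1$ is closed), and finally the projection formula \eqref{eq:projectionForm}, from which the topological claim actually follows most cleanly.

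First I would check the algebraic direct sum. If $v \in E_1 \cap F_1$, then $v \in E_1$ and $Av \in F_2$; but $v \in E_1$ gives $Av \in E_2 = AE_1$, so $Av \in E_2 \cap F_2 = \{0\}$, and since $A|_{E_1}$ is injective this forces $v = 0$. For the spanning property, take any $w \in \Bc$ and write $Aw = e_2 + f_2$ with $e_2 \in E_2$, $f_2 \in F_2$ using the splitting $\Bc = E_2 \oplus F_2$. Set $e_1 = (A|_{E_1})^{-1} e_2 \in E_1$, which makes sense since $e_2 \in E_2 = AE_1$. Then $A(w - e_1) = Aw - e_2 = f_2 \in F_2$, so $w - e_1 \in F_1$, giving $w = e_1 + (w - e_1) \in E_1 + F_1$. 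Hence $\Bc = E_1 \oplus F_1$ algebraically, and this same computation identifies the projection onto $E_1$ parallel to $F_1$: it sends $w \mapsto e_1 = (A|_{E_1})^{-1} e_2 = (A|_{E_1})^{-1} \pi_{E_2 \ds F_2}(Aw)$, which is exactly \eqref{eq:projectionForm}.

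With formula \eqref{eq:projectionForm} in hand, the topological statement is immediate: $(A|_{E_1})^{-1} : E_2 \to E_1$ is bounded since $E_2$ is finite dimensional (indeed $\dim E_2 = \dim E_1$ because $A|_{E_1}$ is injective), $\pi_{E_2 \ds F_2}$ is bounded by hypothesis (the splitting $\Bc = E_2 \oplus F_2$ is topological), and $A$ is bounded; so $\pi_{E_1 \ds F_1}$ is a composition of bounded operators and is bounded. By Lemma~\ref{lem:angleForm} this gives $\theta(E_1, F_1) > 0$ and the splitting is topological, so in particular $F_1$ is closed (it is the kernel of the bounded projection $\pi_{E_1 \ds F_1}$, or equivalently the preimage $A^{-1}(F_2)$ of the closed set $F_2$ under the continuous map $A$) and $\codim F_1 = \dim E_1 = q$.

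I expect essentially no serious obstacle here; the only point requiring a moment's care is making sure the candidate decomposition $w = e_1 + (w-e_1)$ is genuinely the unique one and that it produces the asserted formula rather than merely some projection — but uniqueness is guaranteed once we know the sum is direct, and the formula then reads off directly from the construction of $e_1$. One should also note explicitly that the lemma does not assume $A$ is injective on all of $\Bc$, only on $E_1$, and indeed injectivity on $E_1$ is used in exactly two places: to conclude $E_1 \cap F_1 = \{0\}$ and to guarantee that $(A|_{E_1})^{-1}$ exists as a (necessarily bounded) linear map on the finite-dimensional space $E_2$.
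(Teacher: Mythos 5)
Your proof is correct and takes essentially the same approach as the paper, both centering on the operator $(A|_{E_1})^{-1} \circ \pi_{E_2 \ds F_2} \circ A$. The only difference is organizational: you verify the algebraic direct sum $\Bc = E_1 \oplus F_1$ by hand and then read off the projection formula from the construction, whereas the paper begins by defining that operator, observes it is a bounded projection onto $E_1$ (since it restricts to the identity on $E_1$), and then identifies its kernel with $F_1$.
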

\begin{proof}
Let $P = (A|_{E_1})^{-1} \circ \pi_{E_2 \ds F_2} \circ A$, which is a bounded linear operator on $\Bc$ with range $E_1 \subset \Bc$. One computes that $P|_{E_1}$ is the identity on $E_1$, and so $P$ is a bounded projection operator.  Define $\tilde{F} = \ker P$, which is a topological complement to $E_1$. With $F_1 := \{v \in \Bc : A v \in F_2\}$, we will show that $\tilde{F} = F_1$, as desired. 

First, let $f_1 \in F_1$. The definition of $P$ implies $A \circ P f_1 = \pi_{E_2 \ds F_2} \circ  A f_1$, but $A f_1 \in F_2 = \ker \pi_{E_2 \ds F_2}$ by assumption, and so $A \circ P f_1 = 0$. But we assumed that $A|_{E_1}$ is injective, and so $P f_1 = 0$; so, we have deduced that $\tilde{F} \supset F_1$. For the other inclusion, if $f \in \tilde{F}$, then $P f = 0$ implies $A f \in \ker \pi_{E_2 \ds F_2} = F_2$, and so $f \in \{v \in \Bc : A v \in F_2\} = F_1$ follows, hence $ \tilde{F} \subset F_1$.
\end{proof}

\subsubsection{The Grassmanian on Banach Space}

We denote by $\Gc(\Bc)$ the space of closed subspaces of $\Bc$, endowed with the Hausdorff distance of unit spheres: for $U, V \in \Gc(\Bc)$, we set
\begin{align}\label{eq:hausDist}
d_H(U, V) = d_H(S_U, S_V),
\end{align}
where $S_U = \{u \in U : |u| = 1\}$, and the Hausdorff distance between closed subsets $A, B$ of a metric space is defined by
\begin{align*}
d_H(A, B) = \max\{\sup_{a \in A} d(a, B), \sup_{b \in B} d(b, A) \},
\end{align*}
where $d(a, B) := \inf\{d(a,b) : b \in B\}$. We define $\Gc_q(\Bc) = \{E \in \Gc(\Bc) : \dim E = q\}$ and $\Gc^q(\Bc) = \{F \in \Gc(\Bc) : \codim F = q\}$. The following are some basic properties of the Hausdorff distance $d_H$.

\begin{prop}[Chapter IV, \S2.1 of \cite{K}] \label{prop:grassProps}
The metric space $(\Gc(\Bc), d_H)$ has the following properties.
\begin{enumerate}
\item $(\Gc(\Bc), d_H)$ is complete. 
\item The subsets $\Gc_q(\Bc), \Gc^q(\Bc)$ are closed in $(\Gc(\Bc), d_H)$.
\end{enumerate}
\end{prop}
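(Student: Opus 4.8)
Throughout, I would replace the Hausdorff distance $d_H$ on $\Gc(\Bc)$ by the equivalent \emph{gap} $\hat\delta(U,V) = \max\{\delta(U,V), \delta(V,U)\}$, where $\delta(U,V) = \sup_{u \in S_U} \dist(u, V)$: since $\dist(u, V) \leq \dist(u, S_V) \leq 2\,\dist(u,V)$ for every $u \in S_U$, one has $\hat\delta \leq d_H \leq 2\hat\delta$, so $(\Gc(\Bc), d_H)$ is complete (resp.\ has a given subset closed) iff the same holds for $\hat\delta$. I will use repeatedly the following elementary perturbation fact: if unit vectors $v_1, \dots, v_k \in \Bc$ satisfy $\dist(v_i, \langle v_j : j \neq i\rangle) \geq c > 0$ for all $i$, and vectors $w_1, \dots, w_k$ satisfy $|v_i - w_i| < c/(2k)$, then $w_1, \dots, w_k$ are linearly independent; this is immediate by expanding a hypothetical linear dependence and isolating the coefficient of largest modulus. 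I will also freely use that every $q$-dimensional subspace carries an Auerbach basis, i.e.\ unit vectors $v_1, \dots, v_q$ with $\dist(v_i, \langle v_j : j\neq i\rangle) \geq 1$, so the separation constant can be taken to be $1$ uniformly over all $q$-dimensional subspaces.

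\emph{Completeness.} Given a $\hat\delta$-Cauchy sequence $\{E_n\}$, I would define the candidate limit
\[
E := \{x \in \Bc : \dist(x, E_n) \to 0 \text{ as } n \to \infty\},
\]
and check that $E$ is a closed linear subspace --- linearity because each $E_n$ is, closedness by a short triangle-inequality argument. The content is that $\hat\delta(E_n, E) \to 0$. After passing to a subsequence with $\hat\delta(E_{n_j}, E_{n_{j+1}}) < 2^{-j}$, for each $k$ and each $x \in S_{E_{n_k}}$ I would build a telescoping sequence $x = y_k, y_{k+1}, \dots$ with $y_j \in E_{n_j}$ and $|y_j - y_{j+1}|$ summable (using $\dist(y_j, E_{n_{j+1}}) \leq |y_j|\,2^{-j}$ and that the norms $|y_j|$ stay bounded); its limit $y_\infty$ then lies in $E$ (because $\dist(y_\infty, E_m)$ is controlled by $|y_\infty - y_j|$ plus the original Cauchy tail) and satisfies $|x - y_\infty| = O(2^{-k})$, giving $\delta(E_{n_k}, E) = O(2^{-k})$. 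Combining this with $\delta(E_n, E) \lesssim \hat\delta(E_n, E_{n_k}) + \delta(E_{n_k}, E)$ and the Cauchy property upgrades the estimate to $\delta(E_n, E) \to 0$ along the full sequence. For the reverse bound $\delta(E, E_n) \to 0$: fix $\e > 0$; for $x \in S_E$ pick $k$ (depending on $x$) with $\dist(x, E_k) < \e$, and then $\dist(x, E_n) \leq \dist(x, E_k) + (1+\dist(x,E_k))\,\delta(E_k, E_n) < 3\e$ for all $n, k$ beyond a threshold depending only on the Cauchy modulus, not on $x$ --- hence uniformly over $S_E$. Since a Cauchy sequence converges as soon as a subsequence does, $(\Gc(\Bc), \hat\delta)$ is complete.

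\emph{Closedness of $\Gc_q$ and $\Gc^q$.} Suppose $E_n \to E$ with $\dim E_n = q$; by completeness $E \in \Gc(\Bc)$. If $\dim E \geq q+1$, choose unit vectors spanning a $(q{+}1)$-dimensional subspace of $E$ with positive separation, approximate each by an element of $E_n$ using $\delta(E, E_n) \to 0$, and obtain $q+1$ independent vectors in $E_n$ --- a contradiction; so $\dim E \leq q$. If $\dim E \leq q-1$, apply the same reasoning to an Auerbach basis of $E_n$ (separation $\geq 1$, uniformly in $n$), approximated in $E$ via $\delta(E_n, E) \to 0$, to force $q$ independent vectors in $E$ --- a contradiction; so $\dim E = q$ and $\Gc_q(\Bc)$ is closed. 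Now suppose $F_n \to F$ with $\codim F_n = q$; again $F \in \Gc(\Bc)$. By Lemma \ref{lem:compExist} each $F_n$ has a $q$-dimensional topological complement $E_n$ with $|\pi_{F_n \ds E_n}| \leq \sqrt q + 2$; set $P_n = \pi_{E_n \ds F_n}$, so $|P_n| \leq \sqrt q + 3$. I claim $\Bc = E_n \oplus F$ for all large $n$. First, $E_n \cap F = \{0\}$: a unit $v \in E_n \cap F$ would, after approximation by some $w \in F_n$ (via $\delta(F, F_n) \to 0$) and applying $P_n$ --- which fixes $v$ and annihilates $F_n$ --- satisfy $1 = |v| = |P_n(v-w)| \lesssim \hat\delta(F, F_n) \to 0$, impossible. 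Second, $E_n + F = \Bc$: otherwise $E_n + F$ is a proper closed subspace, so by Hahn--Banach there is $l \in \Bc^*$ with $|l| = 1$ vanishing on $E_n + F$; for any $v$, writing $v = P_n v + (I-P_n)v$ with $(I-P_n)v \in F_n$ and approximating the latter by an element of $F$ (via $\delta(F_n, F) \to 0$) gives $|l(v)| = |l((I-P_n)v)| \lesssim \hat\delta(F_n, F)\,|v|$, so $|l| \to 0$, impossible. Hence $\Bc / F$ is linearly isomorphic to $E_n$, so $\codim F = \dim E_n = q$, and $\Gc^q(\Bc)$ is closed.

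I expect the convergence half of the completeness statement to be the main obstacle: since $S_E$ is not compact in the infinite-dimensional setting, neither one-sided gap estimate can be obtained by extracting limits of arbitrary unit-vector sequences, so both must be produced ``by hand'' via the telescoping construction, with careful bookkeeping of sphere-versus-subspace distances and of the passage between the chosen subsequence and the full Cauchy sequence. The closedness of $\Gc^q$ is delicate for the same reason --- no compactness --- but there the existence of complements with a dimension-only bound on the projection norm (Lemma \ref{lem:compExist}) does the essential work.
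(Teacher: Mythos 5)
The paper cites Proposition~\ref{prop:grassProps} directly from Kato and supplies no proof of its own, so there is no internal argument to compare against; the only question is whether your argument is correct, and it is. The completeness proof --- defining the candidate limit $E = \{x \in \Bc : \dist(x, E_n) \to 0\}$, passing to a rapidly-Cauchy subsequence, telescoping to obtain $\delta(E_{n_k}, E) = O(2^{-k})$, upgrading to the full sequence, and handling the reverse bound by choosing for each $x \in S_E$ an index $k \geq N$ with $N$ uniform in $x$ --- is the standard Kato-style argument, and you correctly replace the compactness of spheres (unavailable here) with explicit constructions. The perturbation lemma combined with Auerbach bases treats the closedness of $\Gc_q(\Bc)$ cleanly. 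For $\Gc^q(\Bc)$ your constructive route via Lemma~\ref{lem:compExist} is valid, but note that the duality identity in Item~2 of Lemma~\ref{lem:apertureProps}, $\delta(U,V) = \delta(V^\circ, U^\circ)$, gives a shorter path already at hand: $F_n \to F$ in $\Gc(\Bc)$ implies $F_n^\circ \to F^\circ$ in $\Gc(\Bc^*)$, and the $\Gc_q$ case applied to $\Bc^*$ then forces $\dim F^\circ = q$ at once. Your Hahn--Banach argument is in effect reproving this by hand. One assertion deserves justification: when invoking Hahn--Banach you need $E_n + F$ to be a \emph{closed} proper subspace. This does hold --- $E_n$ is finite-dimensional and $F$ is closed, so $E_n + F$ is the preimage under the quotient map $\Bc \to \Bc/F$ of the finite-dimensional (hence closed) image of $E_n$ --- but as written it is claimed without proof.
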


It is actually somewhat inconvenient to compute directly with $d_H$, and so frequently it will be easier to use the \emph{gap}, defined for $E, E' \in \Gc(\Bc)$ by
\[
\d(E, E') := \sup_{e \in E, |e| = 1} d(e, E').
\]
\begin{lem} \label{lem:apertureProps}
Let $U, V \in \Gc(\Bc)$.
\begin{enumerate}
\item $\d(U, V) \vee \d(V, U) \leq d_H(U, V) \leq 2 (\d(U, V) \vee \d(V, U))$.
\item $\d(U, V) = \d(V^{\circ}, U^{\circ})$.
\item Let $q \in \N$, and assume either that $U, V \in \Gc_q(\Bc)$ or that $U, V \in \Gc^q(\Bc)$. If $\d(U, V) < \frac{1}{q}$, then
\begin{align}\label{eq:gapEstimate}
\d(V, U) \leq \frac{q \d(U, V)}{1 - q \d(U, V)}.
\end{align}
\end{enumerate}
\end{lem}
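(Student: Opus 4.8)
\emph{Plan for part (1).} The left inequality is immediate: since $S_V \subset V$, for each $e \in S_U$ we have $d(e,V) \le d(e,S_V) \le d_H(U,V)$, so $\d(U,V) \le d_H(U,V)$, and by symmetry $\d(V,U) \le d_H(U,V)$. For the right inequality, set $M = \d(U,V) \vee \d(V,U)$; if $M \ge 1$ there is nothing to prove since $|e-v| \le 2$ for all $e \in S_U$, $v \in S_V$. If $M < 1$, fix $e \in S_U$ and a small $\e > 0$, choose $v' \in V$ with $|e-v'| < d(e,V) + \e \le M + \e < 1$ (so $v' \neq 0$), and normalize to $v = v'/|v'| \in S_V$. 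The point is the elementary bound $|v' - v| = \big| |v'| - 1 \big| \le \big| |v'| - |e| \big| \le |v'-e|$, which gives $|e-v| < 2(M+\e)$; taking the supremum over $e$, letting $\e \to 0$, and arguing symmetrically yields $d_H(U,V) \le 2M$.

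\emph{Plan for part (2).} I would express both gaps as the same double supremum over the product of unit balls of $U$ and $V^{\circ}$, using two standard Hahn--Banach facts. First, for a closed subspace $W \subset \Bc$ and $w \in \Bc$, $d(w,W) = \sup\{|l(w)| : l \in W^{\circ},\ |l| \le 1\}$; applying this with $W = V$ gives $\d(U,V) = \sup\{|l(e)| : e \in U,\ |e| \le 1,\ l \in V^{\circ},\ |l| \le 1\}$ (the passage from the unit sphere of $U$ to its unit ball is positive homogeneity). Second, the restriction map is an isometric isomorphism $\Bc^{*}/W^{\circ} \cong W^{*}$, so for $l \in \Bc^{*}$ one has $d(l,W^{\circ}) = \|l|_{W}\| = \sup\{|l(w)| : w \in W,\ |w| \le 1\}$; applying this with $W = U$ gives $\d(V^{\circ},U^{\circ}) = \sup\{|l(e)| : e \in U,\ |e| \le 1,\ l \in V^{\circ},\ |l| \le 1\}$. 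The two right-hand sides coincide. The degenerate cases $U = \{0\}$ or $V = \Bc$ make both sides $0$ and are dispatched separately.

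\emph{Plan for part (3).} I would first reduce the $q$-codimensional case to the $q$-dimensional one via part (2): if $U,V$ are closed $q$-codimensional subspaces then $U^{\circ},V^{\circ}$ are $q$-dimensional subspaces of $\Bc^{*}$, and part (2) turns $\d(U,V) = \d(V^{\circ},U^{\circ})$ and $\d(V,U) = \d(U^{\circ},V^{\circ})$, so the finite-dimensional estimate applied in $\Bc^{*}$ finishes that case. For the $q$-dimensional case, write $\d = \d(U,V)$ and, using Auerbach's lemma together with Hahn--Banach, pick a basis $u_{1},\dots,u_{q}$ of $U$ with $|u_{i}| = 1$ and functionals $u_{i}^{*} \in \Bc^{*}$ with $|u_{i}^{*}| = 1$, $u_{i}^{*}(u_{j}) = \d_{ij}$. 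Since $V$ is closed and finite-dimensional, $d(u_{i},V) \le \d$ is attained, so there are $v_{i} \in V$ with $e_{i} := u_{i} - v_{i}$, $|e_{i}| \le \d$. The matrix $E = (u_{i}^{*}(e_{j}))_{i,j}$ has all entries of modulus $\le \d$, hence $\ell^{1} \to \ell^{1}$ operator norm (on $\R^{q}$) at most $q\d < 1$; thus $I - E = (u_{i}^{*}(v_{j}))_{i,j}$ is invertible, so the $v_{j}$ form a basis of $V$. For $v \in V$ with $|v| = 1$, expand $v = \sum_{j} c_{j} v_{j}$; applying $u_{i}^{*}$ gives $(I-E)c = b$ with $b_{i} = u_{i}^{*}(v)$ and $\|b\|_{1} \le q$, so $\|c\|_{1} \le \|b\|_{1}/(1 - q\d) \le q/(1 - q\d)$. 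Since $v - \sum_{j} c_{j} u_{j} = -\sum_{j} c_{j} e_{j}$ has norm $\le \d\|c\|_{1}$ and $\sum_{j} c_{j} u_{j} \in U$, we get $d(v,U) \le q\d/(1 - q\d)$, and taking the supremum over $v \in S_{V}$ gives the claim.

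The one genuinely substantive step is the $q$-dimensional estimate in part (3): the task is to build a near-identity correspondence between $U$ and $V$ with all constants depending only on $q$, and the role of the Auerbach basis is precisely to keep the biorthogonal functionals norm-controlled so that the Neumann series bound $\|(I-E)^{-1}\| \le (1-q\d)^{-1}$ goes through; parts (1) and (2) and the codimensional reduction are then routine.
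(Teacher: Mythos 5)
Your proposal is correct and follows the paper's outline, while supplying the details the paper delegates to Kato or leaves as a sketch. For parts (1) and (2) the paper simply cites Chapter IV of Kato, whereas you give self-contained proofs: the normalization argument in (1) and the Hahn--Banach double-supremum identity in (2) (with the degenerate cases $U = \{0\}$, $V = \Bc$ handled separately and the sphere-to-ball passage justified by homogeneity) are both sound. For part (3) you use exactly the paper's plan---reduce the codimensional case to the dimensional case via part (2), then compare $\sum_j c_j v_j$ with $\sum_j c_j u_j$ after choosing $v_i \in V$ near $u_i \in U$---but the paper's one-line sketch (``we can now compare\ldots'') suppresses the crucial point: with an arbitrary normalized basis of $U$ there is no control on $\|c\|_1$. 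Your choice of an Auerbach basis with Hahn--Banach extended biorthogonal functionals $u_i^*$ of norm $1$ is precisely what yields $\|E\|_{\ell^1\to\ell^1} \le q\delta$ and hence the Neumann bound $\|(I-E)^{-1}\|_{\ell^1\to\ell^1} \le (1-q\delta)^{-1}$, which is where the sharp constant $q$ in \eqref{eq:gapEstimate} comes from. (You have also silently corrected the paper's typo, which writes $\delta(V,U)$ where $\delta(U,V)$ is meant in the choice of $v_i$.) This is a worthwhile rigorization of a step the paper only gestures at.
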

\begin{proof}
Item 1 is proven in Chapter IV, \S2.1 of \cite{K}, and Item 2 is Lemma 2.8 in Chapter IV, \S 2.3 of \cite{K}. For Item 3, if $U, V \in \Gc_q(\Bc)$, let $\{u_1, \cdots, u_q\}$ be a basis of $U$: for any $\a > 1$, we can obtain a basis $\{v_i\}_{i = 1}^q$ of $V$ by demanding that $|u_i - v_i| \leq \a \d(V, U)$. We can now compare $v = c_1 v_1 + \cdots + c_q v_q$ with $u := c_1 u_1 + \cdots + c_1 u_q$, proving \eqref{eq:gapEstimate} in this case. For $U, V \in \Gc^q(\Bc)$, note that by Item 2, $\d(U, V) = \d(V^{\circ}, U^{\circ})$ and $\d(V, U) = \d(U^{\circ}, V^{\circ})$, and so we can apply the $q$-dimensional version of \eqref{eq:gapEstimate} to conclude in this case.
\end{proof}

When $\Bc = E \oplus F$ is a topological splitting, it is important to know when a subspace $E' \subset \Bc$ nearby to $E$ is also complemented to $F$. Using intuition from finite dimensional geometry, it is reasonable to suggest that this involves the distance $d_H(E, E')$ and the inclination of $E$ towards $F$, namely $\sin \theta(E, F)$.

\begin{prop} \label{prop:openCond}
Let $\Bc = E \oplus F$ be a topological splitting. For any $E' \in \Gc(\Bc)$ for which $d_H(E, E') < \sin \theta(E, F)$, we have that $\Bc = E' \oplus F$ is a topological splitting, in which case the projection $\pi_{F \ds E'}$ satisfies
\begin{align}\label{eq:graphNormEst}
|\pi_{F \ds E'}|_E| \leq \frac{2 d(E, E')}{|\pi_{E \ds F}|^{-1} - d(E, E')}.
\end{align}
\end{prop}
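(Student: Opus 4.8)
The plan is to exhibit $\pi_{F \ds E'}$ directly as a perturbation of $\pi_{F \ds E}$ and to control it via a Neumann-series argument. Write $P = \pi_{E \ds F}$ and $Q = \Id - P = \pi_{F \ds E}$, so $|P|^{-1} = \sin\theta(E, F)$ by Lemma \ref{lem:angleForm}. The idea is that vectors of $E'$ are close to vectors of $E$, so the restriction $P|_{E'} : E' \to E$ should be an isomorphism when $d_H(E, E')$ is small compared to $|P|^{-1}$, and then $E' \oplus F = \Bc$ with $\pi_{E' \ds F} = (P|_{E'})^{-1} \circ P$ and $\pi_{F \ds E'} = \Id - (P|_{E'})^{-1} \circ P$. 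The bound \eqref{eq:graphNormEst} should then drop out of estimating how far $(P|_{E'})^{-1}$ is from the identity, or more precisely how far $P|_{E'}$ composed with the natural near-identity map $E \to E'$ is from $\Id_E$.

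First I would fix $e \in E$ with $|e| = 1$ and choose $e' \in E'$ with $|e - e'| \leq \d(E, E') + \eta$ for arbitrary small $\eta > 0$ (using $\d(E,E') \le d_H(E, E')$); rescaling, one may take $|e'| \le 1 + \d(E,E') + \eta$, and in fact it is cleaner to pick $e'$ on the unit sphere $S_{E'}$ realizing $d(e, S_{E'})$ up to $\eta$, so $|e - e'| \le d_H(E, E') + \eta$. Then $P e' = e - Q(e' - e) + Q(\text{correction})$; the point is that the assignment $e \mapsto e'$ defines (after a limiting/selection argument over a basis, or more slickly by noting $P|_{E'}$ is a bounded operator between the Banach spaces $E'$ and $E$) a bounded linear map $J : E \to E'$ with $\|J - \iota\| \le d_H(E,E')$ in the appropriate sense, where $\iota$ is inclusion into $\Bc$. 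Composing, $P \circ J : E \to E$ satisfies $|P J - \Id_E| \le |P| \cdot d_H(E, E') < 1$ by hypothesis, so $PJ$ is invertible on $E$ by Neumann series, hence $P|_{E'} : E' \to E$ is onto; injectivity of $P|_{E'}$ follows since any $e' \in E'$ with $Pe' = 0$ lies in $F \cap E'$, and $|e' - Q e'| = |P e'| = 0$ forces $|e'| = |Q e'| \le \ldots$ — more directly, if $e' \in E' \cap F$ with $|e'| = 1$ then $d(e', E) \ge$ something positive contradicts $d_H(E,E') < \sin\theta(E,F) \le 1$; I would instead just argue $E' \cap F = \{0\}$ from $d_H(E, E') < \sin\theta(E,F)$ together with $\d(E', E) \le 2 d_H(E,E')$ — the cleanest route is: for $f \in E' \cap F$, $|f| = 1$, we have $d(f, E) \le \d(E', E)$, but also $\sin\theta(E,F) \le |e - f|$ for all unit $e \in E$... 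I will sort out the precise chain so that $E' \cap F = \{0\}$ and $E' + F = \Bc$ (the latter from surjectivity of $P|_{E'}$, since given $v \in \Bc$, solve $Pe' = Pv$ with $e' \in E'$, then $v - e' \in \ker P = F$).

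Once $\Bc = E' \oplus F$ is established, $\pi_{F \ds E'} = \Id - (P|_{E'})^{-1} P$. To bound $|\pi_{F \ds E'}|_E|$, take $e \in E$, $|e| = 1$, and write $\pi_{F \ds E'} e = e - (P|_{E'})^{-1} P e = e - (P|_{E'})^{-1} e$ (since $Pe = e$). Now $(P|_{E'})^{-1} e = J (PJ)^{-1} e$ in the notation above, and $e - J(PJ)^{-1} e = (e - Je) + J(e - (PJ)^{-1} e)$; bounding $|e - Je| \le d_H(E, E')$ (or $\le 2d(E,E')$ via Lemma \ref{lem:apertureProps}(1)), $|(PJ)^{-1}| \le (1 - |P| d_H(E,E'))^{-1} = (|P|^{-1} - d_H(E,E'))^{-1}|P|^{-1} \cdot |P| $... and assembling these with $|P|^{-1} = \sin\theta(E,F)$, one arrives at a bound of the shape $\dfrac{2 d(E,E')}{|\pi_{E \ds F}|^{-1} - d(E,E')}$; I would manage the constant $2$ by being slightly careful to use $d(E,E')$ rather than $d_H$ where Lemma \ref{lem:apertureProps}(1) allows it. The main obstacle I anticipate is the bookkeeping to get exactly the constant in \eqref{eq:graphNormEst} — in particular making the selection $e \mapsto e'$ genuinely linear (or circumventing it by working with $P|_{E'}$ as an operator from the start and only using the metric estimates on norms), and tracking whether $d_H$ or $\d$ (and the factor $2$ between them) appears at each step. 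The analytic content is entirely the one Neumann-series inequality $|P| \cdot d_H(E,E') < 1$; everything else is triangle inequalities.
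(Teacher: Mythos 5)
There is a genuine gap in your argument for surjectivity of $P|_{E'}$, and you correctly sense it: the ``near-identity map $J: E \to E'$'' does not exist as described. The pointwise selection $e \mapsto e'$ (picking a nearby unit vector in $E'$) is not linear, and the basis-wise fix you gesture at only makes sense when $\dim E < \infty$, which the proposition does not assume; moreover even in that case the norm bound on $J - \iota$ would not follow cleanly from the Hausdorff-distance hypothesis. Your ``more slickly'' alternative of working with $P|_{E'}$ as an operator from the start is the right instinct, but then the Neumann-series inequality $|P|\, d_H(E,E') < 1$ has nothing to act on, since $P|_{E'}$ and $\Id_E$ have different domains — that inequality only helps \emph{after} you have a $J$, which is the very thing you cannot produce. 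So your plan, as written, never establishes that $P|_{E'}$ is onto.

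The organizing idea — show $P|_{E'}:E' \to E$ is an isomorphism and then set $\pi_{E'\ds F} = (P|_{E'})^{-1}P$ — is sound and can be completed, but by a different route than Neumann series: show $P|_{E'}$ is bounded below (for $e' \in S_{E'}$ pick $e \in S_E$ close to it, then $|Pe'| \geq |e| - |P|\,|e'-e| \geq 1 - |P| d_H > 0$), hence has closed range in $E$; then show $\delta(E, P(E')) \leq |P|\, d_H(E,E') < 1$, and invoke the Riesz lemma to conclude $P(E') = E$. Injectivity follows from $E' \cap F = \{0\}$, which your angle argument does give. Note that this repaired argument uses the same two ingredients as the paper's proof — distance estimates on unit spheres and the Riesz lemma — so it ends up morally equivalent; the paper simply proves the three properties ($E'\cap F = \{0\}$, $E'+F$ closed via the Kober criterion, $E'+F = \Bc$ via Riesz) directly rather than packaging them through the isomorphism $P|_{E'}$. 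Your remark about tracking the factor $2$ is on the right track: once $\pi_{E'\ds F}$ exists, one gets $|\pi_{E'\ds F}| \leq (|P|^{-1} - d_H)^{-1}$ from $\sin\theta(E',F) \geq \sin\theta(E,F) - d_H$, and then for unit $e \in E$ one kills $\pi_{F\ds E'}$ on a nearby $e' \in S_{E'}$, picking up $|\pi_{F\ds E'}| \leq 1 + |\pi_{E'\ds F}| \leq 2|\pi_{E'\ds F}|$; the displayed bound \eqref{eq:graphNormEst} follows.
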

The proof of Proposition \ref{prop:openCond} is given in full detail the Appendix. 

\subsubsection{Measure of Noncompactness}

We close this section with a brief review of the measure of noncompactness $|\cdot|_{\a}$. Let $C \subset \Bc$ be a bounded set. We define $q(C)$ to be the infimum over the set of all $r > 0$ for which $C$ admits a finite open cover by $|\cdot|$-balls of radius $r$.

\begin{defn}\label{defn:measNoncompactness}
Let $\Bc, \Bc'$ be two Banach spaces, and let $A \in L(\Bc, \Bc')$. We define the measure of noncompactness of $A$ to be $|A|_{\a} := q(A B_{\Bc})$.
\end{defn}

$|\cdot|_{\a}$ and related notions are discussed in \cite{Akhmerov}. Definition \ref{defn:measNoncompactness} is actually given as Theorem 2.4.2 in \cite{Akhmerov}, as $|\cdot|_{\a}$ is defined there in a different, yet equivalent, way. We now list some of the properties of $|\cdot|_{\a}$.

\begin{prop}[1.4.1 of \cite{Akhmerov}] \label{prop:kuratowski}
$|\cdot|_{\a}$ is a submultiplicative seminorm on $L(\Bc, \Bc')$. Precisely, let $\Bc, \Bc', \Bc''$ Banach spaces; for $T, U \in L(\Bc, \Bc')$ and $V \in L(\Bc', \Bc'')$, we have
\begin{gather*}
|T + U|_{\a} \leq |T|_{\a} + |U|_{\a}, \\
|V U|_{\a} \leq |V|_{\a} \cdot |U|_{\a}.
\end{gather*}
Moreover, $T$ is compact iff $|T|_{\a}= 0$.
\end{prop}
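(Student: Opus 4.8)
The plan is to work directly from Definition \ref{defn:measNoncompactness} via finite covers by balls, isolating first three elementary properties of the auxiliary set function $q$ and then deducing the three assertions of the proposition from them. The three properties are: (i) monotonicity, $C\subset C'\implies q(C)\le q(C')$; (ii) positive homogeneity, $q(\lambda C)=|\lambda|\,q(C)$; and (iii) subadditivity under Minkowski sums, $q(C_1+C_2)\le q(C_1)+q(C_2)$. Property (i) is immediate, and I would also record that $q(C)<\infty$ for bounded $C$ (a single ball covers $C$), so that $|\cdot|_\a$ is finite-valued on $L(\Bc,\Bc')$. For (ii), a finite cover of $C$ by balls of radius $r$ dilates under $v\mapsto\lambda v$ to a finite cover of $\lambda C$ by balls of radius $|\lambda|r$, and conversely when $\lambda\neq 0$; the case $\lambda=0$ is trivial since $q(\{0\})=0$. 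For (iii), given finite covers $C_1\subset\bigcup_i B(x_i,r_1)$ and $C_2\subset\bigcup_j B(y_j,r_2)$, the balls $B(x_i+y_j,r_1+r_2)$ form a finite cover of $C_1+C_2$, and passing to the infima over admissible $r_1,r_2$ gives (iii).

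With these in hand, homogeneity and subadditivity of $|\cdot|_\a$ follow at once. Homogeneity: $|\lambda T|_\a=q\bigl((\lambda T)B_\Bc\bigr)=q\bigl(\lambda\,(TB_\Bc)\bigr)=|\lambda|\,q(TB_\Bc)=|\lambda|\,|T|_\a$. Subadditivity: $(T+U)B_\Bc\subset TB_\Bc+UB_\Bc$, so by (i) and (iii), $|T+U|_\a\le q(TB_\Bc+UB_\Bc)\le q(TB_\Bc)+q(UB_\Bc)=|T|_\a+|U|_\a$. Together with the obvious nonnegativity and the finiteness noted above, this shows $|\cdot|_\a$ is a seminorm.

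For submultiplicativity I would exploit the linearity of $V$ once more, rather than only (i)--(iii). Fix real numbers $r>|U|_\a$ and $s>|V|_\a$ and choose finite covers $UB_\Bc\subset\bigcup_{j}B(y_j,r)$ in $\Bc'$ and $VB_{\Bc'}\subset\bigcup_{k}B(z_k,s)$ in $\Bc''$. Since $V$ is linear, $V\bigl(B(y_j,r)\bigr)\subset Vy_j+r\,(VB_{\Bc'})$, and covering $r\,(VB_{\Bc'})$ by the balls $B(rz_k,rs)$ yields $VUB_\Bc\subset\bigcup_{j,k}B(Vy_j+rz_k,rs)$, a finite cover of $VUB_\Bc$ by balls of radius $rs$. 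Hence $|VU|_\a\le rs$, and letting $r\downarrow|U|_\a$ and $s\downarrow|V|_\a$ gives $|VU|_\a\le|U|_\a\,|V|_\a=|V|_\a\,|U|_\a$; the degenerate cases $|U|_\a=0$ or $|V|_\a=0$ are covered by instead sending the corresponding radius to $0$.

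Finally, for the compactness characterization I would chain the standard equivalences: $T$ is compact iff $\overline{TB_\Bc}$ is compact in the complete space $\Bc'$, iff $TB_\Bc$ is totally bounded, iff for every $\varepsilon>0$ the set $TB_\Bc$ admits a finite cover by balls of radius $\varepsilon$, iff $q(TB_\Bc)=0$, i.e.\ $|T|_\a=0$. I do not expect a genuine obstacle anywhere; the only points needing care are the harmless interchange between ``finite cover by $\varepsilon$-balls with free centers'' and ``finite $\varepsilon$-net with centers in the set'' (these differ only by a factor of $2$, immaterial for the vanishing statement), and the bookkeeping with the strict inequalities $r>|U|_\a$, $s>|V|_\a$ so that the zero cases of submultiplicativity are handled cleanly. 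If one preferred to prove the statement for the Kuratowski measure of noncompactness in its original form --- the infimum of $\delta$ such that $C$ decomposes into finitely many subsets of diameter at most $\delta$ --- the arguments are essentially identical, replacing ``ball of radius $r$'' by ``set of diameter $\le\delta$'' throughout.
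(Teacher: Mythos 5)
Your proof is correct. The paper does not actually supply a proof of Proposition~\ref{prop:kuratowski} at all; it is stated as a citation to 1.4.1 of \cite{Akhmerov}, so there is no in-paper argument to compare against. What you have produced is a complete, self-contained and elementary verification working directly from Definition~\ref{defn:measNoncompactness}, which is exactly the kind of argument the cited reference gives. The reduction to the three set-function properties (monotonicity, homogeneity, Minkowski subadditivity) cleanly delivers the seminorm half; your cover-composition argument $VUB_\Bc\subset\bigcup_{j,k}B(Vy_j+rz_k,rs)$ for submultiplicativity is correct, including the treatment of the degenerate cases via $r\downarrow 0$ or $s\downarrow 0$; and the chain $T$ compact $\Leftrightarrow$ $TB_\Bc$ totally bounded $\Leftrightarrow$ $q(TB_\Bc)=0$ is the standard equivalence in a complete codomain. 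Your parenthetical remark about free centers versus centers in the set is the right caution and is immaterial for the vanishing statement, as you say. One stylistic note: the paper's Definition~\ref{defn:measNoncompactness} is the ball (Hausdorff) measure of noncompactness rather than Kuratowski's diameter-based one, and while you correctly work with the former and merely remark on the latter, it is worth being aware that the two differ by a multiplicative factor (between $1$ and $2$) in general, even though the seminorm, submultiplicativity, and compactness-detection properties hold identically for both.
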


\subsection{Finite dimensional subspaces and the induced volume}

In this part, we collect results on the induced volume in Definition \ref{defn:indVol}. 

We begin this subsection by enumerating the most basic properties of the induced volume. Second, we will approximate the norm on a given finite dimensional subspace by an inner product, using John's Theorem, obtaining access to the `exact' identities and relations between volumes and angles for inner product spaces. Translating what we do in the inner product setting back into the normed vector space vocabulary, we will obtain estimates with errors depending only on the dimension. Third, we will consider the measurability properties of the maximal $q$-dimensional growths $V_q$ defined in \eqref{eq:maxVolGrow}. Fourth, we will relate $V_q$ with the measure of noncompactness $|\cdot|_{\a}$.

\subsubsection{Basic Properties of the Induced Volume and Determinants}

Recall from Section 1 that the induced volume $m_E$ for a $q$-dimensional subspace $E \subset \Bc$ is the Haar measure on $E$ normalized so that $m_E \{v \in E : |v| \leq 1\} = \omega_q$, the mass of the unit ball in $\R^q$. We now give the most basic properties of $m_E$. 

\begin{lem}\label{lem:indVol}
Let $q \in \N$ and let $E \subset \Bc$ be a $q$-dimensional subspace. The induced volume $m_E$ satisfies the following.
\begin{enumerate}
\item For any $v \in E$ and any Borel $B \subset E$, we have $m_E(v + B) = m_E (B)$.
\item If $m'$ is any other non-zero, translation-invariant measure on $E$, then $m', m_E$ are equivalent measures. The Radon-Nikodym derivative $\frac{d m'}{dm_E}$ is constant on $E$, and equals $\frac{m'(B)}{m_E(B)}$, where $B \subset E$ is any Borel set with positive $m_E$-measure.
\item For any $a > 0$ and any Borel measurable set $B \subset E$, we have $m_E (a B) = a^q m_E(B)$.
\item Let $w_1, \cdots, w_q$ is any set of vectors in $E$, and write $P[v_1,\cdots,v_q] = \{\sum_{i = 1}^q \l_i v_i : 0 \leq \l_i \leq 1\}$ for the parallelepiped spanned by $\{v_1, \cdots, v_q\}$. Then, for any $a > 0$,
\[
m_E P[a w_1, w_2, \cdots, w_q] = a \cdot m_E P[w_1, \cdots, w_q].
\]

\end{enumerate}
\end{lem}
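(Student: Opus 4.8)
The plan is to extract all four statements from two facts: by Definition \ref{defn:indVol}, $m_E$ is a Haar measure on the locally compact abelian group $(E,+)$, and such a Haar measure is unique up to a positive multiplicative constant (\S2.2 of \cite{F}). Item (1) is then immediate, since translation invariance is part of the definition of Haar measure. For item (2), uniqueness of Haar measure on $(E,+)\cong\R^q$ shows that any non-zero translation-invariant Borel measure $m'$ on $E$ that is finite on compact sets satisfies $m'=c\,m_E$ for some constant $c>0$; in particular $m'$ and $m_E$ are equivalent, $\frac{dm'}{dm_E}\equiv c$, and evaluating on any Borel $B\subset E$ with $m_E(B)>0$ gives $c=m'(B)/m_E(B)$.

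For items (3) and (4) I would first record a single \emph{transformation rule}. Fix a linear isomorphism $\phi:\R^q\to E$. The pushforward $\phi_*\operatorname{Leb}$ of $q$-dimensional Lebesgue measure is a non-zero, translation-invariant, locally finite Borel measure on $E$, so by item (2) there is $c_\phi>0$ with $m_E=c_\phi\,\phi_*\operatorname{Leb}$. Any linear automorphism $T$ of $E$ is a homeomorphism, hence carries Borel sets to Borel sets, and the change-of-variables formula for Lebesgue measure under the linear map $\phi^{-1}T\phi$ of $\R^q$, together with the basis-independence of the determinant of an endomorphism of $E$, gives
\[
m_E(TB)=c_\phi\operatorname{Leb}\big((\phi^{-1}T\phi)(\phi^{-1}B)\big)=c_\phi\,|\det T|\,\operatorname{Leb}(\phi^{-1}B)=|\det T|\,m_E(B)
\]
for every Borel $B\subset E$. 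Transporting through $\phi$ also shows that every proper linear subspace of $E$ is $m_E$-null, since it lies inside a hyperplane of $\R^q$.

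Item (3) then follows by taking $T=a\,\Id_E$, whose determinant is $a^q$. For item (4), if $w_1,\dots,w_q$ are linearly dependent then both $P[w_1,\dots,w_q]$ and $P[aw_1,w_2,\dots,w_q]$ lie in the proper subspace $\langle w_1,\dots,w_q\rangle\subsetneq E$, so both sides vanish; otherwise $\{w_1,\dots,w_q\}$ is a basis of $E$, and we take $T$ to be the automorphism with $Tw_1=aw_1$ and $Tw_i=w_i$ for $i\geq2$. Since $a>0$ one checks directly that $T\,P[w_1,\dots,w_q]=P[aw_1,w_2,\dots,w_q]$, and $\det T=a$ because $T$ is diagonal in this basis with eigenvalues $a,1,\dots,1$; the transformation rule then gives the claim. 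Compactness of parallelepipeds makes all measurability questions trivial.

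The argument is entirely routine; the only points needing a moment's care are the $m_E$-nullity of proper subspaces, used in the degenerate case of item (4), and, in item (2), the implicit convention that a ``translation-invariant measure'' is a Borel measure finite on compact sets, which is what makes uniqueness of Haar measure applicable. I do not expect any substantive obstacle.
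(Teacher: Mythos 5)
Your proof is correct. Items (1)--(3) are handled exactly as in the paper (bare appeals to uniqueness of Haar measure on $(E,+)$), but for item (4) you take a genuinely different route: the paper derives it from Proposition~\ref{prop:johnThm} (John's theorem supplies an inner product on $E$ whose Lebesgue volume coincides with $m_E$, whence the standard parallelepiped formula applies), whereas you transport $m_E$ through a fixed linear isomorphism $\phi:\R^q\to E$ to obtain the transformation rule $m_E(TB)=|\det T|\,m_E(B)$ and then read off items (3) and (4) simultaneously by choosing $T=a\,\Id_E$ and $T=\operatorname{diag}(a,1,\dots,1)$ in the $\{w_i\}$ basis, respectively.

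What your approach buys: it proves all four items directly from the uniqueness of Haar measure and the change-of-variables formula for Lebesgue measure, with no appeal to John's theorem. This is more elementary, and it also untangles a mild dependency loop in the paper's ordering, since the proof of Proposition~\ref{prop:johnThm} already invokes item (2) of this very lemma before item (4) is used; your version makes Lemma~\ref{lem:indVol} logically self-contained and lets John's theorem be developed afterwards without any circularity even in appearance. The only small caveats, which you flagged yourself and handled correctly, are that the degenerate case of item (4) needs the $m_E$-nullity of proper subspaces of $E$ (immediate after transporting to $\R^q$), and that ``translation-invariant measure'' in item (2) should be read as ``Radon'' for Haar uniqueness to apply.
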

\begin{proof}
These all follow from the uniqueness of Haar measure \cite{F}, except for Item 4, which has a simple proof following from Proposition \ref{prop:johnThm}.
\end{proof}

\begin{rmk}
The normalization in Definition \ref{defn:indVol} for the induced volume was introduced by Busemann \cite{B} as a way of assigning volume elements to Finsler manifolds, where the tangent spaces are normed vector spaces, not inner product spaces. For more on this, see Chapter I, \S 8 of \cite{Ru}.
\end{rmk}

Recall now the definition of the determinant.
\begin{defn}\label{defn:det}
Let $A \in L(\Bc, \Bc')$ be a map of Banach spaces, and let $E \subset \Bc$ be a finite dimensional subspace. We define the determinant $\det(A | E)$ of $A$ on $E$ by
\[
\det (A|E) = 
\begin{cases}
\frac{m_{A E}(A B_E)}{m_E(B_E)} & A|_E \text{ injective,} \\
0 & \text{otherwise.}
\end{cases}
\]
\end{defn}

The following are some basic properties of this determinant.
\begin{prop}[Basic Properties of the Determinant] \label{prop:detProps}
Let $E, F, G$ be normed vector spaces of the same finite dimension, and let $A : E \to F, B : F \to G$ be linear maps.
\begin{enumerate}
\item For any Borel set $\Oc \subset E$ with $m_E (\Oc) > 0$,
$$
\det (A|E) = \frac{m_F(A \Oc)}{m_E(\Oc)},
$$
and so $\det(A | E)$ coincides with the Radon-Nikodym derivative $\frac{d (m_F \circ A)}{d m_E}$, where $(m_F \circ A)(\Oc) := m_F(A \Oc)$ for Borel $\Oc \subset E$.
\item If $A$ is not injective on $E$, then $\det (A | E) = 0$.
\item $\det (B A | E) = \det (B| F) \cdot \det (A | E)$.
\end{enumerate}
\end{prop}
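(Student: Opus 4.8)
The plan is to reduce everything to the uniqueness of Haar measure recorded in Lemma \ref{lem:indVol}(2). The one structural remark used throughout is that when $A|_E$ is injective it is a linear isomorphism between the $q$-dimensional normed spaces $E$ and $F = AE$, hence a homeomorphism, so it carries Borel subsets of $E$ to Borel subsets of $F$ and pushes measures forward to measures.

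For Item 1, I would first dispose of the non-injective case: if $A|_E$ is not injective then $A\Oc$ lies in the proper linear subspace $AE \subsetneq F$, which is $m_F$-null, so both $\det(A|E)$ and $m_F(A\Oc)/m_E(\Oc)$ equal $0$. When $A|_E$ is injective I would consider the set function $\nu(\Oc) := m_F(A\Oc)$ on Borel subsets of $E$; by the remark above together with the linearity of $A$, $\nu$ is a nonzero, translation-invariant Borel measure on $E$, so by Lemma \ref{lem:indVol}(2) it equals $c\, m_E$ with $c = \nu(B_E)/m_E(B_E)$, and $c = \det(A|E)$ by Definition \ref{defn:det}. This yields simultaneously the set-independent formula $\det(A|E) = m_F(A\Oc)/m_E(\Oc)$ and the identification of $\det(A|E)$ with the Radon--Nikodym derivative $d(m_F\circ A)/dm_E$. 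Item 2 is immediate from Definition \ref{defn:det}.

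For Item 3, I would first handle the degenerate cases: if $A|_E$ is non-injective then so is $(BA)|_E$, and if $A|_E$ is injective but $B|_F$ is non-injective then, since $AE = F$, $B$ is non-injective on $AE$ and again $(BA)|_E$ is non-injective; in either case both sides vanish. When both $A|_E$ and $B|_F$ are injective, $(BA)|_E$ is injective, and I would fix a Borel set $\Oc \subset E$ with $m_E(\Oc) > 0$ and chain Item 1 three times: applied to $A$ on $\Oc$, then to $B$ on the Borel set $A\Oc \subset F$ (which has positive $m_F$-measure by the first application), then to $BA$ on $\Oc$, giving $\det(BA|E)\, m_E(\Oc) = m_G(B(A\Oc)) = \det(B|F)\, m_F(A\Oc) = \det(B|F)\det(A|E)\, m_E(\Oc)$; dividing by $m_E(\Oc)$ finishes.

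I do not expect a genuine obstacle. The only point deserving a moment's care is the measure-theoretic bookkeeping in the injective case of Item 1 --- verifying that $\Oc \mapsto m_F(A\Oc)$ is honestly a Borel measure, nonzero, and translation invariant --- after which uniqueness of Haar measure does all the work, and Item 3 becomes a purely formal consequence of Item 1.
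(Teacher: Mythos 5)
Your argument is correct and is precisely the reasoning the paper is compressing into its one-line proof ("All these follow, once again, from the uniqueness of Haar measure"). You have simply spelled out the details — verifying that $\Oc \mapsto m_F(A\Oc)$ is a nonzero translation-invariant measure, invoking Lemma \ref{lem:indVol}(2), and deriving Item 3 formally from Item 1 — exactly as intended.
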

\begin{proof}
All these follow, once again, from the uniqueness of Haar measure.
\end{proof}

\subsubsection{Approximation by Inner Products}

Every pair of norms on a finite dimensional vector space is equivalent, and so given any finite dimensional subspace $E \subset \Bc$, one can always compare the norm on $E$ with \emph{any} choice of inner product $(\cdot, \cdot)$ on $E$. There is no control, however, on how bad this approximation can be, and so we must take care to ensure our approximating inner product is not too far off from the original norm. The approximation we use here comes from the following version of John's theorem.

\begin{prop} \label{prop:johnThm}
Let $q \in \N$, and let $E \subset \Bc$ be a $q$-dimensional subspace. Then, there exists an inner product $(\cdot, \cdot)_E$ on $E$ for which the following hold.
\begin{enumerate}
\item The norm $\|\cdot\|_E$ induced by $(\cdot, \cdot)_E$ satisfies the inequality
	\begin{align} \label{eq:johnThmBound}
		\frac{1}{\sqrt{q}} |v| \leq \|v\|_E \leq \sqrt{q} |v|.
	\end{align}
	for all $v \in E$.
\item The induced volume $m_E$ coincides with the Lebesgue volume on $E$ arising from the inner product $(\cdot, \cdot)_E$.
\end{enumerate}
\end{prop}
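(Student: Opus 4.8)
The plan is to prove this via the classical John ellipsoid construction, applied on the finite-dimensional space $E$ equipped with the restriction of the ambient norm $|\cdot|$. First I would pick an arbitrary background inner product on $E$ (which exists since $\dim E = q < \infty$), so that $(E, |\cdot|)$ becomes a concrete $q$-dimensional normed space and the collection of ellipsoids in $E$ is well-defined. Consider the family of all (solid) ellipsoids contained in the unit ball $B_E = \{v \in E : |v| \le 1\}$; since $B_E$ is convex, bounded, symmetric, and has nonempty interior, this family is nonempty, and a compactness argument (the relevant parameter space of ellipsoids inside $B_E$ is compact, and the volume functional is continuous) produces an ellipsoid $\mathcal{E}$ of maximal volume inside $B_E$ — the John ellipsoid. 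Define $(\cdot,\cdot)_E$ to be the inner product for which $\mathcal{E}$ is the unit ball; equivalently, $\|v\|_E \le 1 \iff v \in \mathcal{E}$. By construction $\mathcal{E} \subset B_E$, which immediately gives $|v| \le \|v\|_E$ for all $v \in E$, i.e. the lower bound $\frac{1}{\sqrt q}|v| \le \|v\|_E$ with room to spare; the real content is the reverse inclusion $B_E \subset \sqrt{q}\, \mathcal{E}$, which is exactly John's theorem in its sharp form and yields $\|v\|_E \le \sqrt q\, |v|$.

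For item (1), rather than re-proving John's theorem from scratch I would invoke the standard statement (e.g. from Wojtaszczyk \cite{Wo} or any text on the local theory of Banach spaces): for any $q$-dimensional normed space, the maximal-volume inscribed ellipsoid $\mathcal{E}$ satisfies $\mathcal{E} \subset B_E \subset \sqrt q\,\mathcal{E}$. Translating the inclusion chain $\mathcal{E} \subset B_E \subset \sqrt q\, \mathcal{E}$ into norm inequalities via the identification $\|v\|_E = \inf\{t > 0 : v \in t\mathcal{E}\}$ gives precisely \eqref{eq:johnThmBound}. (I would remark that the precise constant $\sqrt q$ is not essential for the rest of the paper — any constant $C_q$ depending only on $q$ would do — but $\sqrt q$ is the sharp one and costs nothing to state.)

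For item (2), the point is that the induced volume $m_E$ was \emph{defined} (Definition \ref{defn:indVol}) as the Haar measure on $E$ normalized so that $m_E(B_E) = \omega_q$, the Euclidean volume of the unit ball in $\R^q$. Meanwhile, the Lebesgue volume $\lambda_E$ arising from $(\cdot,\cdot)_E$ is, by definition, the Haar measure on $E$ normalized so that the $\|\cdot\|_E$-unit ball $\mathcal{E}$ has volume $\omega_q$ (since under any linear isometry $(E, \|\cdot\|_E) \to (\R^q, |\cdot|_{\mathrm{eucl}})$ the ball $\mathcal{E}$ maps to the Euclidean unit ball). Both $m_E$ and $\lambda_E$ are nonzero translation-invariant measures on $E$, so by uniqueness of Haar measure (Lemma \ref{lem:indVol}(2)) they are proportional: $\lambda_E = c\, m_E$ for some $c > 0$. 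Hence the two normalizations will agree once I check $m_E(\mathcal{E}) = \omega_q$ as well. But this requires knowing $m_E$ of an \emph{ellipsoid}, not just of $B_E$ — so I would instead argue as follows: the Euclidean-isometric image of $\mathcal{E}$ under a $\|\cdot\|_E$-isometry $\Phi : E \to \R^q$ has Lebesgue measure $\omega_q$, and $\Phi_* \lambda_E$ is a translation-invariant measure on $\R^q$ assigning the Euclidean unit ball mass $\omega_q$, hence $\Phi_* \lambda_E$ is Lebesgue measure; pulling back, $\lambda_E(\mathcal{E}) = \omega_q$. On the other hand the John inclusions $\mathcal{E} \subset B_E \subset \sqrt q\, \mathcal{E}$ combined with $\lambda_E = c\, m_E$ and scaling (Lemma \ref{lem:indVol}(3)) give $\omega_q = \lambda_E(\mathcal{E}) \le \lambda_E(B_E) = c\, m_E(B_E) = c\, \omega_q$, so $c \ge 1$; and $\lambda_E(B_E) \le \lambda_E(\sqrt q\, \mathcal{E}) = q^{q/2}\,\omega_q$ gives only $c \le q^{q/2}$, which is not enough to pin down $c = 1$. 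So this comparison route does not by itself force $c=1$; the genuinely correct argument for (2) is simply that \emph{both} measures are computed in the \emph{same} coordinate system: fix a linear isomorphism $\Phi : E \to \R^q$ that is an isometry for $(\cdot,\cdot)_E$. Then $\Phi_*\lambda_E$ is Lebesgue measure on $\R^q$ (by the normalization of $\lambda_E$), and I need to show $\Phi_* m_E$ is also Lebesgue measure, i.e. $m_E(\Phi^{-1}(\text{Euclidean unit ball})) = \omega_q$ — but $\Phi^{-1}(\text{Euclidean unit ball}) = \mathcal{E}$, so I am back to needing $m_E(\mathcal{E}) = \omega_q$.

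\emph{Resolution of the subtlety in (2).} The clean fix is to observe that the defining normalization of $m_E$ can be transported: since $m_E$ is the unique Haar measure with $m_E(B_E) = \omega_q$, and Lebesgue volume $\lambda_E$ is a Haar measure, we have $m_E = \frac{\omega_q}{\lambda_E(B_E)}\,\lambda_E$. Thus item (2) — that $m_E$ \emph{coincides with} $\lambda_E$ — is the assertion $\lambda_E(B_E) = \omega_q$, equivalently that the maximal-volume John ellipsoid inside $B_E$ has the \emph{same} Euclidean-normalized volume as... no. In fact item (2) as literally stated is \emph{false} for a general John ellipsoid: if $E = \ell_1^2$ then $B_E$ is a square, $\mathcal{E}$ is the inscribed disk, and $\lambda_E(B_E) = 4$ while $\omega_2 = \pi$. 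Therefore the intended reading of item (2) must be that $m_E$ coincides with \emph{the Lebesgue volume normalized by $m_E(B_E) = \omega_q$ in the $(\cdot,\cdot)_E$-coordinates}, i.e. simply that $m_E$ \emph{is a} scalar multiple of $\lambda_E$ — which is automatic from Lemma \ref{lem:indVol}(2) and needs no John's theorem — OR, more likely given how it is used later (to run Euclidean volume/angle identities as $q$-dependent inequalities), the statement means: under the identification of $(E,\|\cdot\|_E)$ with Euclidean $\R^q$, the measure $m_E$ transports to a \emph{constant} multiple of Lebesgue measure, with the constant controlled by $q$ via \eqref{eq:johnThmBound} and Lemma \ref{lem:indVol}(3). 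Concretely, $q^{-q/2}\,\omega_q \le \lambda_E(B_E) \le q^{q/2}\,\omega_q$, hence $q^{-q/2} \le \tfrac{d\lambda_E}{dm_E} \le q^{q/2}$. That is the substantive, correct, and usable content, and that is what I would record as the proof of (2), with a sentence flagging that the equality "$m_E = \lambda_E$" is to be read in the normalized sense. I expect \textbf{this normalization bookkeeping in item (2) to be the only real obstacle}; item (1) is a direct quotation of John's theorem plus translation of inclusions into norms, which is entirely routine.
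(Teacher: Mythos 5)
Your John-ellipsoid construction is the right idea (the paper uses the circumscribed, minimal-volume John ellipsoid where you use the inscribed, maximal-volume one; either works). You also correctly observe that item (2) fails if $(\cdot,\cdot)_E$ is taken to be the inner product whose unit ball is exactly the John ellipsoid $\mathcal{E}$: in general $m_E(\mathcal{E}) \neq \omega_q$. But you then conclude that item (2) must be false as literally stated and needs reinterpretation --- that is the gap. The paper's actual move, which you missed, is to \emph{rescale} the John inner product by a positive scalar, chosen precisely so that the resulting $\|\cdot\|_E$-unit ball has $m_E$-measure $\omega_q$. Once that normalization holds, the Lebesgue measure of $(\cdot,\cdot)_E$ and $m_E$ are two Haar measures agreeing on a set of positive finite measure, so by uniqueness of Haar measure (Lemma \ref{lem:indVol}(2)) they coincide and item (2) holds verbatim. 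Your $\ell_1^2$ computation is not a counterexample to the proposition; it is a counterexample to choosing the John ellipsoid as the unit ball \emph{without} rescaling.

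What makes the rescaling legal --- the point you would need to check, and which neither the paper's proof nor your write-up spells out --- is that the scalar lands automatically in a range that still gives item (1). In your setup, $\mathcal{E} \subset B_E \subset \sqrt q\,\mathcal{E}$ together with $m_E(B_E) = \omega_q$ and the scaling identity in Lemma \ref{lem:indVol}(3) yield $q^{-q/2}\omega_q \leq m_E(\mathcal{E}) \leq \omega_q$. The scalar $s > 0$ defined by $m_E(\frac{1}{s}\mathcal{E}) = \omega_q$, i.e. $s = (m_E(\mathcal{E})/\omega_q)^{1/q}$, therefore lies in $[q^{-1/2}, 1]$. Replacing $\|\cdot\|_E$ by $s\|\cdot\|_E$ converts your one-sided bound $|v| \leq \|v\|_E \leq \sqrt q\,|v|$ into $s|v| \leq s\|v\|_E \leq s\sqrt q\,|v|$, and since $q^{-1/2} \leq s \leq 1$ this gives precisely the symmetric bound \eqref{eq:johnThmBound}. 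So there is no tension between items (1) and (2): the statement is correct as written, and the proof is exactly your construction plus the rescaling step.
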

\begin{proof}
This is merely a re-telling of John's theorem (Theorem 15 in Chapter 4 of \cite{Bo}), which asserts that given any convex body $C \subset \R^n$ which contains $0$ in its interior and is symmetric about the origin, there is a unique ellipsoid $D$ containing $C$, having minimal volume among all ellipsoids containing $C$; additionally, $D$ satisfies the inclusion $\frac{1}{\sqrt{n}} D \subset C$. 

Noting that ellipsoids centered at $0$ in $\R^n$ are in one-to-one correspondence with inner products on $\R^n$, we let $(\cdot, \cdot)$ refer to the inner product from John's Theorem applied to the unit ball $B_E$ of $(E, |\cdot|)$.  We modify $(\cdot, \cdot)$  by a scalar to define the inner product $(\cdot, \cdot)_E$: the scalar is chosen so that the Lebesgue volume of $(\cdot, \cdot)_E$ and the induced volume $m_E$ coincide. By Item 2 of Lemma \ref{lem:indVol}, this is ensured by the condition $m_E \{v \in E : \|v\|_E \leq 1\} = \omega_q$. 

The bound in \eqref{eq:johnThmBound} follows from the fact that the norm $\|\cdot\|$ induced by the John's theorem inner product $(\cdot, \cdot)$ satisfies $\|v\| \leq |v| \leq \sqrt{q} \|v\|$ for all $v \in E$.
\end{proof}

\noindent {\bf Notation.} For the remainder of this subsection, $q \in \N$ is fixed, and if $a,b > 0$ are real numbers, we will write $a \lesssim b$ if there is a constant $C_q$ depending only on $q$ for which $a \leq C_q b$, and similarly for $a \gtrsim b$. We write $a \approx b$ if $a \lesssim b$ and $a \gtrsim b$ hold. For example, \eqref{eq:johnThmBound} in Proposition \ref{prop:johnThm} can be written as $ \|v\|_E \approx |v|$ for $v \in E$.

We now pursue the program of using $(\cdot, \cdot)_E, \| \cdot\|_E$ to deduce approximate identities for the induced volume $m_E$. There are two applications we will cover in this section: the first is a way of estimating `block' determinants, and the second involves estimating determinants in terms of special bases.

\smallskip
\noindent{\bf Application 1: Estimating `Block Determinants'.}

Let $V = E \oplus F$ be a splitting of a finite dimensional subspace $V \subset \Bc$, where $\dim V = q$ and $\dim E = k = q - \dim F$ for some $k < q$. For $A \in L(\Bc)$, our goal is to estimate $\det(A | V)$ in terms of the product $\det(A | E) \cdot \det(A | F)$.

The point of departure for us is the explicit formula for the volume of a parallelepiped in an inner product space: when $v_1, \cdots, v_q \in V$,
\begin{align}\label{eq:massForm}
m_V P[v_1, \cdots, v_q] = \|v_q\|_V \cdot \prod_{i = 1}^{q - 1} d_V(v_i, \langle v_j : i < j \leq q \rangle),
\end{align}
where $d_V$ refers to the minimal distance taken using the $\|\cdot\|_V$ norm. 

Let $v_1, \cdots, v_k$ and $v_{k + 1}, \cdots, v_q$ be $|\cdot|$-unit vector bases for $E, F$ respectively, and observe that because of this choice,
\[
m_V P[v_1, \cdots, v_q] = m_V P[v_{k + 1}, \cdots, v_q] \cdot \prod_{i =1 }^k d_V(v_i, \langle v_j : i < j \leq q \rangle )
\]
holds by formula \eqref{eq:massForm} for $m_V P[v_{k + 1}, \cdots, v_q]$, where we abuse notation and let $m_V $ refer also to the volume on $F$ arising from the restriction of $\|\cdot\|_V$ to $F$. On the other hand, $\|v\|_V \approx |v| \approx \|v\|_F$ by \eqref{eq:johnThmBound} for all $v \in F$, from which we deduce $m_V P[v_{k + 1}, \cdots, v_q] \approx m_F P[v_{k + 1}, \cdots, v_q]$ by Lemma \ref{lem:indVol}, Items 2 and 3. Collecting this,
\begin{align}\label{eq:splitParaEst2}
m_V P[v_1, \cdots, v_q] \approx m_F P[v_{k + 1}, \cdots, v_q] \cdot \prod_{i = 1}^k d(v_i, \langle v_j : i < j \leq q \rangle ).
\end{align}
Observe that $d(v_i, \langle v_j : i < j \leq q \rangle) = \sin \theta(\langle v_i \rangle, \langle v_j : i < j \leq q \rangle) = | \pi_{\langle v_i \rangle \ds \langle v_j : j < i \leq q \rangle}|^{-1}$ by Lemma \ref{lem:angleForm}, so we must estimate $| \pi_{\langle v_i \rangle \ds \langle v_j : j < i \leq q \rangle}|$ from above and below. Note that $\pi_{\langle v_i \rangle \ds \langle v_j : j < i \leq q \rangle} = \pi_{\langle v_i \rangle \ds \langle v_j : i < j \leq k \rangle} \circ \pi_{E \ds F}$ and $\pi_{\langle v_i \rangle \ds \langle v_j : i < j \leq k \rangle} = \pi_{\langle v_i \rangle \ds \langle v_j : i < j \leq q \rangle} |_E$, so it follows that
\begin{align*}
d(v_i, \langle v_j : i < j \leq k \rangle )^{-1} & =  |\pi_{\langle v_i \rangle \ds \langle v_j : i < j \leq k \rangle}|  \leq | \pi_{\langle v_i \rangle \ds \langle v_j : j < i  \leq q \rangle}| \\ & \leq |\pi_{E \ds F}| \cdot |\pi_{\langle v_i \rangle \ds \langle v_j : i < j \leq k \rangle}| 
= |\pi_{E \ds F}| \cdot d(v_i, \langle v_j : i < j \leq k \rangle )^{-1}.
\end{align*}
An application of \eqref{eq:massForm} to $m_E P[v_1, \cdots, v_k]$ now implies
\begin{align} \label{eq:paraMassSplit}
|\pi_{E \ds F}|^{-1} \lesssim \frac{m_V P[v_1, \cdots, v_q]}{m_E P[v_1, \cdots, v_k] \cdot m_F P[v_{k + 1}, \cdots, v_q]} \lesssim 1,
\end{align}
as long as the denominator of the central term in \eqref{eq:paraMassSplit} is nonzero.

Having estimated in this way, one can now estimate the determinant $\det(A | V)$ by considering \eqref{eq:paraMassSplit} applied to the parallelepipeds $P[v_1, \cdots, v_q]$ and $P[A v_1, \cdots, A v_q] = A P[v_1, \cdots, v_q]$. We collect the results in the following lemma. 

\begin{lem} \label{lem:detSplit}
Let $A \in L(\Bc)$ and $V \subset \Bc$ have dimension $q$. Let $V = E \oplus F$ be a splitting with $\dim E = k < q$, and assume that $A|_V$ is injective. Writing $V' = A V, E' = A E, F' = A F$, we have the estimate
\[
\frac{1}{|\pi_{E' \ds F'}|^k} \lesssim \frac{\det(A | V)}{\det(A | E) \det(A | F)} \lesssim |\pi_{E \ds F}|^k.
\]
\end{lem}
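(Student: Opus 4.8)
The plan is to derive Lemma \ref{lem:detSplit} directly from the parallelepiped estimate \eqref{eq:paraMassSplit}, applied once to a cleverly chosen spanning set for $V$ and once to its image under $A$. First I would fix $|\cdot|$-unit vector bases $v_1, \dots, v_k$ of $E$ and $v_{k+1}, \dots, v_q$ of $F$, so that together they form a basis of $V = E \oplus F$. Since $A|_V$ is injective, $A v_1, \dots, A v_q$ is a (not necessarily unit) basis of $V' = A V$, and $A E = E'$, $A F = F'$ with $V' = E' \oplus F'$ an algebraic — hence (finite-dimensional) topological — splitting. The key identity is $\det(A|V) = m_{V'} P[A v_1, \dots, A v_q] / m_V P[v_1, \dots, v_q]$, which holds by Proposition \ref{prop:detProps}(1) taking $\Oc = P[v_1, \dots, v_q]$ and noting $A P[v_1, \dots, v_q] = P[A v_1, \dots, A v_q]$; similarly $\det(A|E) = m_{E'} P[A v_1, \dots, A v_k]/m_E P[v_1, \dots, v_k]$ and $\det(A|F) = m_{F'} P[A v_{k+1}, \dots, A v_q]/m_F P[v_{k+1}, \dots, v_q]$ (all denominators are nonzero: the $v_i$ are unit vectors, and by \eqref{eq:massForm} applied in the John inner product, the parallelepiped volumes are positive since the vectors are linearly independent).

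Next I would invoke \eqref{eq:paraMassSplit} twice. Applied to $E \oplus F = V$ with the chosen basis it gives
\[
|\pi_{E \ds F}|^{-1} \lesssim \frac{m_V P[v_1, \dots, v_q]}{m_E P[v_1, \dots, v_k] \cdot m_F P[v_{k+1}, \dots, v_q]} \lesssim 1.
\]
Applied to $E' \oplus F' = V'$ one must be slightly careful, because \eqref{eq:paraMassSplit} as stated in the excerpt was derived for $|\cdot|$-unit vector bases, whereas $A v_1, \dots, A v_q$ need not be unit vectors. However, by Lemma \ref{lem:indVol}(4) (and the analogous scaling on $E'$, $F'$ from Lemma \ref{lem:indVol}(3)), rescaling each $A v_i$ to a unit vector multiplies numerator and denominator of the central ratio in \eqref{eq:paraMassSplit} by exactly the same factor $\prod_i |A v_i|^{-1}$, so the ratio — and hence the two-sided bound — is unchanged. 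Thus
\[
|\pi_{E' \ds F'}|^{-1} \lesssim \frac{m_{V'} P[A v_1, \dots, A v_q]}{m_{E'} P[A v_1, \dots, A v_k] \cdot m_{F'} P[A v_{k+1}, \dots, A v_q]} \lesssim 1.
\]

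Finally I would divide the second chain of inequalities by the first. The ratio of the central terms is precisely $\det(A|V) / (\det(A|E)\det(A|F))$ after cancelling the six parallelepiped volumes against the three determinant identities above. Combining the bounds gives
\[
\frac{|\pi_{E' \ds F'}|^{-1}}{1} \lesssim \frac{\det(A|V)}{\det(A|E)\det(A|F)} \lesssim \frac{1}{|\pi_{E' \ds F'}|^{-1} \cdot \text{(lower bound)}},
\]
but to get the stated clean form one should be a little more careful about which factor of $|\pi_{\cdot \ds \cdot}|$ lands where: dividing "second $\gtrsim$-side" by "first $\lesssim$-side" yields the lower bound $\det(A|V)/(\det(A|E)\det(A|F)) \gtrsim |\pi_{E' \ds F'}|^{-1}$, and dividing "second $\lesssim$-side" by "first $\gtrsim$-side" yields the upper bound $\det(A|V)/(\det(A|E)\det(A|F)) \lesssim |\pi_{E \ds F}|$. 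To upgrade the exponent from $1$ to $k$, note that the block estimate \eqref{eq:paraMassSplit} was itself obtained by peeling off $k$ directions $v_1, \dots, v_k$ one at a time, each contributing a factor controlled by $|\pi_{E \ds F}|$ (resp. $|\pi_{E' \ds F'}|$) — so one either re-examines that derivation to see the exponent is genuinely $k$, or re-runs the one-at-a-time argument keeping track of $k$ comparisons of the form $d(v_i, \langle v_j : i < j \le k\rangle)^{-1} \le |\pi_{E \ds F}| \cdot d(v_i, \langle v_j : i < j \le q\rangle)^{-1}$ composed $k$ times. I expect this bookkeeping of the exponent $k$, together with the harmless-but-fiddly rescaling of the $A v_i$ to unit vectors, to be the only real obstacle; the rest is just multiplicativity of the determinant and the uniqueness of Haar measure packaged in Proposition \ref{prop:detProps}.
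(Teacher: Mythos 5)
Your proposal is correct and follows precisely the route the paper sketches (applying \eqref{eq:paraMassSplit} to $P[v_1,\dots,v_q]$ and to its image $P[Av_1,\dots,Av_q]$ and dividing). You also correctly flag two points the paper glosses over: the displayed inequality \eqref{eq:paraMassSplit} carries a typographical exponent of $1$ where its own derivation --- a product over $i=1,\dots,k$ of factors each squeezed between $|\pi_{E\ds F}|^{-1}$ and $1$ --- actually produces the exponent $k$ needed for Lemma~\ref{lem:detSplit}, and the rescaling via Lemma~\ref{lem:indVol}(4) that restores unit vectors on the image side leaves the central ratio unchanged, so the bound transfers cleanly.
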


\begin{rmk} \label{rmk:sampleVolProof}
Observe that Lemma \ref{lem:sampleVol} for a $q$-dimensional subspace $E \subset \Bc$ follows straightaway from \eqref{eq:massForm} applied to the norm $\|\cdot\|_E$, using \eqref{eq:johnThmBound} to compare $|\cdot|$ with $\|\cdot\|_E$.
\end{rmk}

\smallskip
\noindent{ \bf Application 2: Estimating Determinants from Bases.}

Let $q \in \N$, $A \in L(\Bc)$, and let $V \subset \Bc$ be a $q$-dimensional subspace. We will give here an estimate for $\det(A | V)$ in terms of a special kind of basis for $V$.  To begin, form a $(\cdot, \cdot)_V$-orthonormal set $v_1, \cdots, v_q$ of $V$ and recall the Hadamard bound for the determinant:
\begin{align}\label{eq:hadamardBoundIP}
\det(A | V) \leq \prod_{i = 1}^q \|A v_i \|_{V'},
\end{align}
where $V' = A V$. 

More refined information is realized using the Singular Value Decomposition (SVD): if $\{v_i\}$ is an orthonormal SVD basis for $A|_V$ on $(V, (\cdot, \cdot)_V)$ (that is, a $(\cdot, \cdot)_V$-orthonormal basis of eigenvectors of $(A|_V)^* A|_V$, where the adjoint $(A|_V)^*$ is taken with respect to $(V, (\cdot, \cdot)_V)$ and $(V', (\cdot, \cdot)_{V'})$), then 
\begin{align}\label{eq:SVDformula}
\det(A | V) = \prod_{i = 1}^q \|A v_i\|_{V'},
\end{align}
and when $A|_V$ is injective, the set $\{w_i\}$ defined by $w_i = A v_i / \|A v_i\|_{V'}$ forms a $(\cdot, \cdot)_{V'}$-orthonormal set (as $\{w_i\}$ is a set of eigenvectors for $(A|_V) (A|_V)^*$). 

All this motivates the following definition.
\begin{defn}
Let $V \subset \Bc$ be a subspace of dimension $q$. A basis of $|\cdot|$-unit vectors $v_1, \cdots, v_q$ for $V$ is called an \emph{almost orthonormal}
basis if it is $(\cdot, \cdot)_V$-orthogonal.
\end{defn}

We now apply \eqref{eq:johnThmBound} to each of \eqref{eq:hadamardBoundIP} and $\eqref{eq:SVDformula}$, obtaining the following.

\begin{prop}\label{prop:approxSVD}
Let $A  \in L(\Bc)$ and let $V \subset \Bc$ be a subspace with $\dim V = q$.  

\begin{enumerate}
\item (Hadamard Bound) If $v_1, \cdots, v_q$ is an almost orthonormal basis of $V$, then
\[
\det(A|V) \lesssim  \prod_{i = 1}^q |A v_i|
\]

\item (Singular Value Decomposition) There exists an almost orthonormal basis $v_1, \cdots, v_q$ for $V$ such that
\[
 \det(A) \approx \prod_{i = 1}^q |A v_i| 
\]
Write $V' = A V$. If $A : V \to V'$ is invertible, then the basis $w_i = A v_i / |A v_i|$ for $V'$ is almost orthonormal.
\end{enumerate}
\end{prop}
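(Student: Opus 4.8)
\textbf{Proof plan for Proposition \ref{prop:approxSVD}.}

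The plan is to transport the exact inner-product statements \eqref{eq:hadamardBoundIP} and \eqref{eq:SVDformula} through the John's theorem comparison \eqref{eq:johnThmBound}, applied simultaneously on the source space $V$ and the target space $V' = AV$. First I would fix John inner products $(\cdot,\cdot)_V$ on $V$ and $(\cdot,\cdot)_{V'}$ on $V'$ as in Proposition \ref{prop:johnThm}, so that $m_V$, $m_{V'}$ coincide with the corresponding Lebesgue volumes and $\tfrac{1}{\sqrt q}|v| \le \|v\|_V \le \sqrt q |v|$ for $v \in V$, and likewise on $V'$. Since both $m_V$ and $m_{V'}$ are the Lebesgue volumes of these inner products, the ratio defining $\det(A|V)$ in Definition \ref{defn:det} is exactly the inner-product determinant of the linear map $A\colon (V,(\cdot,\cdot)_V) \to (V',(\cdot,\cdot)_{V'})$, so the classical identities \eqref{eq:hadamardBoundIP}, \eqref{eq:SVDformula} are available verbatim.

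For item (1): given an almost orthonormal basis $v_1,\dots,v_q$ of $V$ — a basis of $|\cdot|$-unit vectors that is $(\cdot,\cdot)_V$-orthogonal — I would first normalize to $\hat v_i = v_i/\|v_i\|_V$, which is a $(\cdot,\cdot)_V$-orthonormal basis, apply the Hadamard bound \eqref{eq:hadamardBoundIP} to $\{\hat v_i\}$ to get $\det(A|V) \le \prod_i \|A \hat v_i\|_{V'}$, and then undo the normalization: $\|A\hat v_i\|_{V'} = \|A v_i\|_{V'}/\|v_i\|_V \le \sqrt q\, |A v_i| \cdot \sqrt q = q\,|Av_i|$ using \eqref{eq:johnThmBound} on $V'$ (upper bound) and on $V$ (lower bound $\|v_i\|_V \ge \tfrac{1}{\sqrt q}|v_i| = \tfrac{1}{\sqrt q}$). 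Taking the product over $i$ gives $\det(A|V) \le q^{q} \prod_i |Av_i|$, i.e. $\det(A|V) \lesssim \prod_i |Av_i|$, since $q^q$ is a constant depending only on $q$.

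For item (2): take a $(\cdot,\cdot)_V$-orthonormal SVD basis $\{v^\circ_i\}$ for $A|_V\colon (V,(\cdot,\cdot)_V)\to(V',(\cdot,\cdot)_{V'})$ as recalled above, so that $\det(A|V) = \prod_i \|A v^\circ_i\|_{V'}$ exactly by \eqref{eq:SVDformula}, and set $v_i = v^\circ_i/|v^\circ_i|$ to get a basis of $|\cdot|$-unit vectors that is still $(\cdot,\cdot)_V$-orthogonal, hence almost orthonormal; the two-sided bound $\prod_i \|Av^\circ_i\|_{V'} \approx \prod_i |Av_i|$ follows exactly as in the computation for item (1), now used in both directions via \eqref{eq:johnThmBound} on $V$ and $V'$. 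Finally, when $A\colon V\to V'$ is invertible, the vectors $A v^\circ_i/\|Av^\circ_i\|_{V'}$ are $(\cdot,\cdot)_{V'}$-orthonormal (eigenvectors of $(A|_V)(A|_V)^*$), and the $|\cdot|$-normalized vectors $w_i = Av_i/|Av_i|$ are positive scalar multiples of them, hence remain $(\cdot,\cdot)_{V'}$-orthogonal and are $|\cdot|$-unit, so $\{w_i\}$ is an almost orthonormal basis of $V'$. I do not expect a genuine obstacle here; the only point requiring care is bookkeeping the two separate John comparisons — one on $V$, one on $V'$ — and checking that passing between $(\cdot,\cdot)$-normalization and $|\cdot|$-normalization preserves the almost orthonormality property (it does, since it is an $\|\cdot\|_V$-rescaling of each basis vector, which changes neither orthogonality nor the property of being a basis).
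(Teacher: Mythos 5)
Your proof is correct and follows exactly the route the paper indicates — transporting the exact inner-product identities \eqref{eq:hadamardBoundIP} and \eqref{eq:SVDformula} through the John comparison \eqref{eq:johnThmBound} applied on both $V$ and $V'=AV$. The paper only sketches this in one sentence, and your write-up supplies the bookkeeping (the two separate normalizations via $\hat v_i = v_i/\|v_i\|_V$ and $v_i = v^\circ_i/|v^\circ_i|$) without any gap; you implicitly and correctly read $\det(A)$ in item (2) as the typo it is for $\det(A|V)$.
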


The following is a simple corollary of Proposition \ref{prop:approxSVD}, relating the minimal expansion of a linear map on a finite dimensional subspace to volume growth on that subspace. 

\begin{cor} \label{cor:minExpand}
Let $A \in L(\Bc)$ and $E \subset \Bc$ be a $q$-dimensional subspace. For any $v \in E \setminus \{0\}$,
\[
\frac{|Av|}{|v|} \cdot |A|_E|^{q-1} \gtrsim \det(A|E).
\]
\end{cor}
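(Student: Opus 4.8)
The plan is to apply the Singular Value Decomposition part of Proposition \ref{prop:approxSVD} to produce an almost orthonormal basis $v_1, \dots, v_q$ of $E$ realizing $\det(A|E) \approx \prod_{i=1}^q |A v_i|$, and then bound each factor $|A v_i|$ by $|A|_E|$ except for one cleverly chosen factor which we bound below in terms of a prescribed vector $v$. First I would dispense with the trivial case: if $A|_E$ is not injective, then $\det(A|E) = 0$ and the inequality is immediate, so assume $A|_E$ is injective. Now invoke Proposition \ref{prop:approxSVD}(2) to get an almost orthonormal basis $v_1, \dots, v_q$ of $E$ with $\det(A|E) \lesssim \prod_{i=1}^q |A v_i|$, and recall that (after reordering) the companion vectors $w_i = A v_i / |A v_i|$ form an almost orthonormal basis of $V' = AE$; crucially the factors $|A v_i|$ are, up to the dimension-dependent constant, the singular values of $A|_E$, so we may assume they are ordered $|A v_1| \geq \dots \geq |A v_q|$.

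The key step is the following: since $|A v_i| \leq |A|_E|$ for every $i$ (each $v_i$ is a $|\cdot|$-unit vector), we obtain
\[
\det(A|E) \lesssim |A|_E|^{q-1} \cdot |A v_q| = |A|_E|^{q-1} \cdot \min_i |A v_i|,
\]
so it remains to show $|A v_q| \lesssim |Av|/|v|$ for the given $v \in E \setminus \{0\}$, or more precisely that $\min_i |A v_i| \lesssim |Av|/|v|$. Write $v/|v| = \sum_i c_i v_i$ in the almost orthonormal basis; since $\{v_i\}$ is $(\cdot,\cdot)_E$-orthonormal up to the John rescaling, \eqref{eq:johnThmBound} gives $\sum_i c_i^2 \approx \|v/|v|\|_E^2 \approx 1$, hence $\max_i |c_i| \gtrsim 1/\sqrt{q} \approx 1$ — in fact it suffices that some $|c_{i_0}| \gtrsim 1$. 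Then, because $\{w_i\}$ is almost orthonormal in $V'$, the vector $A(v/|v|) = \sum_i c_i |A v_i| w_i$ has norm bounded below by $\|\cdot\|_{V'}$-orthogonality: $|A v / |v|| \gtrsim \|\sum_i c_i |A v_i| w_i\|_{V'} \geq |c_{i_0}| \cdot |A v_{i_0}| \gtrsim |A v_{i_0}| \geq \min_i |A v_i|$. Combining, $|Av|/|v| \gtrsim \min_i |A v_i| = |A v_q|$, and plugging into the display above finishes the proof.

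The main obstacle I anticipate is the bookkeeping in the last step: one must be careful that the almost orthonormal basis furnished by Proposition \ref{prop:approxSVD} is genuinely an SVD basis (so that the $|Av_i|$ behave like singular values and in particular $\min_i |Av_i| = |Av_q|$ under the chosen ordering), and that the lower bound $|A(v/|v|)|_{V'} \gtrsim |c_{i_0}| |A v_{i_0}|$ really does follow from almost-orthogonality of $\{w_i\}$ rather than requiring exact orthogonality — this is where \eqref{eq:johnThmBound} applied on $V'$ absorbs the distortion into a constant $C_q$. Everything else is a routine chain of $\lesssim$ estimates with dimension-dependent constants, exactly in the spirit of the preceding subsection.
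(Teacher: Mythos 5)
Your argument is sound, but it routes through the SVD half of Proposition \ref{prop:approxSVD} when the Hadamard half alone suffices. A more direct derivation: normalize $|v|=1$, take a $(\cdot,\cdot)_E$-orthogonal basis of the $(\cdot,\cdot)_E$-orthogonal complement of $\langle v\rangle$ in $E$, rescale each vector to $|\cdot|$-unit length, and prepend $v$; this is an almost orthonormal basis $v_1=v,v_2,\dots,v_q$ of $E$ \emph{containing} $v$, and the Hadamard bound (Item 1 of Proposition \ref{prop:approxSVD}) gives $\det(A|E)\lesssim\prod_i|Av_i|\leq|Av|\cdot|A|_E|^{q-1}$ in one line, with no injectivity hypothesis needed (the non-injective case is $\det(A|E)=0$ and is trivial either way). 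Your route instead fixes the SVD basis, expands $v/|v|$ in it, extracts a coefficient $|c_{i_0}|\gtrsim 1$ using John's theorem on $E$, and then lower-bounds $|A(v/|v|)|$ via the $(\cdot,\cdot)_{V'}$-orthogonality of the $w_i$ together with John's theorem on $V'=AE$. That logic is correct, and the constants all depend only on $q$; but it requires separately disposing of the non-injective case (since the $w_i$ are only available when $A|_E$ is invertible), invokes John's theorem on two different subspaces, and is several steps longer. What your approach buys is the singular-value picture: $\det(A|E)$ is dominated by the top singular value to the $(q-1)$-st power times the minimal stretch $\min_i|Av_i|$, which you then bound by $|Av|/|v|$. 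Both proofs are valid; the direct one is the shorter argument the paper most likely has in mind when it calls this a ``simple corollary.''
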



\subsubsection{The Determinant and Measurability}

We now give a lemma on the continuity properties of the determinant and the maximal $q$-dimensional growth $V_q$, defined for $A \in L(\Bc)$ by
\[
V_q(A) = \sup\{\det(A | E) : \dim E = q\}.
\]

\begin{lem}\label{lem:measDet}
Let $q \in \N$, and let $E \subset \Bc$ be $q$-dimensional subspace.

\begin{enumerate}
\item If $T_n \to T$ in norm on $L(\Bc)$, then $\det(T_n | E) \to \det(T | E)$.

\item If $T_n \to T$ in norm on $L(\Bc)$, then $ V_q(T_n) \to V_q(T)$.

\end{enumerate}
\end{lem}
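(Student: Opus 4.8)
\textbf{Proof plan for Lemma \ref{lem:measDet}.}

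The plan is to prove Item (1) first, and then derive Item (2) from it together with a uniform bound on the operator norms of maps appearing in the supremum. For Item (1), fix the $q$-dimensional subspace $E$ and a Borel set $\Oc \subset E$ with $m_E(\Oc) > 0$; I would take $\Oc = B_E$, the unit ball. The issue is that $\det(T_n | E) = m_{T_n E}(T_n B_E) / m_E(B_E)$ and the measure $m_{T_n E}$ lives on a \emph{moving} target subspace $T_n E$. I would handle this by transporting everything back to a fixed reference inner product space: since $\dim E = q$, fix \emph{any} inner product on $E$ and hence an isometric identification $E \cong \R^q$. Then $T_n B_E$ is the image of the Euclidean unit ball under the linear map $T_n|_E : \R^q \to T_n E \subset \Bc$. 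Using the induced-volume normalization (Definition \ref{defn:indVol}) and a change of variables, $m_{T_n E}(T_n B_E)$ can be written as $|{\det}_{\mathrm{eucl}}(U_n \circ T_n|_E)| \cdot c$, where $U_n$ is a linear isometry (in the $\|\cdot\|_{T_nE}$ sense from Proposition \ref{prop:johnThm}) of $T_n E$ onto $\R^q$ — but this still has the moving John ellipsoid in it. A cleaner route: use Proposition \ref{prop:approxSVD}(2) only for the \emph{formula} $\det(T|E) = \prod_i \|T v_i\|_{T E}$ with an honest SVD basis, and instead argue directly that the map $A \mapsto \det(A|E)$ is continuous by the following elementary observation. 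Pick any fixed basis $e_1, \dots, e_q$ of $E$. I claim $\det(A | E) = m_{AE}(A P[e_1,\dots,e_q]) / m_E(P[e_1,\dots,e_q])$ depends continuously on $A$, because $m_{AE}(A P[e_1,\dots,e_q])$ equals, up to the fixed constant $m_E(P[e_1,\dots,e_q])$ in the denominator, a quantity expressible via Lemma \ref{lem:sampleVol}'s exact inner-product analogue \eqref{eq:massForm}: choosing the John inner product on the $2q$-dimensional (or smaller) space $E + AE$...

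Here is the obstruction and the way around it. The genuine difficulty is precisely that $m_{AE}$ is defined intrinsically on $AE$ and there is no single ambient volume to compare against as $AE$ varies. The clean fix is to \emph{reduce to the finite-dimensional, fixed-space case}. Let $W \subset \Bc$ be \emph{any} fixed finite-dimensional subspace containing $E$ and all $T E$, $T_n E$ for large $n$ — such a $W$ of dimension at most $2q$ can be found once $n$ is large (since $T_n E \to T E$ in $\Gc_q(\Bc)$ by Proposition \ref{prop:grassProps} and $d_H$-closeness of $q$-planes lets us put $T_n E$ inside a fixed $(q+\text{something})$-dimensional space; more simply, $W := T E + \langle T_n e_i : i, n\rangle$ may be infinite-dimensional, so instead just work on $W_n := E + T_n E + TE$, $\dim W_n \le 3q$, and equip each $W_n$ with its John inner product, noting the John constant is bounded by $\sqrt{3q}$ uniformly). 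On $W_n$ the induced volume $m_{W_n}$ restricts (Lemma \ref{lem:indVol}, Items 2--3, and the block argument \eqref{eq:splitParaEst2}) to give $m_{T_n E}$ up to a constant $C_q$-controlled factor — but that factor itself moves, which reintroduces the problem.

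Given these circular difficulties, the approach I would actually commit to is the direct one via an honest SVD and Proposition \ref{prop:openCond}. \textbf{Step 1.} Fix an almost-orthonormal SVD basis $v_1, \dots, v_q$ of $E$ for the limit map $T$ (Proposition \ref{prop:approxSVD}(2)), so $\det(T|E) = \prod_{i=1}^q \|T v_i\|_{TE}$ exactly in the John inner product of $TE$; here I also use that along the SVD, $\det(T|E) \approx \prod |Tv_i|$ and — more importantly for continuity — that for the \emph{same} vectors $v_i$, $\det(T_n|E)$ and $\det(T|E)$ are both squeezed between $\prod_i d(\cdot,\cdot)$-type quantities by Lemma \ref{lem:sampleVol}, but with a \emph{common} $q$-dependent constant. \textbf{Step 2.} Apply Lemma \ref{lem:sampleVol} to both $m_E P[v_1,\dots,v_q]$ (a fixed positive number) and its images: $m_{T_nE} P[T_nv_1, \dots, T_nv_q] \approx |T_nv_q| \prod_{i<q} d(T_nv_i, \langle T_nv_j : i<j\rangle)$. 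Each term on the right is a continuous function of $T_n$ (norms $|T_n v_i| \to |Tv_i|$, and distances to spans of finitely many continuously-varying vectors are continuous, using that $T|_E$ injective $\Rightarrow T_n|_E$ injective for large $n$ by openness of injectivity on the finite-dimensional $E$ — which follows since $d(Tv_i, \langle Tv_j\rangle) > 0$ is an open condition). Hence $m_{T_nE} P[T_nv_1,\dots,T_nv_q] \to m_{TE} P[Tv_1,\dots,Tv_q]$ \emph{up to} the $\approx$ constants — and since $\approx$ with a fixed constant is preserved under limits only as a two-sided bound, not an equality, I instead note the sharper point: Lemma \ref{lem:sampleVol}'s inner-product analogue \eqref{eq:massForm} is an \emph{equality} in the John inner product, and the John inner product of $T_n E$ converges to that of $TE$ (the John ellipsoid depends continuously on the convex body $B_{T_n E}$, and $B_{T_n E} \to B_{TE}$ in $\Gc_q$). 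Therefore $m_{T_nE} P[T_nv_1,\dots,T_nv_q] = \|T_nv_q\|_{T_nE}\prod_{i<q} d_{T_nE}(\cdots) \to \|Tv_q\|_{TE}\prod d_{TE}(\cdots) = m_{TE}P[Tv_1,\dots]$ genuinely, giving $\det(T_n|E)\to\det(T|E)$. The case $T|_E$ not injective: then $V_q$... no — for Item (1) with $T|_E$ non-injective, $\det(T|E)=0$, and $\det(T_n|E) \le \prod_i |T_n v_i|$ stays bounded while some $d(T_nv_i,\langle T_nv_j\rangle) \to d(Tv_i,\langle Tv_j\rangle) = 0$, so $\det(T_n|E)\to 0$ by the Hadamard-type bound in Proposition \ref{prop:approxSVD}(1). \textbf{Step 3 (Item 2).} For the supremum: $\limsup_n V_q(T_n) \le V_q(T) + \text{error}$ and $\liminf \ge$. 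The $\ge$ direction is immediate from Item (1): for any $E$, $V_q(T_n) \ge \det(T_n|E) \to \det(T|E)$, so $\liminf_n V_q(T_n) \ge \sup_E \det(T|E) = V_q(T)$. The $\le$ direction: for each $n$ pick $E_n$ nearly attaining $V_q(T_n)$; by compactness of $\Gc_q$ restricted to subspaces with $|\pi| \le \sqrt q$... rather, use submultiplicativity: $V_q(T_n) = V_q((T_n - T) + T) $ does not split, so instead use the Lipschitz-type estimate $|\det(A|E) - \det(B|E)| \lesssim |A - B| \cdot (\max(|A|,|B|))^{q-1}$, uniform over all $q$-dimensional $E$ — this follows from the Hadamard bound Proposition \ref{prop:approxSVD}(1) and multilinearity of the parallelepiped volume, exactly as in the finite-dimensional case. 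Given this uniform estimate, $|V_q(T_n) - V_q(T)| \le \sup_E |\det(T_n|E) - \det(T|E)| \lesssim |T_n - T|\,(|T| + 1)^{q-1} \to 0$ for $n$ large, which proves Item (2) and in fact strengthens Item (1) to a uniform Lipschitz statement.

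The step I expect to be the main obstacle is establishing the \emph{uniform-over-all-$q$-planes} Lipschitz estimate $|\det(A|E) - \det(B|E)| \lesssim |A-B|\,(|A|\vee|B|)^{q-1}$ in Step 3, since the naive multilinear expansion is carried out in the varying John inner products of $AE$ and $BE$, and one must check the $q$-dependent constants do not degenerate as $E$ varies; the cleanest fix is to expand the parallelepiped volume $m_{AE}P[Av_1,\dots,Av_q]$ via Lemma \ref{lem:sampleVol} (with its uniform $C_q$) directly in terms of the $|\cdot|$-norm, replacing one $Av_i$ at a time by $Bv_i$ and bounding the resulting telescoping difference, which only ever invokes $\approx$ with the single constant $C_q$ from Lemma \ref{lem:sampleVol} and the elementary inequality $|\,|Av_i| - |Bv_i|\,| \le |A - B|$.
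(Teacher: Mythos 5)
Your proposal for Item (1) eventually converges on an approach (fix an SVD basis $v_1,\dots,v_q$ for $T|_E$, pass to the exact inner-product identity \eqref{eq:massForm} in the John inner products of $T_nE$ and $TE$, and invoke continuity of the John ellipsoid as a function of the convex body) that is genuinely different from the paper's. The paper instead fixes a topological complement $F$ to $TE$, uses \eqref{eq:imageSubComp} and Proposition \ref{prop:openCond} to see that $T_nE$ complements $F$ for large $n$, and then factors $\det(T_n|E)/\det(T|E)$ as a product $\det(\pi_{T_nE\parallel F}|TE)\cdot\det(\pi_{TE\parallel F}\circ T_n|E)/\det(T|E)$. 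The second factor is a ratio of determinants of maps between the \emph{fixed} spaces $E$ and $TE$, and so is handled by the elementary Claim \ref{cla:detContSingleSub}; the first factor is driven to $1$ by the projection-norm estimate \eqref{eq:graphNormEst}. The payoff of the paper's route is that it never needs any continuity property of the John ellipsoid assignment, which your approach takes for granted — continuity of $V\mapsto(\cdot,\cdot)_V$ in Hausdorff distance on $\Gc_q$ is plausible (by uniqueness of the minimal-volume ellipsoid plus a compactness argument) but is not proved anywhere in the paper and is not a one-liner. If you want to pursue your route you would have to supply that lemma.

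For Item (2) there is a genuine gap, and you already flag where it is. Your entire argument for $\limsup V_q(T_n)\le V_q(T)$ rests on the claimed uniform Lipschitz bound $\sup_E|\det(A|E)-\det(B|E)|\lesssim |A-B|\,(|A|\vee|B|)^{q-1}$, and the ``cleanest fix'' you propose cannot establish it. Lemma \ref{lem:sampleVol} gives only $\det(A|E)\approx X_A$ and $\det(B|E)\approx X_B$, where $X_A, X_B$ are products of norms and distances and $\approx$ carries a multiplicative constant $C_q>1$. Telescoping bounds $|X_A - X_B|$, but $\det(A|E)$ and $\det(B|E)$ are each only pinned down inside $[C_q^{-1}X,\, C_q X]$; even when $X_A=X_B$ exactly, the difference $|\det(A|E)-\det(B|E)|$ could a priori be as large as $(C_q - C_q^{-1})X_A$, which does not vanish as $|A-B|\to 0$. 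A genuine additive Lipschitz estimate simply cannot be squeezed out of a two-sided multiplicative comparison with a fixed constant bigger than one, and the $m_{AE}$ vs.\ $m_{BE}$ discrepancy (different normalizations on different image subspaces) is exactly what prevents the parallelepiped-volume telescoping from being carried out in a single measure space. The paper circumvents all of this: for $\limsup V_q(T_n)\le V_q(T)$ it chooses a near-optimal sequence $E_n$ for $T_n$, uses Corollary \ref{cor:minExpand} to bound $|(T_n|_{E_n})^{-1}|$ and $|(T|_{E_n})^{-1}|$ uniformly, thereby controls $d_H(T_nE_n,TE_n)$ uniformly via \eqref{eq:imageSubComp}, picks complements $F_n$ with $|\pi_{TE_n\parallel F_n}|\le\sqrt q$ via Lemma \ref{lem:compExist}, and runs the same two-factor decomposition as in Item (1) along the sequence to show $\det(T_n|E_n)/\det(T|E_n)\to 1$; the conclusion then follows from $\det(T|E_n)\le V_q(T)$. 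This is a qualitative convergence argument, not a quantitative Lipschitz bound, and that is the essential point you would need to change.
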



\begin{proof}
We begin by proving the following.

\begin{cla} \label{cla:detContSingleSub}
If $A_n, A : V \to V'$ are injective linear maps between $q$-dimensional normed vector spaces $V, V'$, and $A_n \to A$ in operator norm, then $\det(A_n | V) \to \det(A | V)$.
\end{cla}
\begin{proof}[Proof of Claim]
For two injective maps $A, B : V \to V'$, it is simple to show that
\[
B^{-1} A \big( B_V \big) \subset (1 + |B^{-1}| \cdot  |A - B|) B_V,
\]
where $B_V$ is the closed unit ball of $V$. Using Item 3 of Lemma \ref{lem:indVol}, it follows that $m_V \big( B^{-1} A (B_V) \big) \leq (1 + |B^{-1}| \cdot  |A - B|)^q$, and since $\det(B^{-1} A | V) = \omega_q^{-1} \cdot m_{V} \big( B^{-1} A (B_V) \big)$, it follows from the multiplicativity of the determinant as in Item 3 of Proposition \ref{prop:detProps} that
\[
\frac{\det(A | V)}{\det(B | V)} \leq (1 + |B^{-1}| \cdot |A - B|)^q.
\]
Exchanging the roles of $A$ and $B$ and taking a logarithm, we arrive at
\begin{equation}\label{eq:detContEst}
\bigg| \log  \frac{\det(A | V)}{\det(B | V)} \bigg| \leq q \log \bigg( 1 + \big( |A^{-1}| \vee |B^{-1}| \big) \cdot |A - B| \bigg),
\end{equation}
where $a \vee b := \max \{a, b\}$ for $a,b \in \R$. We will also make use of the elementary estimate
\begin{equation}\label{eq:invNormComp}
|B^{-1}| \leq \big( |A^{-1}|^{-1} - |A - B| \big)^{-1},
\end{equation}
which holds when the parenthetical quantity on the RHS of \eqref{eq:invNormComp} is nonnegative.

Applying \eqref{eq:detContEst} to the situation $B = A_n$, and taking $n$ sufficiently large so that $|A_n^{-1}| \leq 2 |A^{-1}|$, by \eqref{eq:invNormComp}, we see now that $|A_n - A| \to 0$ implies $|\log \det(A_n^{-1} \circ A | V)| \to 0$, i.e., $\det(A_n | V) \to \det(A | V)$, as desired.
\end{proof}

\noindent{\bf Proof of Item 1.} If $\det(T | E) = 0$, let $v \in E$ be a unit vector for which $T v = 0$. By Corollary \ref{cor:minExpand},
\[
\det(T_n | E) \leq C_q |T_n|_E|^{q-1} |T_n v|,
\]
where $C_q > 0$ depends on $q$ alone. The right hand side goes to zero as $n \to \infty$ when $T_n|_E \to T|_E$ in norm, and so $\det(T_n | E) \to 0$. So, from here on we assume $\det(T|E) \neq 0$.

To complete the proof of Item 1 in this case, one can compute using Item 1 of Lemma \ref{lem:apertureProps} that
\begin{align} \label{eq:imageSubComp}
d_H(T_n E, T E) \leq 2 \big( |(T|_E)^{-1}| \vee |(T_n|_E)^{-1}| \big) \cdot |T_n - T|.
\end{align}
By \eqref{eq:invNormComp}, for $n$ large enough, $|(T_n|_E)^{-1}| \leq 2 |(T|_E)^{-1}|$, hence $d_H(T_n E, T E) \to 0$ as $n \to \infty$.

It now follows by Proposition \ref{prop:openCond} that if $F \subset \Bc$ is a fixed topological complement to $T E$, then $T_n E$ is complemented to $F$ for $n$ sufficiently large. Since $\pi_{T_n E \ds F}|_{T E} \circ \pi_{T E \ds F}|_{T_n E} = \Id_{T_n E}$, the identity on $T_n E$, we may now decompose
\begin{align}\label{eq:detDecomp}
\frac{\det(T_n |E )}{\det(T | E)} = \det(\pi_{T_n E \ds F} | T E)\cdot  \bigg( \frac{ \det(\pi_{T E \ds F} \circ T_n | E) }{\det(T | E)} \bigg).
\end{align}
Note that the parenthetical term on the RHS of \eqref{eq:detDecomp} goes to $1$ as $n \to \infty$ by Claim \ref{cla:detContSingleSub}, because $T|_E = \pi_{T E \ds F} \circ T|_E$, so that $\pi_{T E \ds F} \circ T_n|_E \to T|_E$ in norm in $L(E, TE)$. So, we estimate the remaining term on the RHS of \eqref{eq:detDecomp}; however, since $T_n E \to T E$ in the Hausdorff distance, it follows by \eqref{eq:graphNormEst} that $\pi_{F \ds T_n E}|_{T E} \to 0$ in norm as $n \to \infty$. Because $\pi_{T_n E \ds F}|_{T E} = \Id_{T E} - \pi_{F \ds T_n E}|_{T E}$, it follows by a simple argument involving Lemma \ref{lem:indVol} that $\det(\pi_{T_n E \ds F} | T E) \to 1$, as desired.

\smallskip
{\bf Proof of Item 2.} If $V_q(T) = 0$, then an argument similar to the beginning of the proof of Item 1 lets us conclude $V_q(T_n) \to V_q(T)$. Hereafter we assume that $V_q(T) > 0$.

Note that 
\begin{align}\label{eq:VqlowerBound}
V_q(T) \leq \liminf_{n \to \infty} V_q(T_n)
\end{align}
follows immediately from Item 1: for any subspace $E \subset \Bc$ with dimension $q$, $\det(T | E) = \lim_{n \to \infty} \det(T_n | E) \leq \liminf_{n \to \infty} V_q(T_n)$. So, it suffices to show $\limsup_{n \to \infty} V_q(T_n) \leq V_q(T)$.

For each $n$, let $E_n \subset \Bc$ be a subspace of dimension $q$ for which $V_q(T_n) \leq (1 + \frac{1}{n} ) \det(T_n | E_n)$. We take $n$ large enough so that $V_q(T) \leq 2 V_q(T_n)$, using \eqref{eq:VqlowerBound}. Now, by Corollary \ref{cor:minExpand},
\[
V_q(T) \leq 2 V_q(T_n) \leq 2 \bigg(1 + \frac{1}{n} \bigg) \det(T_n | E_n) \leq 4 C_q |T_n|^{q-1} \cdot |(T_n |_{E_n})^{-1}|^{-1},
\]
where $C_q > 0$ depends on $q$ alone, and so taking $n$ large enough so that $|T_n| \leq 2 |T|$, we have that $|(T_n|_{E_n})^{-1}| \leq D$, where $D$ is a constant independent of $n$. Applying \eqref{eq:invNormComp}, we also obtain $|(T|_{E_n})^{-1}| \leq 2 D$ for $n$ sufficiently large.

Because we can bound $|(T_n|_{E_n})^{-1}|, |(T|_{E_n})^{-1}|$ from above independently of $n$, we can apply \eqref{eq:imageSubComp} to the subspace $E_n$ in place of $E$, obtaining $d_H(T_n E_n, T E_n) \to 0$ as $n \to \infty$. For each $n$, let $F_n$ be a complement to $E_n$ with $|\pi_{T E_n \ds F_n} | \leq \sqrt{q}$ (Lemma \ref{lem:compExist}). So, by Proposition \ref{prop:openCond}, we deduce that for $n$ sufficiently large, $T_n E_n$ complements $F_n$. We now decompose
\begin{align}\label{eq:detDecomp2}
\frac{\det(T_n | E_n)}{ \det(T | E_n)} = \det(\pi_{T_n E_n \ds F_n} | T E_n) \cdot \bigg( \frac{\det(\pi_{T E_n \ds F_n} \circ T_n |E_n)}{\det(T | E_n)}\bigg),
\end{align}
analogously to \eqref{eq:detDecomp}. The estimate \eqref{eq:detContEst} and the fact that $|(T_n|_{E_n})^{-1}|, |(T|_{E_n})^{-1}| \leq 2D$ imply that the parenthetical term on the RHS of \eqref{eq:detDecomp2} goes to $1$ as $n \to \infty$. The fact that $\pi_{F_n \ds T_n E_n}|_{T E_n} \to 0$ in norm (following from \eqref{eq:graphNormEst} and that $d_H(T_n E_n, T E_n) \to 0$) implies that the remaining RHS term of \eqref{eq:detDecomp2} goes to 1, analogously to the proof of Item 1. Therefore, the LHS of \eqref{eq:detDecomp2} goes to 1 as $n \to \infty$, and we conclude that $\limsup_{n \to \infty} V_q(T_n) \leq  V_q(T) $, as desired.
\end{proof}

\subsubsection{Global Geometry and the Induced Volume}

We will now discuss properties of the induced volume which depend on the Banach space at large. We will give a relation between $V_q$ and the Gelfand numbers, and a relation to the measure of noncompactness $|\cdot|_{\a}$.

\smallskip
\noindent{\bf The Gelfand Numbers and Maximal Volume Growth.}

For an operator $A$ on a Hilbert space, $V_q(A)$ (defined by \eqref{eq:maxVolGrow}) is equal to the product of the first $q$ singular values of $A$, i.e., the product of the highest $q$ eigenvalues of $\sqrt{A^* A}$ (Proposition 1.4 in Chapter V of \cite{Temam}).  As a consequence, one can think of the $q$-th Lyapunov exponent (with multiplicity) of a cocycle as the generic asymptotic growth rate of the $q$-th singular value of high iterates of the cocycle \cite{Ra, R1}. In this section, we will recover this identification for a suitable generalization of `singular value' to the Banach space setting, a fact to be put to use in Section 3.

The idea of `singular values' has been generalized in several nonequivalent ways for Banach spaces: for an account, see \cite{P}. The definition we shall employ here is that of the \emph{Gelfand numbers} $c_q(\cdot)$, $q \in \N$, defined for $A \in L(\Bc)$ by
\begin{align} \label{eq:gelfand}
c_q(A) = \inf\{|A|_R| : R \text{ closed, } \codim R = q-1\},
\end{align}
with $c_1(A) := |A|$ (Chapter 2, Section 4 of \cite{P}). Notice that $\{c_q(A)\}_q$ is a decreasing sequence, and so we define $c(A) := \lim_m c_m(A) = \inf_m c_m(A)$. Observe that $c(A) = \inf \{|A|_R| : R \text{ closed, } \codim R < \infty\}$; this is a different kind of measure of noncompactness from $|\cdot|_{\a}$ (2.4.10 in \cite{Akhmerov}), and as we will see in Lemma \ref{lem:gelfandNoncompact}, is equivalent to $|\cdot|_{\a}$.

We will now show that $V_q(A)$ is approximated by the product of the first $q$ Gelfand numbers $c_i(A)$.

\begin{lem}\label{lem:gelfandProps}
Let $q \in \N$. For any $A \in L(\Bc)$,
\begin{align}\label{eq:prodGelfand}
V_q(A) \approx c_{q}(A) V_{q-1}(A) .
\end{align}
where $\approx$ depends on $q$ alone.
\end{lem}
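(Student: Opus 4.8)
The plan is to prove the two inequalities $V_q(A) \lesssim c_q(A) V_{q-1}(A)$ and $V_q(A) \gtrsim c_q(A) V_{q-1}(A)$ separately, in each case extracting an almost orthonormal SVD-type basis from a nearly-optimal $q$-dimensional subspace via Proposition \ref{prop:approxSVD}, and relating the smallest ``singular direction'' to the Gelfand number $c_q(A)$. Recall $c_q(A) = \inf\{|A|_R| : \codim R = q-1\}$ and $V_{q-1}(A) = \sup\{\det(A|E') : \dim E' = q-1\}$.

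\medskip
\noindent\textbf{The upper bound $V_q(A) \lesssim c_q(A) V_{q-1}(A)$.} Fix $\e > 0$ and pick a $q$-dimensional subspace $E$ with $\det(A|E) \geq (1-\e) V_q(A)$; we may assume $A|_E$ is injective. Apply Proposition \ref{prop:approxSVD}(2) to get an almost orthonormal basis $v_1, \dots, v_q$ of $E$ with $\det(A|E) \approx \prod_{i=1}^q |A v_i|$, ordered so that $|A v_1| \geq \dots \geq |A v_q|$. Write $E' = \langle v_1, \dots, v_{q-1}\rangle$. Since $v_1, \dots, v_{q-1}$ is an almost orthonormal basis of $E'$ (being a subset of an almost orthonormal basis — this requires checking that the John inner product relations are inherited, which follows because orthogonality in $(\cdot,\cdot)_E$ restricts, though one must be slightly careful that $(\cdot,\cdot)_{E'}$ and $(\cdot,\cdot)_E|_{E'}$ give comparable volumes; this is where a dimension-dependent constant enters), the Hadamard bound (Proposition \ref{prop:approxSVD}(1)) gives $\prod_{i=1}^{q-1} |A v_i| \lesssim \det(A|E') \leq V_{q-1}(A)$. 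It remains to bound $|A v_q|$ by $c_q(A)$ up to a constant. Given any closed $R$ with $\codim R = q-1$, the subspace $E \cap R$ has dimension at least $1$, so pick a unit vector $w \in E \cap R$; then $|Aw| \leq |A|_R|$. Expanding $w = \sum c_i v_i$ in the almost orthonormal basis, one has $|c_i| \lesssim 1$ for all $i$ (from \eqref{eq:johnThmBound} applied in $E$), and $w \perp_{(\cdot,\cdot)_E}$ is not available for free — instead, choose $R$ more cleverly: let $R$ be (a closed complement containing) the annihilator data so that $E \cap R = \langle v_q \rangle$, which is possible since $v_1,\dots,v_{q-1}$ span a $(q-1)$-dimensional space whose ``coordinate functionals'' extend to $\Bc^*$ by Hahn--Banach. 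Then $|A v_q| \lesssim |A|_R| $, and taking the infimum over such $R$ gives $|A v_q| \lesssim c_q(A)$. Combining: $V_q(A) \lesssim \prod_{i=1}^q |A v_i| = |A v_q| \prod_{i=1}^{q-1}|A v_i| \lesssim c_q(A) V_{q-1}(A)$, and letting $\e \to 0$ finishes this direction.

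\medskip
\noindent\textbf{The lower bound $V_q(A) \gtrsim c_q(A) V_{q-1}(A)$.} Fix $\e > 0$ and choose a $(q-1)$-dimensional $E'$ with $\det(A|E') \geq (1-\e)V_{q-1}(A)$, with $A|_{E'}$ injective, and an almost orthonormal basis $v_1,\dots,v_{q-1}$ with $\det(A|E') \approx \prod_{i=1}^{q-1}|A v_i|$. For the Gelfand number: for \emph{every} closed $R$ with $\codim R = q-1$ we have $|A|_R| \geq c_q(A)$, so in particular this holds, but we want a vector achieving near-equality. The definition gives $c_q(A) = \inf_R |A|_R|$, so for any $\eta > 0$ there is $R$ with $|A|_R| \leq (1+\eta) c_q(A)$ — that is the wrong direction. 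Instead use: for \emph{any} $R_0$ with $\codim R_0 = q-1$ and any unit $u \in R_0$, $|Au| \leq |A|_{R_0}|$; but also, for the specific $R_0$ chosen as a complement to $E'$ with good angle (Lemma \ref{lem:compExist} gives $|\pi_{E' \ds R_0}| \leq \sqrt{q-1}$, hence such $R_0$ of codimension $q-1$... actually $R_0$ here is a \emph{complement} so has the wrong codimension). The right move: pick any unit vector $u$ with $|Au| \geq \tfrac12 \sup_{u' \in R_0, |u'|=1}|Au'|$ where $R_0$ ranges to make $\sup$ close to $c_q(A)$ from above is again backwards. Resolve this by instead lower-bounding: since $c_q(A) = \inf_R |A|_R|$, pick $R$ with $|A|_R|$ within a factor $2$ of $c_q(A)$; then for \emph{every} unit $u \in R$, $|Au| \leq 2 c_q(A)$, and we need a unit $u \in R$ with $E' + \langle u\rangle$ direct and $|Au|$ \emph{not too small} — but we actually want $|A$ on $E' \oplus \langle u\rangle|$ to be $\gtrsim c_q(A) V_{q-1}(A)$, and for the block determinant estimate (Lemma \ref{lem:detSplit}) the relevant quantity is $\det(A|\langle u\rangle) = |Au|$, which we need $\gtrsim c_q(A)$. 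So choose $R$ with $\codim R = q-1$, $|A|_R|$ within factor $2$ of $c_q(A)$, and (by dimension count, since $\dim(\text{span of coords of }E') < \infty$) a unit $u \in R$ with $E' \oplus \langle u\rangle$ direct, $|\pi_{E' \ds \langle u\rangle}| \leq \sqrt q$ (Lemma \ref{lem:compExist}), and $|Au| \geq \tfrac12 \sup\{|Au'| : u' \in R, |u'| = 1, \text{same constraints}\} \geq \tfrac12 \cdot (\text{something} \gtrsim c_q(A))$ — the last bound because $\sup$ over a finite-codimension-restricted unit sphere of $|Au'|$ is comparable to $|A|_R|$ hence to $c_q(A)$. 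Then Lemma \ref{lem:detSplit} with $V = E' \oplus \langle u\rangle$, $E = E'$, $F = \langle u\rangle$, $k = q-1$ gives $\det(A|V) \gtrsim |\pi_{E' \ds \langle u\rangle}|^{-(q-1)} \det(A|E')\det(A|\langle u\rangle) \gtrsim c_q(A) V_{q-1}(A)$, hence $V_q(A) \gtrsim c_q(A) V_{q-1}(A)$; let $\e, \eta \to 0$.

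\medskip
\noindent\textbf{Main obstacle.} The delicate point, in both directions, is matching the ``smallest singular direction'' of an almost-optimal subspace with the Gelfand number: the Gelfand number is an infimum over \emph{all} codimension-$(q-1)$ subspaces, so extracting from it a single usable unit vector with prescribed angular/independence constraints relative to a fixed $(q-1)$-dimensional subspace requires a careful Hahn--Banach and dimension-counting argument, together with the uniform complement bounds of Lemma \ref{lem:compExist}, to keep all constants depending on $q$ alone. I expect this bookkeeping — and verifying that restricting an almost orthonormal basis to a sub-collection stays ``almost orthonormal'' with controlled volume distortion — to be the bulk of the work; the volume manipulations themselves are routine given Proposition \ref{prop:approxSVD} and Lemma \ref{lem:detSplit}.
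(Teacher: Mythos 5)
Both directions of your proposal have problems, and the crux in each case is exactly what your own asides flag but do not resolve.

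In the upper bound, your opening move (intersect a near-optimal $q$-dimensional $V$ with a near-optimal codimension-$(q-1)$ subspace $R$ to get a unit $w\in V\cap R$ with $|Aw|\le|A|_R|\lesssim c_q(A)$) was the right one, and you should not have abandoned it. Your ``clever'' replacement --- choosing $R$ so that $E\cap R=\langle v_q\rangle$ and then ``taking the infimum over such $R$'' --- is logically backwards: the infimum of $|A|_R|$ over that \emph{constrained} family of subspaces is bounded \emph{below} by $c_q(A)$, so $|Av_q|\le\inf_{\text{such }R}|A|_R|$ gives no upper bound in terms of $c_q(A)$. The fix along your lines would be to keep the near-optimal $R$, note that $|Av_q|\lesssim|Av|$ for every unit $v\in V$ (from the almost-orthonormal SVD structure), and apply this to $v=w$. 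But the paper's proof is cleaner and skips the SVD entirely: pick unit $w\in V\cap R$, split $V=V_0\oplus\langle w\rangle$ with $|\pi_{\langle w\rangle\ds V_0}|=1$ via Lemma \ref{lem:compExist}, and apply the block-determinant upper bound (Lemma \ref{lem:detSplit}) to get $V_q(A)\lesssim\det(A|V_0)\cdot|Aw|\le V_{q-1}(A)\cdot 2c_q(A)$. (Also, your citation of the ``Hadamard bound'' for $\prod_{i<q}|Av_i|\lesssim\det(A|E')$ is inverted: Hadamard gives $\det\lesssim\prod$; the $\approx$ you actually need is the SVD identity, Item (2) of Proposition \ref{prop:approxSVD}, re-derived on the restricted subspace.)

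The lower bound has a genuine gap. To apply the lower half of Lemma \ref{lem:detSplit} with $V=E'\oplus\langle u\rangle$ you need control of the \emph{image} angle $|\pi_{AE'\ds A\langle u\rangle}|$, not the domain angle $|\pi_{E'\ds\langle u\rangle}|$ you invoke via Lemma \ref{lem:compExist}; $A$ can distort angles arbitrarily, so the latter says nothing about the former. Choosing $u$ from a \emph{near-optimal} $R$ (small $|A|_R|$) provides no such control. The missing idea is Lemma \ref{lem:splittingExist}: choose a codimension-$(q-1)$ complement $F_2$ to $AE'$ with $|\pi_{AE'\ds F_2}|\le\sqrt{q-1}$, and set $F_1=\{v:Av\in F_2\}$, which again has codimension $q-1$. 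Then for any unit $w\in F_1$ one has $A\langle w\rangle\subset F_2$, so $|\pi_{AE'\ds A\langle w\rangle}|\le|\pi_{AE'\ds F_2}|$ is controlled, and the block-determinant estimate yields $V_q(A)\gtrsim V_{q-1}(A)\cdot|Aw|$; taking the supremum over such $w$ gives $V_q(A)\gtrsim V_{q-1}(A)\cdot|A|_{F_1}|\ge V_{q-1}(A)\cdot c_q(A)$. Note the point you repeatedly tripped over: here you do \emph{not} want a near-optimal $R$. You want a specially constructed codimension-$(q-1)$ subspace for which the geometry is controlled, and then $|A|_{F_1}|\ge c_q(A)$ comes for free from the definition of the Gelfand number.
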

\begin{proof}
We prove $\lesssim, \gtrsim$ separately. Assume $V_{q-1}(A) > 0$, since otherwise, $A$ is a finite rank operator of rank $\leq q - 2$ and \eqref{eq:prodGelfand} is trivial.

To prove $\gtrsim$, let $V \subset \Bc$ be a subspace of dimension $q$ for which $\det(A | V) \geq \frac{1}{2} V_q(A)$, and let $R \subset \Bc$ have codimension $q-1$ such that $|A|_R| \leq 2 c_q(A)$.

 Note that $\dim R \cap V \geq 1$ by the definition of codimension; fix $w \in R \cap V$ with $|w| = 1$. Let $V_0 \subset V$ be a complement to $\langle w \rangle$ in $V$ for which $|\pi_{\langle w \rangle \ds V}| = 1$ (by Lemma \ref{lem:compExist}). We now estimate using the `block determinant' estimate in Lemma \ref{lem:detSplit}:
\[
V_q(A) \leq 2 \det(A | V) \lesssim \det(A | V_0) \det(A | \langle w \rangle) \leq V_{q - 1}(A) |A w| \leq 2 V_{q - 1}(A) c_q(A),
\]
which is what we wanted.

To prove $\lesssim$, let $E \subset \Bc$ be a subspace of dimension $q-1$ for which $\det(A | E) \geq \frac{1}{2} V_{q-1}(A)$.  Now, $A E$ has dimension $q - 1$ (from $V_{q - 1}(A) > 0$), so by Lemma \ref{lem:compExist}, we can find a $(q-1)$-codimensional complement $F_2 \subset \Bc$ for which $|\pi_{A E \ds F_2}| \leq \sqrt{q-1}$. By Lemma \ref{lem:splittingExist}, $F_1 := \{v \in \Bc : A v \in F_2\}$ has codimension $q-1$.

Let $w \in F_1, |w| = 1$. Using the lower bound in Lemma \ref{lem:detSplit}, we estimate
\[
V_q(A) \geq \det(A | E \oplus \langle w \rangle) \gtrsim \det(A | E) \det(A | \langle w \rangle) \geq \frac{1}{2} V_{q -1 }(A) |A w|
\]
This inequality holds for any $w \in F_1, |w| = 1$, and so implies an upper bound on $|A|_{F_1}|$, which is $\geq c_q(A)$ by definition. This completes the estimate.
\end{proof}

\smallskip
\noindent{\bf Measures of Noncompactness and Maximal Volume Growth.}

Like singular values of operators in Hilbert space, the Gelfand numbers can detect compactness and estimate the measure of noncompactness $|\cdot|_{\a}$ in the following sense. 

\begin{lem}[2.5.5 in \cite{Akhmerov}]\label{lem:gelfandNoncompact}
Let $A \in L(\Bc)$. Then,
\[
\frac{1}{2} |A|_{\a} \leq c(A) \leq 2 |A|_{\a}.
\]
\end{lem}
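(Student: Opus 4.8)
The goal is to prove $\tfrac12 |A|_\alpha \leq c(A) \leq 2|A|_\alpha$, where $c(A) = \inf\{|A|_R| : R \text{ closed}, \codim R < \infty\}$. Although the statement is cited from \cite{Akhmerov}, here is the argument I would write to make the excerpt self-contained.

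\medskip

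\textbf{Plan.} The plan is to prove the two inequalities separately using covering/net arguments relating the measure of noncompactness of $AB_\Bc$ to the norms of restrictions of $A$ to finite-codimensional subspaces. Recall $|A|_\alpha = q(AB_\Bc)$ is the infimal radius $r$ so that $AB_\Bc$ is covered by finitely many $r$-balls.

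\medskip

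\textbf{Upper bound $c(A) \leq 2|A|_\alpha$.} Fix $r > |A|_\alpha$, so there exist $y_1,\dots,y_N \in \Bc$ with $AB_\Bc \subset \bigcup_{i=1}^N B(y_i, r)$. For each $i$, choose $l_i \in \Bc^*$ with $|l_i| \le 1$ (or $l_i = 0$ if $y_i = 0$) such that $l_i(y_i) = |y_i|$, and let $R = \bigcap_{i=1}^N \ker l_i$, a closed subspace of codimension $\le N$. I claim $|A|_R| \le 2r$: given $v \in B_R$, pick $i$ with $|Av - y_i| \le r$; then $|l_i(Av - y_i)| \le r$, but $l_i(Av) = 0$ since $v \in R$, so $|y_i| = |l_i(y_i)| \le r$, hence $|Av| \le |y_i| + r \le 2r$. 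Thus $c(A) \le |A|_R| \le 2r$, and letting $r \downarrow |A|_\alpha$ gives $c(A) \le 2|A|_\alpha$.

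\medskip

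\textbf{Lower bound $\tfrac12|A|_\alpha \leq c(A)$.} Fix any closed $R$ with $\codim R = k < \infty$ and let $E$ be a topological complement to $R$ with $\Bc = E \oplus R$; write $\pi = \pi_{R \ds E}$ for the projection onto $R$. For $v \in B_\Bc$, decompose $Av = A\pi v + A(\Id - \pi)v$; here $(\Id-\pi)v$ lies in the finite-dimensional space $E$ and has norm bounded by $|\Id - \pi| \le 1 + |\pi|$, so $A(\Id-\pi)v$ ranges over a bounded subset of the finite-dimensional space $AE$, which is totally bounded. Meanwhile $\pi v \in R$ with $|\pi v| \le |\pi|$, so $|A\pi v| \le |\pi|\cdot|A|_R|$. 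Hence $AB_\Bc$ is contained in the $|\pi|\cdot|A|_R|$-neighborhood of the totally bounded (hence precompact) set $A(B_\Bc \cap (1+|\pi|)B_E)$, giving $|A|_\alpha \le |\pi|\cdot |A|_R|$. Using Lemma \ref{lem:compExist}, $E$ may be chosen with $|\pi| = |\pi_{R \ds E}| \le \sqrt{k} + 2$; this is not yet sharp enough for the constant $2$. To get the clean constant, instead pass to iterates or refine: the key observation is that $|\cdot|_\alpha$ is insensitive to adding finite-rank pieces, so $|A|_\alpha = |A - A(\Id - \pi)|_\alpha \le |A\pi|_\alpha \le |A\pi|$, and $|A\pi| = |A|_R \circ \pi|$ — but one needs $|A\pi|$ comparable to $|A|_R|$. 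Choosing $E$ via Lemma \ref{lem:compExist} so that $\sin\theta(R,E) \ge (\sqrt{k}+2)^{-1}$ gives only $|A|_\alpha \le (\sqrt k + 2)|A|_R|$; to reach the factor $2$ one uses instead that the measure of noncompactness can be computed after discarding \emph{any} precompact set, and optimizes over a minimizing sequence of subspaces $R_n$ with $|A|_{R_n}| \to c(A)$ together with the submultiplicativity $|A|_\alpha \le |P|_\alpha |A|$ for suitable near-projections $P$ of norm close to $1$; I will cite \cite[2.5.5]{Akhmerov} for the sharp constant and record the above as the conceptual content.

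\medskip

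\textbf{Main obstacle.} The genuinely easy direction is $c(A) \le 2|A|_\alpha$ via Hahn--Banach functionals killing the centers of a finite cover. The harder direction is extracting the constant $2$ (as opposed to $\sqrt{k}+2$) in $|A|_\alpha \le 2 c(A)$: the naive complement-and-project argument loses a dimension-dependent factor because Banach-space projections onto finite-codimensional subspaces cannot be taken of norm close to $1$. The resolution is that one does not need a single good complement — one only needs that the part of $AB_\Bc$ not controlled by $|A|_R|$ lives in a finite-dimensional (hence totally bounded) set, and then a careful choice along a minimizing sequence, as in \cite{Akhmerov}, yields the factor $2$.
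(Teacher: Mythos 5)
The paper cites this lemma from Akhmerov et al.\ without reproducing a proof, so there is no in-paper argument to compare against; what follows is a direct assessment of your attempt.

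Your upper bound contains a genuine slip. You set $R = \bigcap_i \ker l_i$ and then assert ``$l_i(Av) = 0$ since $v \in R$'', but $v \in R$ only gives $l_i(v) = 0$, not $l_i(Av) = 0$. You need instead $R = \bigcap_i \ker(l_i \circ A) = A^{-1}\bigl(\bigcap_i \ker l_i\bigr)$, which is still closed of codimension $\le N$, and with this fix the argument is correct: for $v \in B_R$ and $i$ with $|Av - y_i| \le r$, since $l_i(Av) = 0$ one gets $|y_i| = |l_i(y_i - Av)| \le r$ and hence $|Av| \le |Av - y_i| + |y_i| \le 2r$. That is the standard argument.

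Your lower bound is not proved, and the obstacle you diagnose is real: the complement-and-project route yields a factor $|\pi_{R\ds E}|$, which Lemma~\ref{lem:compExist} controls only by $\sqrt{k}+2$. However, the remedy you gesture at (subtracting finite-rank pieces, near-projections of norm close to $1$, optimizing over minimizing sequences) does not close the gap, and as written there is no proof of the constant $2$. The clean resolution avoids complements and projections entirely by working in the quotient $\Bc/R$, using that cosets admit representatives of norm arbitrarily close to the quotient norm. Concretely: fix a closed $R$ with $\codim R < \infty$ and $\epsilon > 0$. The unit ball of the finite-dimensional space $\Bc/R$ has a finite $\epsilon$-net $\{v_1 + R, \dots, v_N + R\}$, and the representatives may be chosen with $|v_j| \le 1 + \epsilon$. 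Given $v \in B_\Bc$, pick $j$ with $\|(v - v_j) + R\|_{\Bc/R} < \epsilon$ and then $r \in R$ with $|v - v_j - r| < \epsilon$; then $|r| \le |v| + |v_j| + \epsilon \le 2 + 2\epsilon$, so
\[
|Av - Av_j| \le |A(v - v_j - r)| + |Ar| \le \epsilon |A| + (2 + 2\epsilon)\,|A|_R| \, .
\]
Thus $A B_\Bc$ is covered by $N$ balls of radius $\epsilon |A| + (2+2\epsilon)|A|_R|$; letting $\epsilon \to 0$ gives $|A|_\alpha \le 2\,|A|_R|$, and taking the infimum over $R$ gives $|A|_\alpha \le 2\,c(A)$. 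The factor $2$ comes from the norm $|r| \le 2 + 2\epsilon$ of the representative, not from any projection norm, which is precisely why this route succeeds where yours stalls.
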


The maximal volume growths $V_q$ also have a relation to $|\cdot|_{\a}$. 
\begin{prop} \label{lem:maxVolNoncompact}
Let $A \in L(\Bc)$. Using the convention $\log 0 =  - \infty$, we have that
\begin{align}\label{eq:VQlimit}
\limsup_{q \to \infty} \frac{1}{q} \log V_q(A) \leq \log ( 2 |A|_{\a}).
\end{align}
\end{prop}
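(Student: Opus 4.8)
The plan is to combine the recursive relation $V_q(A) \approx c_q(A) V_{q-1}(A)$ from Lemma~\ref{lem:gelfandProps} with the asymptotic comparison between the Gelfand numbers and the measure of noncompactness from Lemmas~\ref{lem:gelfandNoncompact} and the monotone convergence $c_q(A) \to c(A)$. Iterating the recursion down to $q=1$ (where $V_1(A) = |A| = c_1(A)$) gives
\[
V_q(A) \approx \prod_{i=1}^q c_i(A),
\]
but one must be careful: the implied constant in $\approx$ in Lemma~\ref{lem:gelfandProps} depends on $q$, so iterating $q$ times naively produces a constant that could grow with $q$ and spoil the $\frac1q\log$ limit. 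So the first real step is to extract a \emph{uniform} form of the one-step estimate, namely an upper bound $V_q(A) \le C\, c_q(A) V_{q-1}(A)$ with $C$ \emph{independent of $q$} — inspecting the proof of Lemma~\ref{lem:gelfandProps}, the constant in the $\lesssim$ direction comes from the block-determinant estimate in Lemma~\ref{lem:detSplit} applied to a one-dimensional complement, so the relevant exponent is $k=1$ and the constant is $C_q^{(1)}$; one needs to check (or arrange) that these are bounded in $q$, or alternatively bound $\frac1q\sum_{i=1}^q \log C_i \to 0$ if the constants grow subexponentially. This bookkeeping is the main obstacle.

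Granting a bound of the form $\log V_q(A) \le \sum_{i=1}^q \log c_i(A) + o(q)$, the rest is a Cesàro / Stolz argument. Since $\{c_i(A)\}_i$ is decreasing with limit $c(A)$, for any $\varepsilon > 0$ there is $N$ with $c_i(A) \le c(A) + \varepsilon$ for all $i \ge N$ (interpreting appropriately when $c(A)=0$: then $c_i(A) \le \varepsilon$ for $i\ge N$, and one should first reduce to the case $c(A)>0$ or handle $c(A)=0$ by noting $\frac1q\log V_q \le \frac1q(\text{const} + (q-N)\log\varepsilon) \to \log\varepsilon$ and let $\varepsilon\to 0$). Then
\[
\frac{1}{q}\log V_q(A) \le \frac{1}{q}\Big(\sum_{i=1}^{N-1}\log c_i(A)\Big) + \frac{q-N+1}{q}\log\big(c(A)+\varepsilon\big) + o(1),
\]
and taking $\limsup_{q\to\infty}$ kills the first term and sends the coefficient in the second to $1$, yielding $\limsup_q \frac1q \log V_q(A) \le \log(c(A)+\varepsilon)$. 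Letting $\varepsilon \downarrow 0$ gives $\limsup_q \frac1q\log V_q(A) \le \log c(A)$.

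Finally, apply Lemma~\ref{lem:gelfandNoncompact}, which gives $c(A) \le 2|A|_\alpha$, hence $\log c(A) \le \log(2|A|_\alpha)$, which is exactly \eqref{eq:VQlimit}. I would also note the degenerate case where $A$ has finite rank $r$: then $V_q(A)=0$ for $q>r$, so the left side is $-\infty$ and the inequality holds trivially — this is consistent with $c(A)=0=|A|_\alpha$ in that case. The only genuinely delicate point, to repeat, is controlling the $q$-dependence of the constants when iterating Lemma~\ref{lem:gelfandProps}; if those constants are not uniformly bounded, one should instead prove directly a uniform one-step bound $V_q(A) \le C\, c_q(A)\, V_{q-1}(A)$ by choosing, as in the $\lesssim$ half of the proof of Lemma~\ref{lem:gelfandProps}, a codimension-$(q-1)$ subspace $F_1$ with $|A|_{F_1}|$ close to $c_q(A)$ and a $(q-1)$-dimensional $E$ nearly realizing $V_{q-1}(A)$, and estimating $\det(A\,|\,E\oplus\langle w\rangle)$ via Lemma~\ref{lem:detSplit} with block sizes $q-1$ and $1$ — the exponent $k=1$ there is what makes the constant depend only on a fixed split dimension and thus be taken uniform in $q$.
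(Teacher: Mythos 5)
Your approach is genuinely different from the paper's, and the difficulty you flagged is exactly where it breaks down. You want to iterate the one-step bound $V_q(A) \lesssim c_q(A)\,V_{q-1}(A)$ from Lemma~\ref{lem:gelfandProps} and then run a Ces\`aro argument on $\prod_i c_i(A)$. But the hidden constant $C_q$ in Lemma~\ref{lem:gelfandProps} comes from Lemma~\ref{lem:detSplit}, and the constant in Lemma~\ref{lem:detSplit} depends on $q = \dim V$, not on the block size $k$: the exponent $k$ sits on $|\pi_{E\,/ \! /\, F}|$, but the implicit constant in $\lesssim$ is the $C_q$ of the subsection's convention, traceable to the John's-theorem distortion $\|\cdot\|_V \approx |\cdot|$ on a $q$-dimensional space, which costs factors of $\sqrt{q}$ to the $q$-th power when comparing volumes (see the step producing \eqref{eq:paraMassSplit}). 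So even with $k=1$, the one-step constant grows like $q^{cq}$. Iterating, $\tfrac{1}{q}\sum_{i=1}^q \log C_i$ diverges, and the error term does not vanish in $\limsup_q \tfrac{1}{q}\log V_q(A)$. Your fallback suggestion---that taking $k=1$ in Lemma~\ref{lem:detSplit} gives a constant ``depending only on a fixed split dimension''---is a misreading of that lemma and does not close the gap; neither would it suffice to have the $C_q$ merely subexponential in $q$, since one needs the Ces\`aro average of $\log C_q$ to tend to zero.

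The paper sidesteps this entirely: it never iterates Lemma~\ref{lem:gelfandProps}. Instead it fixes $r > |A|_\alpha$, covers $A(B(0,1))$ by finitely many balls $B(x_i,r)$ (with a count $C_r$ independent of $q$), picks a near-optimal $q$-dimensional $E_q$, and bounds $m_{AE_q}(A B_{E_q}) \le C_r (2r)^q \omega_q$ using the elementary Claim~\ref{cla:ballVolEst} that $m_E(E\cap B(x,r)) \le (2r)^q\omega_q$. This yields $V_q(A) \le 2 C_r (2r)^q$, and the fixed prefactor $2C_r$ is annihilated by $\tfrac{1}{q}\log$; sending $r\downarrow |A|_\alpha$ finishes. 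The crucial structural difference is that the only $q$-dependent quantity in the paper's bound is the clean factor $(2r)^q$, whereas your approach accumulates a $q$-dependent distortion constant at every step of the recursion. If you want to salvage a Gelfand-number route, you would need to prove a one-step inequality $V_q(A) \le C\, c_q(A)\, V_{q-1}(A)$ with $C$ genuinely independent of $q$, which is not supplied by Lemma~\ref{lem:detSplit} as proved in the paper.
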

\begin{proof}
For $x \in \Bc$ and $r > 0$, we denote $B(x, r) = \{v \in \Bc : |v - x| < r\}$.

If $A$ is an operator of finite rank, then \eqref{eq:VQlimit} holds vacuously, and so we can assume without loss that $V_q(A) > 0$ for all $q \in \N$.

Fix $r > |A|_{\a}$ and let $\{B(x_i, r)\}_{i = 1}^{C_r}$ be a finite cover of $A\big( B(0,1)\big)$ by balls of radius $r$ centered at points $x_i \in \Bc$. For each $q \in \N$, let $E_q \subset \Bc$ be a $q$-dimensional subspace for which $V_q(A) \leq 2 \det(A | E_q)$. Writing $A E_q = E'_q$, note that $\dim E'_q = q$, and that
\[
m_{E'_q} \bigg( E'_q \cap A \big( B(0,1) \big) \bigg) = m_{E'_q} \big( A (B_{E_q}) \big) \leq  \sum_{i = 1}^{C_r} m_{E'_q} \big( E'_q \cap B(x_i, r) \big).
\]
As one can easily check, for Hilbert spaces we have that $m_{E'_q} \big( E'_q \cap B(x_i, r) \big) \leq  r^q \omega_q$, with equality when $x_i \in E'_q$. For Banach spaces, the following can be recovered.
\begin{cla}\label{cla:ballVolEst}
Let $E \subset \Bc$ be a $q$-dimensional subspace with $\dim E = q < \infty$. For any $x \in \Bc$, we have that $m_E \big( E \cap B(x, r) \big) \leq (2 r)^q \omega_q$.
\end{cla}
From Claim \ref{cla:ballVolEst}, we obtain
\[
m_{E_q'} \big( E_q' \cap A B(0,1)\big) \leq C_r (2 r)^q \omega_q,
\]
hence $V_q(A) \leq 2 \det(A | E_q) \leq  2C_r (2 r)^q$. This implies \eqref{eq:VQlimit}.
\end{proof}

\begin{proof}[Proof of Claim \ref{cla:ballVolEst}]
Let $F$ be a topological complement to $E$ in $\Bc$ (Lemma \ref{lem:compExist}), and decompose $x = e + f$. If $f = 0$, then $x \in E$ and Claim \ref{cla:ballVolEst} is obvious: so, we may assume $f \neq 0$. Moreover, without loss, we can take $e = 0$ by the the translation invariance of $m_E$ on $E$.

Applying the Riesz Lemma (Lemma 4.7 of \S 4.1 in \cite{S}) to $E \subset E \oplus \langle f \rangle$, there exists a unit vector $g \in E \oplus \langle f \rangle$ for which $d(g, E) = 1$; equivalently, $|\pi_{\langle g \rangle \ds E}| = 1$ (Lemma \ref{lem:angleForm}).

Writing $f = e_0 + a g$, with $a \in \R$, $e_0 \in E$, we will show that $B(f, r) \cap E \subset B(e_0, 2 r)$, which implies Claim \ref{cla:ballVolEst}. To see this, we estimate: if $v \in B( f, r) \cap E$, then $|v - f| < r$, and so
\[
|v - e_0| = |\pi_{E \ds \langle g \rangle} (v - (e_0 + a g))| \leq |\pi_{E \ds \langle g \rangle}| \cdot  |v - f| < 2 r,
\]
which is what we wanted.
\end{proof}

\section{Lyapunov Exponents for Banach Space Cocycles}

In this section, we prove our main result, Theorem \ref{thm:volMET}, by emulating Ruelle's proof in \cite{R1,R2}. We will begin by using the measurability assumption \eqref{eq:measCondition} and Lemma \ref{lem:measDet} to obtain the Lyapunov exponents and show they may accumulate only at the asymptotic exponential growth rate $l_{\a}$ of $|T^n_x|_{\a}$. Then, we will state our primary tool, Proposition \ref{prop:staticMET}, which should be thought of as a `trajectory-wise' version of the MET which extracts the `slow' growing subspace corresponding to the second Lyapunov exponent, and show using an induction procedure (Lemma \ref{lem:lyapInduct}) how to complete the proof. Remaining at that point will be to prove Proposition \ref{prop:staticMET} and Lemma \ref{lem:lyapInduct}, and to prove the volume growth \eqref{eq:correctVolGrow} in Theorem \ref{thm:volMET}, which we formulate as Lemma \ref{lem:rightVolGrow}.

\subsection{Lyapunov Exponents for $T$}

In this section, we find the Lyapunov exponents for the cocycle $T$ using growth rates, and prove their basic properties. To begin, the following is an immediate consequence of Item 2 in Lemma \ref{lem:measDet}.
\begin{cor} \label{cor:volGrowMeas}
Assume that the cocycle $T : X \to L(\Bc)$ satisfies the measurability hypothesis \eqref{eq:measCondition}. Then, for any $n, q \geq 1$, the map $x \mapsto V_q(T^n_x)$ is measurable as a map $(X, \Fc) \to (\R, \operatorname{Bor}(\R))$.
\end{cor}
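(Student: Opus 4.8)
The plan is to reduce the claim to Item 2 of Lemma \ref{lem:measDet}, which asserts that $V_q : L(\Bc) \to [0,\infty)$ is sequentially continuous for the operator norm (note $V_q(A) \le C_q |A|^q < \infty$ by the Hadamard bound, so this is a genuine scalar function). Since a pointwise limit of measurable scalar functions is measurable, it suffices to exhibit, for each fixed $n$ and $q$, a sequence of measurable, finite-valued maps $S^{(m)} : X \to L(\Bc)$ with $S^{(m)}_x \to T^n_x$ in operator norm as $m \to \infty$ for every $x \in X$: then each $x \mapsto V_q(S^{(m)}_x)$ is measurable (indeed finite-valued, being the composition of a measurable simple function with $V_q$), and hence so is $V_q(T^n_x) = \lim_{m \to \infty} V_q(S^{(m)}_x)$.

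To build such approximants, recall that \eqref{eq:measCondition} supplies measurable, finite-valued maps $T^{(m)} : X \to L(\Bc)$ with $T^{(m)}_y \to T_y$ in norm for each $y \in X$. Since $f$ is measurable, $x \mapsto f^j x$ is measurable for $0 \le j \le n-1$, so each $x \mapsto T^{(m)}_{f^j x}$ is again measurable and finite-valued. Set
\[
S^{(m)}_x := T^{(m)}_{f^{n-1} x} \circ \cdots \circ T^{(m)}_{f x} \circ T^{(m)}_x .
\]
As the product of finitely many finite sets is finite and composition of operators is Borel, the map $S^{(m)}$ is measurable and finite-valued: on each of the finitely many measurable cells of $X$ on which all $n$ factors $T^{(m)}_{f^j x}$ are simultaneously constant, $S^{(m)}$ is constant.

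It remains to verify the pointwise convergence $S^{(m)}_x \to T^n_x$. Fix $x$. From the elementary bound $\|A_m B_m - A B\| \le \|A_m\|\,\|B_m - B\| + \|A_m - A\|\,\|B\|$, the convergences $A_m \to A$, $B_m \to B$ in norm force $A_m B_m \to A B$ (using that $\sup_m \|A_m\| < \infty$); iterating this over the $n$ factors, with $T^{(m)}_{f^j x} \to T_{f^j x}$ for each $j$, gives $S^{(m)}_x \to T^n_x$ in operator norm as $m \to \infty$. Applying Item 2 of Lemma \ref{lem:measDet} then yields $V_q(S^{(m)}_x) \to V_q(T^n_x)$, which completes the argument. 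No real obstacle arises; the only point needing slight care is the bookkeeping of the finitely many cells on which $S^{(m)}$ is constant, which is routine.
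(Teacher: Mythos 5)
Your argument is correct and fills in exactly the details that the paper leaves implicit when it declares the corollary "an immediate consequence of Item 2 in Lemma \ref{lem:measDet}": you build the finite-valued approximants $S^{(m)}_x = T^{(m)}_{f^{n-1}x}\circ\cdots\circ T^{(m)}_x$, check pointwise norm convergence to $T^n_x$, and then compose with the norm-continuity of $V_q$ — precisely the intended chain of reasoning.
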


\noindent We note as well that the map $x \mapsto |T^n_x|_\a$ is measurable for $n \geq 1$ (note $|A|_\a \leq |A|$ for $A \in L(\Bc)$). The following identifies the Lyapunov exponents of the cocycle $T$ in terms of volume growth.

\begin{lem}\label{lem:growthRate}
Let $T : X \to L(\Bc)$ be a map satisfying \eqref{eq:measCondition}. Assume as well that  
\begin{align}\label{eq:cocycleIntegrable}
\int_X \log^+ |T_x| \, d \mu(x) < \infty.
\end{align}

Then, the following hold.
\begin{enumerate}
\item For any $q \geq 1$, the exponential growth rates $l_q$, defined by
\begin{align} \label{eq:definitionLQ}
l_q = \lim_{n \to \infty} \frac{1}{n} \log V_q(T^n_x),
\end{align}
exist and are constant $\mu$-almost surely. 
\item Writing $K_1 = l_1, K_q = l_q - l_{q - 1}$ for $q > 1$, the sequence $\{K_q\}_{q \geq 1}$ is nonincreasing, i.e., $K_1 \geq K_2 \geq \cdots$.
\item Defining $l_{\a} = \lim_{q \to \infty} K_q $, we have that $\mu$-almost surely,
\[
 \lim_{n \to \infty} \frac{1}{n} \log |T^n_x|_{\a} = l_{\a}
\]
\end{enumerate}
\end{lem}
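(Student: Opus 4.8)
The three items are driven by subadditivity together with the Gelfand-number comparison \eqref{eq:prodGelfand} and the noncompactness comparisons (Lemmas \ref{lem:gelfandNoncompact} and \ref{lem:maxVolNoncompact}). First I would handle item (1). By Corollary \ref{cor:volGrowMeas} the maps $x \mapsto \log V_q(T^n_x)$ are measurable, and by submultiplicativity of $V_q$ (noted after \eqref{eq:maxVolGrow}) together with the cocycle identity $T^{n+m}_x = T^m_{f^n x} \circ T^n_x$, the sequence $a^{(q)}_n(x) := \log V_q(T^n_x)$ is subadditive: $a^{(q)}_{n+m}(x) \leq a^{(q)}_n(x) + a^{(q)}_m(f^n x)$. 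One must check the integrability hypothesis of Kingman's subadditive ergodic theorem: $\log^+ V_q(T_x) \leq q \log^+ |T_x|$ since $\det(T_x|E) \leq |T_x|^q$ for any $q$-dimensional $E$, so $\int_X \log^+ V_q(T_x)\,d\mu < \infty$ by \eqref{eq:cocycleIntegrable}. Kingman then gives that $l_q = \lim_{n} \frac1n \log V_q(T^n_x)$ exists $\mu$-a.s.\ and, by ergodicity, is $\mu$-a.s.\ constant (and equal to $\inf_n \frac1n \int \log V_q(T^n)\,d\mu$, valued in $[-\infty, \infty)$). The same argument applied to $x \mapsto \log |T^n_x|_\alpha$, which is subadditive by Proposition \ref{prop:kuratowski} and satisfies $\log^+ |T_x|_\alpha \leq \log^+ |T_x|$, shows $l_\alpha := \lim_n \frac1n \log |T^n_x|_\alpha$ exists and is a.s.\ constant.

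Item (2) is where \eqref{eq:prodGelfand} enters. Applying Lemma \ref{lem:gelfandProps} to $A = T^n_x$ gives $V_q(T^n_x) \approx c_q(T^n_x) V_{q-1}(T^n_x)$ with constants depending only on $q$; taking $\frac1n \log$ and letting $n \to \infty$, the $q$-dependent multiplicative constants vanish in the limit, so $K_q = l_q - l_{q-1} = \lim_n \frac1n \log c_q(T^n_x)$ exists a.s. To get $K_q \geq K_{q+1}$, i.e.\ $\lim_n \frac1n \log c_q(T^n_x) \geq \lim_n \frac1n \log c_{q+1}(T^n_x)$, I would use that the Gelfand numbers are nonincreasing in $q$ for a fixed operator — $c_{q+1}(A) \leq c_q(A)$ — which is immediate from the definition \eqref{eq:gelfand} since enlarging the codimension of $R$ enlarges the infimand set. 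Hence $c_{q+1}(T^n_x) \leq c_q(T^n_x)$ for every $n$, and passing to exponential rates preserves the inequality. (Alternatively, one argues directly: $V_{q+1}/V_q \lesssim V_q/V_{q-1}$ via two applications of \eqref{eq:prodGelfand} plus $c_{q+1} \leq c_q$.)

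Item (3) identifies $\lim_q K_q$ with the a.s.\ constant $l_\alpha$ from item (1). Since $\{K_q\}_q$ is nonincreasing by item (2), $\lim_{q\to\infty} K_q$ exists in $[-\infty,\infty)$; call it $\ell$. The plan is a two-sided comparison of exponential rates of $c_q(T^n_x)$ and $|T^n_x|_\alpha$ via $c(A) := \lim_q c_q(A)$ and Lemma \ref{lem:gelfandNoncompact}, $\tfrac12 |A|_\alpha \leq c(A) \leq 2|A|_\alpha$, giving $\lim_n \frac1n \log c(T^n_x) = \lim_n \frac1n \log |T^n_x|_\alpha = l_\alpha$. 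For the upper bound $\ell \leq l_\alpha$: for each fixed $n$, $\lim_{q} \frac1n \log c_q(T^n_x) = \frac1n \log c(T^n_x) \leq \frac1n \log(2|T^n_x|_\alpha)$; I would like to exchange $\lim_n$ and $\lim_q$, which is legitimate here because $K_q^{(n)} := \frac1n \log \frac{V_q(T^n_x)}{V_{q-1}(T^n_x)}$ differs from $\frac1n \log c_q(T^n_x)$ only by $O(\frac1n \log C_q)$ and because monotone/dominated limits can be interchanged — concretely, using $\frac{1}{q}\log V_q(T^n_x) \geq$ (something controlling $K_q^{(n)}$ from below is delicate), so I expect to instead argue via Proposition \ref{lem:maxVolNoncompact}: $\limsup_{q\to\infty} \frac1q \log V_q(T^n_x) \leq \log(2|T^n_x|_\alpha)$ for each $n$, hence $\ell = \lim_q K_q = \limsup_q \frac1q \log V_q(T^n_x)$-type reasoning after dividing by $n$... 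The cleanest route, and the one I would take: since $K_q \downarrow \ell$, Cesàro gives $\frac1q l_q = \frac1q \sum_{j\leq q} K_j \to \ell$; on the other hand $\frac1q \log V_q(T^n_x) = \frac1q \sum_{j\leq q}\log\frac{V_j}{V_{j-1}}(T^n_x)$, and by \eqref{eq:prodGelfand} this is $\frac1q \sum_{j\leq q} \log c_j(T^n_x) + O(\frac1q \sum \log C_j)$; combining with Proposition \ref{lem:maxVolNoncompact} (applied at each $n$, then divided by $n$ and sent $n\to\infty$ using item (1)) yields $\ell \leq l_\alpha$. For the reverse $\ell \geq l_\alpha$: one shows $c_q(T^n_x) \geq c(T^n_x) \geq \frac12 |T^n_x|_\alpha$, so $K_q = \lim_n \frac1n \log c_q(T^n_x) \geq \lim_n \frac1n \log |T^n_x|_\alpha = l_\alpha$ for every $q$, whence $\ell = \lim_q K_q \geq l_\alpha$.

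\textbf{Main obstacle.} The only genuinely delicate point is the interchange of the $q \to \infty$ and $n \to \infty$ limits in item (3), i.e.\ showing $\lim_q K_q$ (a rate in $q$ of a quantity that is itself an $n\to\infty$ rate) coincides with $\lim_n \frac1n\log|T^n_x|_\alpha$. Everything else — Kingman's theorem, integrability bounds, monotonicity of Gelfand numbers, and the $q$-dependent constants washing out under $\frac1n\log(\cdot)$ — is routine. I would resolve the interchange by trapping $K_q$ between $\lim_n\frac1n\log c_q(T^n_x)$-type quantities on one side (using $c_{q} \geq c \geq \tfrac12|\cdot|_\alpha$, uniformly in $q$, for the lower bound on $\ell$) and the Cesàro/Proposition \ref{lem:maxVolNoncompact} estimate on the other, so that no illegitimate limit-swap is ever performed.
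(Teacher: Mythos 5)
Your treatment of Items (1) and (2) matches the paper exactly: Kingman plus $\log^+ V_q(T_x) \leq q\log^+|T_x|$ for Item (1), and the comparison $V_q \approx c_q V_{q-1}$ from Lemma \ref{lem:gelfandProps} combined with the monotonicity $c_q \geq c_{q+1}$ for Item (2). Likewise the lower bound $\ell := \lim_q K_q \geq l_\alpha$ in Item (3), via $c_q(T^n_x) \geq c(T^n_x) \geq \tfrac12|T^n_x|_\alpha$ uniformly in $q$, is the paper's argument.

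The gap is exactly where you flag it, in the upper bound $\ell \leq l_\alpha$, but you have not in fact closed it. You write that one "combines with Proposition \ref{lem:maxVolNoncompact} (applied at each $n$, then divided by $n$ and sent $n\to\infty$)" — but this amounts to asserting
\[
\lim_{q \to \infty} \lim_{n \to \infty} \frac{1}{qn}\log V_q(T^n_x) \;\leq\; \lim_{n\to\infty} \limsup_{q\to\infty} \frac{1}{qn}\log V_q(T^n_x)\,,
\]
and this interchange is \emph{not} generally valid: for a double array $a_{qn}$, one can have $\lim_q\lim_n a_{qn} > \lim_n\limsup_q a_{qn}$ (e.g.\ $a_{qn} = -1$ for $q>n$ and $0$ otherwise). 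The "trapping" you describe resolves the lower bound but does not touch this issue, since the Cesàro rearrangement $\tfrac1q l_q \to \ell$ still leaves a double limit in $n$ then $q$, whereas Proposition \ref{lem:maxVolNoncompact} controls the $q$-rate at each fixed $n$.

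The paper's resolution is a genuine extra idea: the Kingman theorem gives not only almost-sure convergence but also the identity $l_q = \inf_n \frac{1}{n}\int_X \log V_q(T^n_x)\,d\mu(x)$ (the subadditive sequence of expectations converges to its infimum). This converts the troublesome $\lim_n$ into an $\inf_n$, so that
\[
\ell = \inf_q \frac{1}{q} l_q = \inf_q \inf_n \frac{1}{qn}\int \log V_q(T^n_x)\,d\mu = \inf_n \inf_q \frac{1}{qn}\int \log V_q(T^n_x)\,d\mu \,,
\]
where the double infimum swaps freely. Bounding the inner $\inf_q$ by $\limsup_q$, applying the Reverse Fatou Lemma, and then Proposition \ref{lem:maxVolNoncompact} yields $\ell \leq \inf_n \frac{1}{n}\int\log(2|T^n_x|_\alpha)\,d\mu = \lim_n\frac1n\int\log|T^n_x|_\alpha\,d\mu$, which coincides with the almost-sure limit by Kingman again. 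Without the $\inf_n$ reformulation, your Cesàro/Prop \ref{lem:maxVolNoncompact} plan does not go through.
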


\begin{rmk}\label{rmk:exponents}
We write $\l_1 > \l_2 > \cdots$ for the distinct values of the sequence $\{K_q\}_q$. There may be finitely many of these, in which case the last of these values is equal to $l_{\a}$, or infinitely many, in which case $\l_i \to l_{\a}$ as $i \to \infty$. We write $m_i$ for the multiplicity of the value $\l_i$ amongst the sequence $\{K_q\}_q$ and $M_1 := 0, M_i := m_1 + \cdots + m_{i - 1}$ for $i \geq 2$.
\end{rmk}

\begin{proof}
{\bf Item 1.} The almost sure convergence of $\frac{1}{n} \log V_q(T^n_x)$ follows immediately from the Kingman Subadditive Ergodic Theorem (KSET) (\S 1.5 of \cite{Kr}), in light of the integrability hypothesis \eqref{eq:cocycleIntegrable} and the measurability of $x \mapsto V_q(T^n_x)$ (Corollary \ref{cor:volGrowMeas}), since
\[
V_q(T^{n + m}_x) \leq V_q(T^m_{f^n x}) \cdot V_q(T^n_x) 
\] 
by the multiplicativity of the determinant as in Proposition \ref{prop:detProps}. 

{\bf Item 2.} From Lemma \ref{lem:gelfandProps}, one has that
\[V_q(T^n_x) \approx c_1(T^n_x) \cdot c_2(T^n_x) \cdots c_q(T^n_x),\]
where $\approx$ depends only on $q$, and so one can see directly from the convergence of the sequences $\frac{1}{n} \log V_q(T^n_x)$ as $n \to \infty$ that the growth rates of the Gelfand numbers $\frac{1}{n} \log c_q(T^n_x)$ converge to $K_q$ as $n \to \infty$ for any $q \geq 1$. Obviously, $c_{q}(\cdot) \geq c_{q + 1}(\cdot)$ for any $q \geq 1$, and so $K_1 \geq K_2 \geq \cdots$ follows immediately.

{\bf Item 3.} Recall that $c(A) = \inf_q c_q(A)$. Note that almost surely,
\[
K_q = \lim_{n \to \infty} \frac{1}{n} \log c_q(T^n_x) \geq \limsup_{n \to \infty} \frac{1}{n} \log c(T^n_x).
\]
By Lemma \ref{lem:gelfandNoncompact}, taking $q \to \infty$ lets us deduce $l_{\a} \geq \lim_{n \to \infty} \frac{1}{n} \log |T^n_x|_{\a}$.

For the other direction, the limit as $q \to \infty$ of the Cesaro averages $\frac{1}{q} l_q$ equals $l_{\a}$. Recall that $\frac{1}{q} l_q$ is decreasing, and that by the KSET, the limit in \eqref{eq:definitionLQ} holds almost surely and in $L^1$ (\S 1.5 of \cite{Kr}); in particular, $l_q = \inf_n \frac1n \int \log V_q(T^n_x) \, d \mu(x)$. Now, using Proposition \ref{lem:maxVolNoncompact},
\begin{align*}
l_{\a} &= \inf_q \frac{1}{q} l_q = \inf_q \inf_n \frac{1}{q n} \int \log V_q(T^n_x) \, d \mu(x) = \inf_n \inf_q \frac{1}{qn } \int \log V_q(T^n_x) \, d \mu(x)\\
& \leq \inf_n \frac{1}{n} \limsup_q  \int \frac{1}{q} \log V_q(T^n_x) \, d \mu(x) \leq \inf_n \frac{1}{n} \int \log (2 |T^n_x|_{\a}) \, d \mu(x) \leq \lim_{n \to \infty} \frac{1}{n} \int \log |T^n_x|_{\a} d \mu(x) \, .
\end{align*}
In the second line we use the Reverse Fatou Lemma. By the KSET we have that $\lim_n \frac{1}{n}  \log |T^n_x|_{\a}$ exists and coincides almost surely with $\lim_n \frac{1}{n} \int \log |T^n_x|_{\a} d \mu(x)$. This completes the estimate.
\end{proof}

\subsection{Proof of Theorem \ref{thm:volMET}}

For the remainder of this section, we give the volume-based proof of Theorem \ref{thm:volMET}. 

\subsubsection{A `trajectory-wise' version of the MET}

Below, we state a version of the MET, to be applied one trajectory at a time: this the analogue of Proposition 2.1 in Section 2 of Ruelle's paper \cite{R2}. We refer to the norms on the spaces $V_i$ below with the same symbol $|\cdot|$.

\begin{prop} \label{prop:staticMET}
Let $V_0, V_1, V_2, \cdots$ be Banach spaces and let $T_i : V_i \to V_{i + 1}$ be a sequence of bounded linear maps. Write $T^n = T_{n-1} \circ \cdots \circ T_0$, and assume the following of $\{T_n\}$.

\begin{enumerate}
\item $\lim_{n \to \infty} \frac{1}{n} \log^+ |T_n| = 0$.

\item For any $q \in \N$, the following limits exist:
\[
L_q = \lim_{n \to \infty} \frac{1}{n} \log V_q(T^n).
\]
\item 
Writing $k_1 = L_1, k_q = L_q - L_{q-1}$ for $q > 1$, we have that for some $m < \infty$, $\ol := k_1 = \cdots = k_m > k_{m + 1} =: \ul$. 
\end{enumerate}
Then, the subspace $\underline{F} := \{v \in V_0 : \limsup_{n \to \infty} \frac{1}{n} \log |T^n v| \leq \ul \}$ is closed and $m$-codimensional. For any $v \in V_0 \setminus \underline{F}$, 
\[
\lim_{n \to \infty} \frac{1}{n} \log |T^n v| = \ol
\]

For any $\eta \in (0,1)$, the former convergence occurs uniformly over vectors in the cone $C_{\eta}= \{v \in \Bc : d(v, \underline{F}) \geq \eta  |v| \}$ in the following sense:
\begin{align}\label{eq:uniformGrowth}
\lim_{n \to \infty} \frac{1}{n} \log \inf_{v \in C_{\eta} \setminus \{0\}}\frac{ |T^n v|}{|v|} = \ol.
\end{align}
In particular,
\begin{align}\label{eq:staticVolumeGrowth}
\lim_{n \to \infty} \frac{1}{n} \log \det(T^n | E) = m \ol
\end{align}
for any complement $E$ to $\underline{F}$.
\end{prop}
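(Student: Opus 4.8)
The plan is to adapt Ruelle's Hilbert-space argument (cf.\ \cite{R2}), with Gelfand numbers playing the role of singular values and the dimension-dependent estimates of Section 2 absorbing the geometric distortion of a general Banach space; the whole argument is driven by the gap $\ol > \ul$. By Lemma \ref{lem:gelfandProps}, $V_q(T^n) \approx c_1(T^n) \cdots c_q(T^n)$, so hypothesis (2) forces $\frac1n \log c_q(T^n) \to k_q$ for every $q$; in particular $\frac1n \log c_m(T^n) \to \ol$ and $\frac1n \log c_{m+1}(T^n) \to \ul$, a genuine exponential separation. Two families of subspaces organize everything: the \emph{fast} subspaces $U_n \in \Gc_m(V_0)$, chosen with $\det(T^n | U_n) \ge \frac12 V_m(T^n)$, and the \emph{slow} subspaces $R_n \subset V_0$ with $\codim R_n = m$, chosen with $|T^n|_{R_n}| \le 2 c_{m+1}(T^n)$. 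Recalling $V_1(A) = |A|$ and $L_1 = k_1 = \ol$, so $\frac1n \log |T^n| \to \ol$, Corollary \ref{cor:minExpand} gives $\inf_{0 \neq v \in U_n} |T^n v|/|v| \gtrsim \det(T^n | U_n)/|T^n|_{U_n}|^{m-1} = e^{n(\ol + o(1))}$ — i.e.\ $T^n$ expands every direction in $U_n$ at rate $\ol$ — whereas $|T^n v| \le e^{n(\ul + o(1))}|v|$ for every $v \in R_n$.

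The crux of the argument, and the step I expect to be the main obstacle, is to show that $U_n$ converges in $(\Gc(V_0), d_H)$ to a limit $U$, at the exponential rate $d_H(U_n, U) \lesssim e^{-n(\ol - \ul) + o(n)}$. The mechanism is the block-determinant estimate of Lemma \ref{lem:detSplit} together with the near-maximality of $\det(T^n | U_n)$: if some unit vector were $d_H$-far from $U_n$ yet were not contracted by $T^n$ at rate $\approx \ul$ (relative to the $\ol$-expansion on $U_n$), then adjoining it to a suitable $(m-1)$-dimensional slice of $U_n$ would, via Lemma \ref{lem:detSplit}, produce an $m$-dimensional subspace of exponentially larger $T^n$-volume, contradicting the choice of $U_n$. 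Hence $T^n$ contracts every direction $d_H$-transverse to $U_n$ at rate $\le \ul + o(n)$. Factoring $T^{n+p} = (T_{n+p-1} \circ \cdots \circ T_n) \circ T^n$ and feeding this dichotomy into a comparison of the near-maximizers $U_{n+p}$ and $U_n$ — the $T^n$-image of a direction transverse to $U_n$ is too small to sit inside a near-maximizing $m$-plane for $T^{n+p}$ — yields the Cauchy estimate; completeness of $\Gc(V_0)$ and closedness of $\Gc_m(V_0)$ (Proposition \ref{prop:grassProps}) give the $m$-dimensional limit $U$, on which $T^n$ still expands at rate $\ol$ uniformly. The identical dichotomy applied to the $R_n$ keeps $\theta(U_n, R_n)$ bounded away from $0$, so $U$ is transverse to the slow directions.

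Next I would identify $\underline{F}$. Building a complement $\tilde F$ to $U$ from a suitable limit of the slow subspaces $R_n$ and decomposing $v = u + w$ along $V_0 = U \oplus \tilde F$, the component $u$ is expanded at rate exactly $\ol$ (uniformly) while $|T^n w| \le e^{n(\ul + o(1))}|w|$; hence $\limsup_n \frac1n \log |T^n v| \le \ul$ if and only if $u = 0$, so $\underline{F} = \tilde F$ is closed and $m$-codimensional and $V_0 = U \oplus \underline{F}$ is a topological splitting. The same computation gives the dichotomy: for $v \notin \underline{F}$ one has $\frac1n \log |T^n v| \le \frac1n \log |T^n| \to \ol$, bounding the limsup, while the nonzero $U$-component forces the liminf to be $\ge \ol$, so $\lim_n \frac1n \log |T^n v| = \ol$. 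For the uniform estimate \eqref{eq:uniformGrowth}: if $v \in C_\eta$ then $d(v, \underline{F}) \ge \eta |v|$ forces $|\pi_{U \ds \underline{F}} v| \gtrsim \eta |v|$ (as $\pi_{U \ds \underline{F}}$ is bounded), and since $T^n$ expands $U$ at rate $\ol$ uniformly, $\inf_{v \in C_\eta \setminus \{0\}} |T^n v|/|v| \ge e^{n(\ol - \epsilon)}$ for all $n$ large; combined with $|T^n v| \le |T^n| |v| = e^{n(\ol + \epsilon)}|v|$, this is precisely \eqref{eq:uniformGrowth}.

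Finally, \eqref{eq:staticVolumeGrowth}. Let $E$ be any complement to $\underline{F}$; then $\dim E = \codim \underline{F} = m$, and since $E$ is finite-dimensional and $\underline{F}$ is closed, the splitting $V_0 = E \oplus \underline{F}$ is automatically topological (closed graph theorem), so $E \subset C_\eta \cup \{0\}$ with $\eta = |\pi_{E \ds \underline{F}}|^{-1}$. The upper bound is immediate: $\det(T^n | E) \le V_m(T^n) = e^{n(m\ol + o(1))}$, since $L_m = k_1 + \cdots + k_m = m\ol$. For the lower bound, apply Proposition \ref{prop:approxSVD}(2) to $T^n|_E$: for each $n$ there is an almost orthonormal — hence $|\cdot|$-unit — basis $v_1, \dots, v_m$ of $E$ with $\det(T^n | E) \approx \prod_{i=1}^m |T^n v_i|$, and each $v_i \in E \subset C_\eta$, so \eqref{eq:uniformGrowth} gives $|T^n v_i| \ge e^{n(\ol - \epsilon)}$ for all $n$ large; hence $\det(T^n | E) \gtrsim e^{n m (\ol - \epsilon)}$. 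Letting $\epsilon \downarrow 0$ and combining with the upper bound yields $\lim_n \frac1n \log \det(T^n | E) = m\ol$, which is \eqref{eq:staticVolumeGrowth}.
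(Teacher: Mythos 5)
Your proposal's central step is to show that the ``fast'' subspaces $U_n \in \Gc_m(V_0)$, chosen to nearly maximize $\det(T^n\,|\,\cdot\,)$, converge in $d_H$. This is the step that fails: in a general Banach space the near-maximizers are highly non-unique and need not converge. Consider $V_n = (\R^2, |\cdot|_\infty)$ with $T_n = \operatorname{diag}(2,1)$ for all $n$, so $\ol = \log 2$, $\ul = 0$, $m = 1$. Here $\det(T^n \,|\, \langle (1,b)\rangle) = 2^n$ for every $|b| \leq 1$, so \emph{any} line $\langle (1,b)\rangle$, $|b| \leq 1$, exactly maximizes the $1$-volume growth; the choice $U_n = \langle (1, (-1)^n)\rangle$ is a legitimate sequence of maximizers that oscillates at Hausdorff distance $1$. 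Your ``mechanism'' for the Cauchy estimate --- a unit vector far from $U_n$ that is not contracted at rate $\ul$ yields a higher-volume $m$-plane, contradiction --- does not actually produce a contradiction, precisely because a direction can be $d_H$-far from $U_n$ and still be expanded at the top rate $\ol$ without increasing the determinant. (In the example, $v = (1,0)$ is at $d_H$-distance $1$ from $\langle (1,\pm1)\rangle$, is expanded at rate $2^n$, and $\det(T^n\,|\,\langle v\rangle) = 2^n$, the same value.) The Hilbert-space intuition that the fast singular subspaces converge is an artifact of orthogonality: the fast eigenspaces of $\lim ((T^n)^*T^n)^{1/2n}$ are the orthogonal complements of the (genuinely convergent) slow eigenspaces. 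That crutch is absent in a Banach space, where $\underline{F}$ has no canonical complement and the dynamically defined object is the slow filtration, not a fast distribution.

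The paper avoids this trap by running the Cauchy argument on the \emph{slow} complements: from a near-maximizer $E_n^1 = U_n$ it builds the $m$-codimensional subspace $F_n^1 = \{v : T^n v \in F_n^2\}$ (a preimage of a bounded complement to $T^n E_n^1$), establishes $|T^n|_{F_n^1}| \lesssim V_{m+1}(T^n)/V_m(T^n) \approx e^{n\ul}$, and then shows $\d(F_n^1, F_{n+1}^1) \lesssim e^{n(\ul - \ol + o(n))}$. The point is that ``$|T^n v|$ small'' is a robust property under one more cocycle step --- $|T^{n+1}v| \leq |T_n||T^n v|$ --- so a vector in $F_n^1$ cannot acquire a large component along the expanding subspace $E_{n+1}^1$; this is what forces $F_n^1 \to \underline{F}$ regardless of how badly the maximizers $E_n^1$ oscillate. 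Your sketch gestures at introducing slow subspaces $R_n$ and taking a ``suitable limit,'' which is closer to the right idea, but the proposal as written stakes the argument on the convergence of $U_n$, and that claim is false. The final volume-growth paragraph (upper bound from $V_m$, lower bound from \eqref{eq:uniformGrowth} applied to an almost orthonormal basis of $E$) is correct and essentially matches the paper, but it rests on $\underline{F}$ being closed and $m$-codimensional with the uniform cone estimate already in hand, so it cannot be salvaged independently.
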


We now show how to derive all of Theorem \ref{thm:volMET} from Proposition \ref{prop:staticMET}, except for \eqref{eq:correctVolGrow}. Below, we assume that there are infinitely many distinct Lyapunov exponents- the proof for when there are finitely many distinct exponents is virtually identical.

\begin{proof}[Proof of Theorem \ref{thm:volMET} from Proposition \ref{prop:staticMET}]
We define $\Gamma \subset X$ to be the set of all $x \in X$ such that the limit $\lim_{n \to \infty} \frac{1}{n} \log V_q(T^n_x)$ exists and equals $l_q$, and for which 
\begin{equation} \label{eq:normControl}
\lim_{n \to \infty} \frac{1}{n} \log^+ |T_{f^n x}| = 0.
\end{equation}
The condition \eqref{eq:normControl} is $\mu$-generic by the Birkhoff Ergodic Theorem and the integrability hypothesis $\int_X \log^+ |T_x| d \mu(x) < \infty$.

For each $x \in \Gamma$, the sequence $T_n := T_{f^n x}$ satisfies the hypotheses of Proposition \ref{prop:staticMET}, and so we obtain the following, which can be thought of as the MET for the first Lyapunov exponent. Below, with $F_{\l}(x)$ as in \eqref{eq:slowGrowSpaces}, we define $F_i(x) := F_{\l_i}(x)$ for Lyapunov exponents $\l_i$ as in Remark \ref{rmk:exponents}.

\begin{lem}\label{lem:firstSubMET}
Let $T : X \to L(\Bc)$ satisfy the hypotheses of Lemma \ref{lem:growthRate}, and let $\l_1, \l_2, m_1$ be as in Remark \ref{rmk:exponents}. Then, for each $x \in \Gamma$, $F_2(x)$ is $m_1$-codimensional, and satisfies
\begin{gather*}
\forall v \in \Bc \setminus F_2(x),~  \lim_{n \to \infty} \frac{1}{n} \log |T^n_x v| = \l_1
\end{gather*}
The former convergence occurs uniformly over vectors in the cone $C_{\eta}(x)= \{v \in \Bc : d(v, F_2(x)) \geq \eta |v| \}$, in the following sense.
\[
\lim_{n \to \infty} \frac{1}{n} \log \min_{v \in C_{\eta}(x) \setminus \{0\}} \frac{|T^n_x v|}{|v|} = \l_1.
\]
As a consequence, for any complement $E$ to $F_2(x)$, we have that
\begin{align}\label{eq:firstSubVolGrow2}
\lim_{n \to \infty} \frac{1}{n} \log \det(T^n_x | E) = m_1 \l_1.
\end{align}
\end{lem}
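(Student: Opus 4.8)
The plan is to deduce Lemma~\ref{lem:firstSubMET} by a direct application of Proposition~\ref{prop:staticMET}, taking for the sequence of Banach spaces the constant sequence $V_i = \Bc$ and for the operators $T_i := T_{f^i x}$, where $x \in \Gamma$ is fixed. With these choices $T^n = T_{n-1} \circ \cdots \circ T_0 = T^n_x$, so the objects appearing in Proposition~\ref{prop:staticMET} are exactly those attached to the cocycle trajectory through $x$.

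First I would verify the three hypotheses of Proposition~\ref{prop:staticMET} for this sequence. Hypothesis~(1), namely $\frac1n \log^+ |T_{f^n x}| \to 0$, is precisely condition~\eqref{eq:normControl} built into the definition of $\Gamma$. Hypothesis~(2) holds because, by definition of $\Gamma$, the limit $L_q := \lim_{n\to\infty} \frac1n \log V_q(T^n) = \lim_{n\to\infty} \frac1n \log V_q(T^n_x)$ exists and equals $l_q$ for every $q$. Consequently $k_q := L_q - L_{q-1} = K_q$ for all $q$ (with $k_1 = L_1 = l_1 = K_1$), so hypothesis~(3) is just the statement, contained in Lemma~\ref{lem:growthRate} and Remark~\ref{rmk:exponents}, that $K_1 = \cdots = K_{m_1} = \l_1 > K_{m_1 + 1} = \l_2$; since $\l_1 > l_\a$ the multiplicity $m_1$ is finite, so we may take $m = m_1$, $\ol = \l_1$, $\ul = \l_2$ in Proposition~\ref{prop:staticMET}.

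Then I would simply read off the conclusions and match notation. The subspace $\underline{F} = \{v \in \Bc : \limsup_{n\to\infty} \frac1n \log |T^n_x v| \le \ul\}$ produced by the proposition is, by \eqref{eq:slowGrowSpaces} and the convention $F_i(x) = F_{\l_i}(x)$, exactly $F_{\l_2}(x) = F_2(x)$; hence $F_2(x)$ is closed and $m_1$-codimensional, the limit $\lim_{n\to\infty} \frac1n \log |T^n_x v| = \l_1$ holds for every $v \in \Bc \setminus F_2(x)$, and \eqref{eq:uniformGrowth} — with $C_\eta = C_\eta(x)$, since $\underline{F} = F_2(x)$ — gives the asserted uniform expansion over the cones $C_\eta(x)$ (the $\min$ appearing in the lemma's statement is the infimum in \eqref{eq:uniformGrowth}, which need not be attained). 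Finally, \eqref{eq:firstSubVolGrow2} is literally \eqref{eq:staticVolumeGrowth} with $m = m_1$, $\ol = \l_1$, applied to an arbitrary complement $E$ of $\underline{F} = F_2(x)$.

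In short, there is no real obstacle at this stage: the lemma is a transcription of Proposition~\ref{prop:staticMET} to a single trajectory, and the only points requiring attention are bookkeeping — checking that a point $x \in \Gamma$ meets the three hypotheses, and identifying $\underline{F}$ with $F_2(x)$ and the constants $\ol, \ul, m$ with $\l_1, \l_2, m_1$. All of the substance is deferred to the (independent) proof of Proposition~\ref{prop:staticMET} itself.
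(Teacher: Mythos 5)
Your proposal is correct and is exactly the route taken in the paper: Lemma~\ref{lem:firstSubMET} is obtained by applying Proposition~\ref{prop:staticMET} to the trajectory sequence $T_n := T_{f^n x}$ for each $x \in \Gamma$, with the hypotheses of the proposition verified precisely as you do (via \eqref{eq:normControl}, the definition of $\Gamma$, and the finite multiplicity $m_1 < \infty$ coming from $\l_1 > l_\a$), and the conclusions read off by identifying $\underline{F} = F_2(x)$, $\ol = \l_1$, $\ul = \l_2$, $m = m_1$.
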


We now formulate an induction step, to show that the hypotheses of Proposition \ref{prop:staticMET} are satisfied for the sequence $T_0 := T_x|_{F_2(x)}, T_n = T_{f^{n} x}$, with the Lyapunov exponents `shifted down' so as to eliminate the top exponent $\l_1$.

\begin{lem}[Exponent Extraction Lemma] \label{lem:lyapInduct}
For any $x \in \Gamma$, we have that $\lim_{n \to \infty} \frac{1}{n} \log V_q(T^n_x|_{F_2(x)})$ exists and equals $l_{q + m_1} - l_{m_1}$ for any $q \geq 1$.
\end{lem}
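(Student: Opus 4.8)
The goal is to compute, for fixed $x \in \Gamma$, the maximal $q$-dimensional volume growth of the restricted cocycle $T^n_x|_{F_2(x)}$. Write $F = F_2(x)$, which is a closed, $m_1$-codimensional subspace by Lemma \ref{lem:firstSubMET}. The natural strategy is to compare $V_q(T^n_x|_F)$ with $V_{q + m_1}(T^n_x)$: intuitively, the largest $(q+m_1)$-dimensional volume for $T^n_x$ is achieved by taking the best $q$-dimensional subspace inside the `slow' space $F$ together with an $m_1$-dimensional complement $E$ to $F$ on which $T^n_x$ grows at the maximal rate $\approx e^{n l_{m_1}}$, and the `block determinant' estimate of Lemma \ref{lem:detSplit} should let one factor the volume on the direct sum into (volume on the $q$-dimensional piece of $F$) times (volume on $E$), up to a multiplicative error controlled by the angle between the two blocks. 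This would give $V_{q+m_1}(T^n_x) \approx V_q(T^n_x|_F) \cdot \det(T^n_x | E)$ up to subexponential factors, and combining with \eqref{eq:firstSubVolGrow2} (which gives $\frac1n \log \det(T^n_x|E) \to m_1 \l_1 = l_{m_1}$, noting $l_{m_1} = m_1 \l_1$ since $K_1 = \dots = K_{m_1} = \l_1$) and with the definition $l_{q+m_1} = \lim \frac1n \log V_{q+m_1}(T^n_x)$, one reads off $\lim \frac1n \log V_q(T^n_x|_F) = l_{q+m_1} - l_{m_1}$.

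First I would establish the easy inequality $V_{q+m_1}(T^n_x) \gtrsim V_q(T^n_x|_F) \cdot V_{\text{something}}$ type bound: given any $q$-dimensional $W \subset F$ and a fixed complement $E$ to $F$ with $|\pi_{F \ds E}|$ bounded (Lemma \ref{lem:compExist} gives $|\pi_{F \ds E}| \leq \sqrt{m_1}+2$), apply the lower half of Lemma \ref{lem:detSplit} to the splitting $W \oplus E$ (of a $(q+m_1)$-dimensional subspace), controlling the relevant projection norm uniformly in $n$ — here one needs that the angle between $T^n_x W \subset T^n_x F$ and $T^n_x E$ stays subexponentially bounded below, which follows because $T^n_x E$ grows like $e^{n l_{m_1}}$ while vectors in $F$ grow no faster than $e^{n(\l_2 + \epsilon)}$, so a bounded-angle argument (or a direct Hadamard/SVD comparison via Proposition \ref{prop:approxSVD}) shows the blocks cannot become exponentially aligned. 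For the reverse inequality, given a near-optimal $(q+m_1)$-dimensional subspace $V_n$ for $T^n_x$, I would split it as $(V_n \cap F) \oplus (\text{complement})$; since $F$ has codimension $m_1$, $\dim(V_n \cap F) \geq q$, so $V_q(T^n_x|_F) \gtrsim \det(T^n_x | V_n \cap F)$, and again Lemma \ref{lem:detSplit} factors $\det(T^n_x|V_n) \lesssim \det(T^n_x | V_n \cap F) \cdot \det(T^n_x | \text{complement}) \leq V_q(T^n_x|_F) \cdot V_{m_1}(T^n_x)$, the last factor having exponential rate $l_{m_1}$.

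**Main obstacle.** The delicate point is controlling the angle/projection norms appearing in Lemma \ref{lem:detSplit} \emph{uniformly in $n$} — or at least subexponentially in $n$ — for the pushed-forward splittings $T^n_x W \oplus T^n_x E$ and $T^n_x(V_n \cap F) \oplus (\cdots)$. In Ruelle's Hilbert-space argument this is automatic from orthogonality, but here the blocks can shear. The right tool is the uniform-growth statement \eqref{eq:firstSubVolGrow2} together with \eqref{eq:uniformGrowth}-type control from Lemma \ref{lem:firstSubMET}: vectors transverse to $F$ grow at exactly rate $\l_1$ uniformly on cones, while vectors in $F$ grow at rate $\leq \l_2 < \l_1$, so for any fixed complement $E$ the quantity $\inf\{|\pi_{T^n_x E \ds T^n_x F}|^{-1}\}$ can decay at worst subexponentially — in fact one can argue it stays bounded below, since a sequence of unit vectors in $T^n_x E$ approaching $T^n_x F$ would have preimages in $E$ that, after normalization, approach $F$, contradicting the cone estimate after finitely many steps. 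Making this last compactness-plus-uniformity argument precise, and checking that the $m_1$-dependent constants from Lemma \ref{lem:detSplit} (raised to the $m_1$-th power, hence fixed) do not interfere, is the real content; once the angles are under control the exponential bookkeeping is routine. I would also need to handle the degenerate cases $V_{q+m_1}(T^n_x) = 0$ or $V_q(T^n_x|_F) = 0$ separately, but these correspond to finite-rank situations and are immediate.
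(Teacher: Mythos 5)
Your plan matches the paper's proof of Lemma~\ref{lem:lyapInduct} step for step: the lower bound on $V_q(T^n_x|_{F_2(x)})$ intersects a near-optimal $(q+m_1)$-dimensional subspace with $F_2(x)$ (which has codimension $m_1$) and factors through the block-determinant estimate of Lemma~\ref{lem:detSplit} with a controlled complement from Lemma~\ref{lem:compExist}; the upper bound joins a near-optimal $q$-dimensional subspace of $F_2(x)$ with a fixed complement $E$ to $F_2(x)$ and factors through Lemma~\ref{lem:detSplit} again; and in both directions the contribution of $E$ is evaluated via~\eqref{eq:firstSubVolGrow2}, giving $l_{m_1}=m_1\l_1$. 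You have also correctly isolated the only nontrivial step: controlling $|\pi_{T^n_x E \ds T^n_x F_2(x)}|$ subexponentially in $n$, which the paper carves out as Lemma~\ref{lem:compAngle}.

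The problem is that the ``compactness-plus-uniformity'' argument you sketch for this key step would not work, and your aside that the projection norm ``stays bounded'' is almost certainly false in general. Suppose $w_n=T^n_x e_n$ is a unit vector in $T^n_x E$ and $f_n\in F_2(x)$ is chosen so that $|T^n_x(e_n-f_n)|\to 0$. Then $|e_n|\le |(T^n_x|_E)^{-1}|$, which decays like $e^{-n\l_1}$, while $|f_n|\ge |T^n_x|_{F_2(x)}|^{-1}\cdot|T^n_x f_n|$, which decays no faster than $e^{-n(\l_2+\epsilon)}$; since $\l_1>\l_2$, the $F_2(x)$-component of $e_n-f_n$ dominates, so $(e_n-f_n)/|e_n-f_n|$ \emph{does} approach $F_2(x)$. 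That is consistent with, not contradictory to, the cone estimate~\eqref{eq:uniformGrowth}, which constrains vectors away from $F_2(x)$. There is also no compactness to invoke here, since $n\to\infty$ and the images $T^n_x E$, $T^n_x F_2(x)$ keep changing. What is actually needed (and all the paper proves) is only the soft statement
\[
\lim_{n\to\infty}\frac1n\log|\pi_{T^n_x E\ds T^n_x F_2(x)}|=0,
\]
which Lemma~\ref{lem:compAngle} obtains via the explicit estimate, with $p=|\pi_{E\ds F_2(x)}|$,
\[
|\pi_{T^n_x E\ds T^n_x F_2(x)}|\le \frac{|T^n_x|\cdot p}{|(T^n_x|_E)^{-1}|^{-1}\cdot p-|T^n_x|_{F_2(x)}|\cdot(1+p)}\,,
\]
whose numerator grows at exponential rate $\l_1$ trivially and whose denominator grows at rate $\l_1$ because of the uniform growth~\eqref{eq:uniformGrowth} on a cone containing $E$ (the slow term $|T^n_x|_{F_2(x)}|$ being negligible at rate $\l_2<\l_1$). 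You should replace the soft compactness argument by a quantitative bound of this type; once that is done the rest of your outline goes through as written.
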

We will prove this in Section 3.2.3. One now has that the sequence of operators defined by $T_0 := T_x|_{F_2(x)}$, $T_n := T_{f^n x}, n > 0$, satisfies the hypotheses of Proposition \ref{prop:staticMET} with $\ol = \l_2, \ul = \l_3, m = m_2$, and the subspace $\underline{F} \subset F_2(x)$ as in the statement of Proposition \ref{prop:staticMET} is $F_3(x)$. We obtain that the codimension of $F_3(x)$ in $F_2(x)$ is $m_2$, and so $\codim F_3(x) = M_3 = m_1 + m_2$, and
\begin{gather*}
\forall v \in F_2(x) \setminus F_3(x), ~~ \lim_{n \to \infty} \frac{1}{n} \log |T^n_x v| = \l_2.
\end{gather*}

Inductively, assume that we have already shown that $F_i(x)$ has codimension $M_i$, and that for any $1 \leq j \leq i - 1$,
\begin{gather*}
\forall v \in F_j(x) \setminus F_{j + 1}(x), ~~ \lim_{n \to \infty} \frac{1}{n} \log |T^n_x v| = \l_j
\end{gather*}
for all $x \in \Gamma$.

Under this condition, one can show, repeating the argument in the proof of Lemma \ref{lem:lyapInduct}, that for any $x \in \Gamma$, the limit $\lim_{n \to \infty} \frac{1}{n} \log V_q(T^n_x|_{F_i(x)})$ exists for any $q \geq 1$ and equals $l_{q + M_i} - l_{M_i}$. Therefore, $T_0 := T_x|_{F_i(x)}, T_n := T_{f^n x}, n > 0 $ satisfies the hypotheses of Proposition \ref{prop:staticMET} with $\ol = \l_i, \ul = \l_{i  +1}, m = m_i$, and $\underline{F} = F_{i + 1}(x)$. We obtain that the codimension of $F_{i + 1}(x)$ in $F_i(x)$ is $m_i$, hence $\codim F_{i + 1}(x) = M_{i + 1}$, and that
\[
\forall v \in F_{i}(x) \setminus F_{i+ 1}(x), ~~ \lim_{n \to \infty} \frac{1}{n} \log |T^n_x v| = \l_i.
\]
This completes the induction argument.
\end{proof}

We now give the proofs of Proposition \ref{prop:staticMET} and Lemma \ref{lem:lyapInduct}, and the proof of the volume growth formula in \eqref{eq:correctVolGrow}, formulated as Lemma \ref{lem:rightVolGrow}.

\subsubsection{Proof of Proposition \ref{prop:staticMET}}

\begin{proof}[Proof of Proposition \ref{prop:staticMET}]

For each $n$, let $E_n^1$ be an $m$-dimensional subspace of $V_0$ for which \linebreak $\det(T^n | E_n^1) \geq \frac{1}{2} V_m(T^n)$, and define $E_n^2 = T^n E_n^1$. Let $F_n^2$ be a complement to $E_n^2$, as in Lemma \ref{lem:compExist}, for which $P_n^2 := \pi_{E_n^2 \ds F_n^2}$ satisfies $|P_n^2| \leq \sqrt{m}$. Let $F_n^1 = \{v \in V_0 : T^n v \in F_n^2\}$; this is a complement to $E_n^1$ by Lemma \ref{lem:splittingExist}. We write $P_n^1 = \pi_{E_n^1 \ds F_n^1}$.

To carry out our argument, we need to show that $T^n$ expands vectors on $E_n^1$ by a factor of approximately $e^{n \ol}$, that $|P_n^1|$ does not grow too quickly in $n$, and that $|T^n|_{F_n^1}|$ is bounded from above by approximately $e^{n \ul}$ on $F_n^1$. 

\smallskip
\noindent{\bf Growth on $E_n^1$.} Let $v \in E_n^1, |v| = 1$. Then, by Corollary \ref{cor:minExpand}, there is a constant $C_m$ depending only on $m$ for which
\begin{align}\label{eq:minEn1Expansion}
\frac{V_m(T^n)}{2V_1(T^n)^{m-1}} \leq \frac{\det(T^n | E_n^1)}{|T^n|^{m-1}} \leq C_m |T^n v|.
\end{align}

\smallskip
\noindent{\bf Controlling $|P_n^1|$.} Using \eqref{eq:minEn1Expansion} and the formula \eqref{eq:projectionForm} for $|P_n^1|$ in Lemma \ref{lem:splittingExist}, we have that the projection $P_n^1 := \pi_{E_n^1 \ds F_n^1}$ satisfies
\begin{align} \label{eq:Pn1Control}
|P_n^1| \leq |P_n^2| \cdot |T^n| \cdot |(T^n|_{E_n^1})^{-1}| \leq \sqrt{m} \cdot |T^n| \cdot \frac{2 C_m |T^n|^{m-1}}{V_m(T^n)} = C_m' \frac{|T^n|^m}{V_m(T^n)},
\end{align}
where $C_m'$ is again a constant depending only on $m$.

\smallskip
\noindent{\bf Bounding $|T^n|_{F_n^1}|$.} Let $v \in F_n^1, |v| =1$. We will estimate $|T^n v|$ by estimating the growth rate of the $m + 1$-dimensional subspace $E_n^1 \oplus \langle v \rangle$ under $T^n$. Treating $E_n^1 \oplus \langle v \rangle$ as a splitting and using Lemma \ref{lem:detSplit}, we have that
\[
\det(T^n|E_n^1 \oplus \langle v \rangle ) \geq C_m''^{-1} |P_n^2|_{E_n^2 \oplus \langle v \rangle}|^{-m} \cdot \det(T^n|E_n^1) \cdot \det(T^n | \langle v \rangle ),
\]
and since $\det(T^n | \langle v \rangle ) = |T^n v|$ and $\det(T^n | E_n^1 \oplus \langle v \rangle ) \leq V_{m + 1}(T^n)$, we obtain
\begin{align}\label{eq:Fn1control}
|T^n v| \leq 2 \sqrt m C_m''  \frac{V_{m+1}(T^n)}{V_m(T^n)},
\end{align}
where $C_m''$ again depends only on $m$. The RHS of \eqref{eq:Fn1control} is approximately $e^{n \ul}$ for $n$ large, as desired.

Key to this approach to the MET is showing that the subspaces $F_n^1$ converge in the Hausdorff distance at a sufficiently fast exponential rate.
\begin{cla} \label{cla:slowSpaceCauchy}
For any $\d > 0$, there exists $N \in \N$ such that for any $m \geq n  \geq N$,
\[
d_H(F_n^1, F_m^1) \leq e^{n (\ul - \ol + \d)}.
\]
\end{cla}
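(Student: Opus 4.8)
The claim is a Cauchy estimate for the "slow" subspaces $F_n^1$ in the Hausdorff metric, with an exponential rate governed by the gap $\ul - \ol < 0$. The standard Ruelle-type strategy is to compare $F_n^1$ and $F_m^1$ for $m \geq n$ by pushing everything forward by $T^n$: vectors in $F_n^1$ are sent into $F_n^2$, where $T^n$ acts as an operator of norm roughly $e^{n\ul}$ (by \eqref{eq:Fn1control}), while vectors in $E_n^1$ are expanded by roughly $e^{n\ol}$ (by \eqref{eq:minEn1Expansion}). The key geometric input is Lemma \ref{lem:splittingExist}, which identifies $F_n^1 = (T^n)^{-1}(F_n^2)$ and gives the projection formula \eqref{eq:projectionForm}. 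Because $E_n^1$ is only $m$-dimensional, the volume growths $V_m, V_{m+1}$ on either side are comparable to the $T$-invariant quantities $e^{n L_m}, e^{n L_{m+1}}$ up to subexponential factors; combined with hypothesis (1), $|T^n|$ and $|P_n^1|$ grow subexponentially relative to these.

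**Main steps.** First, I would reduce estimating $d_H(F_n^1, F_m^1)$ to estimating one-sided gaps $\delta(F_n^1, F_m^1)$ and $\delta(F_m^1, F_n^1)$, using Item 1 of Lemma \ref{lem:apertureProps} (and, once the gaps are small, Item 3 to pass between the two directions, noting both subspaces have the same finite codimension $m$). Second, given a unit vector $v \in F_n^1$, I want to produce a vector $w \in F_m^1$ with $|v - w|$ exponentially small. The natural candidate is $w = v - \pi_{E_m^1 /\!/ F_m^1} v = \pi_{F_m^1 /\!/ E_m^1} v$, i.e. project $v$ off $E_m^1$ along $F_m^1$; then $v - w = P_m^1 v \in E_m^1$, so I must bound $|P_m^1 v|$ for $v \in F_n^1$. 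Here is the crux: apply $T^n$. Since $v \in F_n^1$, $T^n v \in F_n^2$, so $|T^n v| \leq |T^n|_{F_n^1}| \lesssim e^{n\ul}$ up to subexponential error by \eqref{eq:Fn1control}. On the other hand $P_m^1 v \in E_m^1$, and by \eqref{eq:projectionForm} applied at level $m$, $P_m^1 v = (T^m|_{E_m^1})^{-1} \pi_{E_m^2 /\!/ F_m^2} T^m v$. To relate $T^m v$ to $T^n v$ I write $T^m = T^{m,n} \circ T^n$ where $T^{m,n} = T_{m-1}\circ\cdots\circ T_n$; then $\pi_{E_m^2/\!/F_m^2} T^m v = \pi_{E_m^2/\!/F_m^2} T^{m,n}(T^n v)$, and since $\pi_{E_m^2/\!/F_m^2}$ has norm $|P_m^2| \leq \sqrt{m}$ and $T^n v$ is already exponentially small, I get $|P_m^1 v| \leq |(T^m|_{E_m^1})^{-1}| \cdot \sqrt m \cdot |T^{m,n}| \cdot |T^n v|$. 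Now $|(T^m|_{E_m^1})^{-1}| \lesssim |T^m|^{m-1}/V_m(T^m)$ by \eqref{eq:minEn1Expansion}, which is $\lesssim e^{-m\ol}$ up to subexponential factors; $|T^{m,n}| \leq |T^m| \cdot |(T^n|_{E_n^1})^{-1}|\cdot(\text{stuff})$ — more simply, by subadditivity and hypothesis (1), $\frac1m\log|T^m| \to 0$ and $\frac1m \log |T^{m,n}| \le \frac1m \sum_{j=n}^{m-1}\log^+|T_j| \to 0$ as well; and $|T^n v| \lesssim e^{n\ul}$. Collecting exponents: $|P_m^1 v| \lesssim e^{-m\ol} e^{n\ul} e^{o(n)+o(m)}$. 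Since this must hold for all $m \geq n$ and we want the bound $e^{n(\ul - \ol)}$, the dangerous regime is $m$ close to $n$; there one has $e^{-m\ol}e^{n\ul} \approx e^{n(\ul-\ol)}$, and for larger $m$ (with $\ol > \ul$, hence $-\ol < -\ul$... wait, I need $\ol$ could be negative) — more carefully, the exponent is $n\ul - m\ol$; if $\ol \ge 0$ this is $\le n\ul - n\ol = n(\ul-\ol)$ for $m\ge n$, and if $\ol < 0$ one must also control $m$, but in fact one only needs the bound for $m$ in a range where the subexponential errors are dominated, and the limsup statement \eqref{eq:Fn1control} together with choosing $N$ large handles the $o(n), o(m)$ terms uniformly. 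This gives $\delta(F_n^1, F_m^1) \leq e^{n(\ul - \ol + \delta)}$ for $n \geq N(\delta)$.

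**The main obstacle.** The technical heart is bookkeeping the subexponential error terms uniformly in both $n$ and $m$: the quantities $V_m(T^n), V_{m+1}(T^n), |T^n|$ satisfy limits like $\frac1n\log(\cdot) \to L_m, L_{m+1}, 0$, but in the estimate for $|P_m^1 v|$ one evaluates some of these at $m$ and some at $n$, and also the intermediate cocycle $T^{m,n}$ appears. One must argue that, having fixed $\delta > 0$, there is a single $N$ beyond which \emph{all} the relevant one-sided and intermediate-time estimates hold with error at most (say) $\delta/4$ per occurrence, so the accumulated error is $< \delta \cdot$(max of $n,m$) and — crucially — is dominated by the gap $\ol - \ul > 0$ in the combination $n\ul - m\ol + (\text{errors})$. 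A clean way to organize this: first prove that $\frac1n \log|(T^n|_{E_n^1})^{-1}| \to -\ol$ and $\frac1n\log|P_n^1| \to \ol - \ul$ hold (the latter from \eqref{eq:Pn1Control} and \eqref{eq:prodGelfand}), then feed these uniform-in-large-$n$ statements, together with $\frac1n\log^+|T_n|\to 0$ (which controls $|T^{m,n}|$ via $\frac1{m-n}\log|T^{m,n}| \le \frac1{m-n}\sum_{j=n}^{m-1}\log^+|T_j|$), into the projection identity. Once Claim \ref{cla:slowSpaceCauchy} is in hand, completeness of $(\Gc^m(\Bc), d_H)$ (Proposition \ref{prop:grassProps}) yields a limit subspace $\underline F^\infty$, which one then identifies with $\underline F$ and uses to extract the growth dichotomy.
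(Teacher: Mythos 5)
Your plan is a direct comparison between $F_n^1$ and $F_m^1$: for a unit vector $v\in F_n^1$, take $w = \pi_{F_m^1 \ds E_m^1} v \in F_m^1$ and bound $|v-w| = |P_m^1 v|$ via the projection formula, factoring $T^m = T^{m,n}\circ T^n$. The algebra up to $|P_m^1 v| \leq |(T^m|_{E_m^1})^{-1}|\cdot \sqrt{m}\cdot |T^{m,n}|\cdot |T^n v|$ is fine, and you correctly flag the intermediate-time norm $|T^{m,n}|$ as the crux. But then you dismiss it with two false assertions. First, $\tfrac1m\log|T^m|\to L_1 = \ol$, not $0$ (this is hypothesis (2) with $q=1$). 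Second, and more seriously, $\tfrac1m\sum_{j=n}^{m-1}\log^+|T_j|\to 0$ does \emph{not} follow from hypothesis (1): that hypothesis only gives $\log^+|T_j| = o(j)$, so the sum is only $o(m^2)$. Concretely, $\log^+|T_j| = \sqrt{j}$ is permitted, yet $\tfrac1m\sum_{j\le m}\sqrt j \sim \tfrac23\sqrt m \to\infty$. More fundamentally, in the trajectory-wise setting of Proposition \ref{prop:staticMET} there is nothing linking $|T^{m,n}|$ to the quantities $V_q(T^n)$, which only constrain products started at time $0$; so the factor $|T^{m,n}|$ genuinely cannot be absorbed, and when $m$ grows much faster than $n$ your exponent bookkeeping does not close. (Your remark that the dangerous regime is ``$m$ close to $n$'' has it backwards.)

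The paper avoids forming any long intermediate product. It compares only \emph{consecutive} subspaces: for unit $v\in F_n^1$, $d(v,F_{n+1}^1)\le |P_{n+1}^1 v|$, and the projection formula gives
\[
|(T^{n+1}|_{E_{n+1}^1})^{-1}|^{-1}\,|P_{n+1}^1 v|\ \leq\ |P_{n+1}^2|\cdot |T_n|\cdot |T^n v|\ \leq\ \sqrt m\cdot |T_n|\cdot |T^n|_{F_n^1}|\,,
\]
where the only new factor is the single-step norm $|T_n|$, which is exactly what hypothesis (1) controls. Combining with \eqref{eq:minEn1Expansion} and \eqref{eq:Fn1control} yields a gap estimate $\d(F_n^1, F_{n+1}^1) \lesssim e^{n(\ul-\ol+\d)}$; Item 3 of Lemma \ref{lem:apertureProps} upgrades this to a two-sided Hausdorff bound on consecutive terms; and the triangle inequality plus the geometric series $\sum_{l\ge n} e^{l(\ul-\ol+\d)}$ (convergent since $\ul - \ol < 0$) gives the bound uniformly over $m\ge n$ after a harmless adjustment of $\d$. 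Your proposal can be repaired by switching to this consecutive-step decomposition; as written, the direct-comparison estimate does not go through.
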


\begin{proof}[Proof of Claim \ref{cla:slowSpaceCauchy}]

Fix $\d > 0$.  For $v \in F_n^1, |v| = 1$, observe that $d(v, F_{n + 1}^1) \leq | P_{n + 1}^1 v|$, and that
\begin{align}
|(T^{n + 1}|_{E_{n + 1}^1})^{-1}|^{-1} \cdot |P_{n + 1}^1 v| &\leq |T^{n + 1} P_{n + 1}^1 v| \leq |P_{n + 1}^2| \cdot |T_n \circ T^n v| \nonumber \\
& \leq |P_{n +1}^2| \cdot |T_n| \cdot |T^n v| \leq |P_{n + 1}^2| \cdot |T_n| \cdot |T^n|_{F_n^1}| \nonumber \\
\Rightarrow |P^1_{n+1}| &\leq |(T^{n+1}|_{E_{n+1}^1})^{-1}| \cdot |P_{n+1}^2| \cdot |T_n| \cdot |T^n|_{F_n^1}|
\end{align}
Since the bound on $|P_{n+1}^1 v|$ holds over all unit vectors $v \in F_n^1$, we have shown the inequality
\[
\d(F_n^1, F_{n + 1}^1) \leq \sqrt{m} \cdot |T_n| \cdot |T^n|_{F_n^1}| \cdot |(T^{n + 1}|_{E_{n + 1}^1})^{-1}|,
\]
where $\d$ refers to the gap between subspaces; see Section 2.1.2. Using \eqref{eq:minEn1Expansion} and \eqref{eq:Fn1control}, and the hypothesis that $\lim_{n \to \infty} \frac{1}{n} \log^+ |T_n| = 0$, observe that
\begin{align} \label{eq:convergeRate}
\limsup_{n \to \infty} &\frac{1}{n} \log \bigg( \sqrt{m} \cdot |T_n| \cdot |T^n|_{F_n^1}| \cdot |(T^{n + 1}|_{E_{n + 1}^1})^{-1}| \bigg)  \\
& = 0 + \lim_{n \to \infty} \frac{1}{n} \log \frac{V_{m + 1}(T^n)}{V_m(T^n)} + \lim_{n \to \infty} \frac{1}{n} \log \frac{|T^n|^{m-1}}{V_m(T^n)} \nonumber\\
& = \ul - \ol. \nonumber
\end{align}

Let $N_1 \in \N$ be such that for any $n \geq N_1$, we have that the parenthetical quantity in \eqref{eq:convergeRate} is less than or equal to $e^{n (\ul - \ol + \d)}$. Let $N_2 \geq N_1$ be such that $e^{n (\ul - \ol + \d)} \leq \frac{1}{2 m}$; applying Lemma \ref{lem:apertureProps}, we obtain $\d(F_{n + 1}^1, F_n^1) \leq 2 m e^{n (\ul - \ol + \d)}$, hence $d_H(F_n^1, F_{n + 1}^1) \leq 4 m e^{n (\ul - \ol + \d)} $.

The triangle inequality for $d_H$ now implies that for any $ m \geq n \geq N_2$, we have
\begin{align}\label{eq:finalEst}
d_H(F_n^1, F_m^1) \leq 4 m \sum_{l = n}^{\infty} e^{l (\ul - \ol + \d)} \leq \bigg( \frac{4m}{1 - e^{\ul - \ol + \d}} \bigg) e^{n (\ul - \ol + \d)}.
\end{align}
This implies Claim \ref{cla:slowSpaceCauchy}, on adjusting $\d$ to encapsulate the constant term in the parentheses on the RHS of \eqref{eq:finalEst}.
\end{proof}

From Claim \ref{cla:slowSpaceCauchy}, it follows that $\{F_n^1\}_n \subset \Gc^m(V_0)$ is Cauchy in $d_H$. $\Gc^m(V_0)$ is closed and $(\Gc(V_0), d_H)$ is complete (Proposition \ref{prop:grassProps}), and so $\{F_n^1\}$ has a limit $F \in \Gc^m(V_0)$. We now show that $ F = \underline{F}$, hence $\underline{F}$ is $m$-codimensional, by showing that vectors in $F$ have an asymptotic exponential growth rate at most $\ul$, hence $F \subset \underline{F}$, and that vectors in $V_0 \setminus F$ have the exponential asymptotic growth rate $\ol$, hence $\underline{F} \subset F$. 

\smallskip 
\noindent{\bf Showing $\limsup_{n \to \infty} \frac{1}{n} \log |T^n|_F| \leq \ul$.} Let $v \in F$ be a unit vector, and for each $n$, let $v_n \in F_n^1$ be a unit vector for which $|v - v_n| \leq 2 d_H(F_n^1, F)$. Then,
\[
|T^n v| \leq |T^n (v - v_n)| + |T^n v_n| \leq 2 |T^n| \cdot d_H(F_n^1, F) + |T^n|_{F_n^1}|.
\]
This yields
\begin{align*}
\limsup_{n \to \infty} \frac{1}{n} \log |T^n|_F| & \leq \max \big\{\limsup_{n \to \infty} \frac{1}{n} \log \big( |T^n| \cdot d_H(F_n^1, F) \big) , \limsup_{n \to \infty} \frac{1}{n} \log |T^n|_{F_n^1}| \big\} \\ 
& =  \max\{\ol + (\ul - \ol), \ul\} = \ul.
\end{align*}

\smallskip
\noindent{\bf Showing that for any $v \in V_0 \setminus F$, $\lim_{n \to \infty} \frac{1}{n} \log |T^n v| = \ol$.} Fix $v \in V_0 \setminus F$. Without loss, assume $|v| = 1$, and let $\eta = d(v, F)$; since $F$ is closed, $\eta > 0$. Let $n$ be large enough so that $d(v, F_n^1) \geq \eta/2$, and observe that with $v_n = P_n^1 v$, we have $|v_n| \geq d(v, F_n^1) \geq \eta/2$. Therefore,
\begin{align}\label{eq:fastVectorLB}
|T^n v| \geq |T^n v_n| - |T^n (v - v_n)| & \geq |(T^n|_{E_n^1})^{-1}|^{-1} \cdot |v_n| - |T^n|_{F_n^1}| \cdot |I - P_n^1| \nonumber \\
& \geq \frac{\eta}{2} |(T^n|_{E_n^1})^{-1}|^{-1} - 2 |T^n|_F| \cdot | P_n^1|.
\end{align}

By \eqref{eq:minEn1Expansion}, \eqref{eq:Pn1Control} and \eqref{eq:Fn1control}, the growth rate of the RHS of \eqref{eq:fastVectorLB} is at least $\ol$, and so the growth rate of $|T^n v|$ is at least $\ol$. 

For $\eta \in (0,1)$, that the growth on the cone $C_{\eta} = \{v \in V_0 :  d(v, \underline{F}) \geq \eta |v|\}$ is uniform, as in \eqref{eq:uniformGrowth}, is already apparent in the above proof.

Let $E \subset V_0$ be a complement to $\underline{F}$: we now check \eqref{eq:staticVolumeGrowth}. Note that by Lemma \ref{lem:angleForm}, for any $\eta \in (0, |\pi_{E \ds \underline{F}}|^{-1})$, we have $E \subset C_{\eta}$, and so by \eqref{eq:uniformGrowth}, it follows that $\lim_{n \to \infty} \frac{1}{n} \log |(T^n|_E)^{-1}| = - \ol$. Now, by Proposition \ref{prop:approxSVD}, $|(T^n|_E)^{-1}|^{-m} \lesssim \det(T^n|E) \lesssim |T^n|^m$: this yields \eqref{eq:staticVolumeGrowth}.
\end{proof}

\subsubsection{Proof of Lemma \ref{lem:lyapInduct}}

Here we will prove the induction step, Lemma \ref{lem:lyapInduct}. Throughout, we will assume that $T$ is a cocycle satisfying the hypotheses of Lemma \ref{lem:firstSubMET}. We will need the following Lemma, which says that a complement $E$ to $F_2(x)$ does not collapse too quickly onto $F_2(x)$ under the action of $T^n_x$.

\begin{lem}\label{lem:compAngle}
Let $x \in \Gamma$, and let $E \subset \Bc$ be a topological complement to $F_2(x)$. Then,
\begin{align}\label{eq:projAsymp}
\lim_{n \to \infty} \frac{1}{n} \log |\pi_{T^n_x E \ds T^n_x F_2(x)} | = 0.
\end{align}
\end{lem}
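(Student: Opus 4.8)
The statement to prove is Lemma \ref{lem:compAngle}: if $E$ is a topological complement to $\underline F := F_2(x)$, then $\frac1n \log|\pi_{T^n_x E \ds T^n_x F_2(x)}| \to 0$. The idea is to sandwich this growth rate between $0$ (below, since any projection has norm at least $1$) and $0$ (above, via a determinant/volume argument). So the real content is the upper bound $\limsup_n \frac1n \log |\pi_{T^n_x E \ds T^n_x \underline F}| \le 0$.

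\textbf{Key steps.} First, I would use Lemma \ref{lem:firstSubMET} and the induction hypothesis (or rather, at the level of this lemma, just the first-exponent MET from Proposition \ref{prop:staticMET}/Lemma \ref{lem:firstSubMET}): since $E$ is a complement to $F_2(x)$, we have $\dim E = m_1$ and $E \subset C_\eta(x)$ for small $\eta > 0$ by Lemma \ref{lem:angleForm}, so $\frac1n \log|(T^n_x|_E)^{-1}| \to -\l_1$ and $\frac1n \log \det(T^n_x|E) \to m_1 \l_1$ by \eqref{eq:firstSubVolGrow2}; in particular $T^n_x|_E$ is injective for large $n$, so $T^n_x E$ is genuinely $m_1$-dimensional and $T^n_x E \oplus T^n_x F_2(x) = \Bc$ is an algebraic splitting. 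Second, I would bound $|\pi_{T^n_x E \ds T^n_x F_2(x)}|$ from above using the block-determinant machinery: for a unit vector $w = T^n_x v / |T^n_x v|$ with $v \in E$, the quantity $|\pi_{T^n_x E \ds T^n_x F_2(x)}|$ is controlled (up to a $C_{m_1}$ factor) by $\sin\theta(T^n_x E, T^n_x F_2(x))^{-1}$, and one can estimate $\sin\theta(T^n_x E, T^n_x F_2(x))$ from below by comparing $\det(T^n_x | E \oplus \langle u\rangle)$ for $u \in F_2(x)$ with $\det(T^n_x|E)\cdot |T^n_x u|$ via Lemma \ref{lem:detSplit}. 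The upshot is an inequality of the shape
\[
|\pi_{T^n_x E \ds T^n_x F_2(x)}| \lesssim \frac{|T^n_x|_{F_2(x)}| \cdot V_{m_1}(T^n_x)}{|(T^n_x|_E)^{-1}|^{-1} \cdot \inf\{\det(T^n_x | E \oplus \langle u\rangle) : u \in S_{F_2(x)}\}}
\]
or, more directly, one relates $|\pi_{T^n_x E \ds T^n_x F_2(x)}|^{-1}$ to a ratio of the form $\det(T^n_x|E\oplus\langle u\rangle)/(\det(T^n_x|E)|T^n_x u|)$; taking $\frac1n\log$ and using $\frac1n\log|T^n_x u| \le \l_2 + o(1)$ for $u \in F_2(x)$ (from Lemma \ref{lem:firstSubMET}, $v \in F_2(x)$ grows at rate $\le \l_2$) together with $\frac1n\log\det(T^n_x|E) \to m_1\l_1$ and $\frac1n \log V_{m_1+1}(T^n_x) \to l_{m_1+1} = m_1\l_1 + \l_2$, everything cancels and the exponential growth rate is $0$.

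\textbf{Main obstacle.} The delicate point is getting a genuinely \emph{uniform} lower bound on $\det(T^n_x | E \oplus \langle u \rangle)$ over all unit $u \in F_2(x)$, or equivalently a uniform upper bound on the growth of $|\pi_{T^n_x E \ds T^n_x F_2(x)}|$ — the naive pointwise estimates give, for each fixed $u$, rate $\le 0$, but the projection norm is a supremum over an infinite-dimensional sphere. I expect the cleanest route is to avoid this by working directly with the projection: write $\pi_{T^n_x E \ds T^n_x F_2(x)} = T^n_x \circ \pi_{E \ds F_2(x)} \circ (T^n_x|_{E \oplus ?})^{-1}$-type identities are unavailable since $T^n_x$ isn't injective on $\Bc$, so instead bound $|\pi_{T^n_x E \ds T^n_x F_2(x)}|$ by $C_{m_1} \cdot |(T^n_x|_E)^{-1}| \cdot |T^n_x|$ crudely, which already gives growth rate $\le -\l_1 + l_1 = 0$ provided $\l_1 = l_1$ — wait, that needs care when $m_1 > 1$. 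More robustly: use the formula $|\pi_{T^n_x E \ds T^n_x F_2(x)}| \le \sqrt{m_1}/\sin\theta(T^n_x E, T^n_x F_2(x))$-style bounds combined with Lemma \ref{lem:detSplit} applied to the splitting $T^n_x(E \oplus \langle v\rangle)$ to convert the angle into the determinant ratio $\det(T^n_x|E\oplus\langle v\rangle)/(\det(T^n_x|E)\det(T^n_x|\langle v\rangle))$, then bound that below by $V_{m_1}(T^n_x)$-normalizations; the key cancellation $l_{m_1+1} - l_{m_1} - \l_2 = 0$ does the job, and the uniformity over $u$ is recovered because $|T^n_x|_{F_2(x)}|$ (the sup over $F_2(x)$) itself has growth rate exactly $\l_2$ by Lemma \ref{lem:firstSubMET}. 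I would structure the proof around that last observation: replace the troublesome $\inf_u |T^n_x u|$ by the benign $|T^n_x|_{F_2(x)}|$ wherever it appears, at the cost only of reversing an inequality, which is exactly what the block-determinant lemma's two-sided form permits.
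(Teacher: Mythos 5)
Your overall instincts are sound — the control should come from the uniform quantity $|T^n_x|_{F_2(x)}|$ rather than a pointwise $|T^n_x u|$, and a cancellation among the known growth rates should finish it — but the mechanism you propose for combining these cannot close. Both inequalities in Lemma \ref{lem:detSplit}, once rearranged, give only \emph{lower} bounds on a projection norm: the left-hand side yields $|\pi_{T^n_x E \ds T^n_x \langle v\rangle}|^{m_1} \gtrsim \det(T^n_x|E)\,|T^n_x v| / \det(T^n_x|E\oplus\langle v\rangle)$, and the right-hand side yields $|\pi_{E \ds \langle v\rangle}|^{m_1} \gtrsim$ the reciprocal ratio. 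Neither direction — nor any permutation of which subspace plays $E$ or $F$, nor replacing $A$ by $(A|_V)^{-1}$ — produces the \emph{upper} bound on $|\pi_{T^n_x E \ds T^n_x F_2(x)}|$ that the lemma actually requires: a determinant ratio being bounded above is the easy direction and simply does not preclude a near-degenerate angle in the image. So "the block-determinant lemma's two-sided form" does not permit the swap you want. Your fallback crude bound $|\pi_{T^n_x E \ds T^n_x F_2(x)}| \lesssim C_{m_1}\, |(T^n_x|_E)^{-1}|\cdot|T^n_x|$ is likewise not justified in the proposal — the obstruction is precisely the non-injectivity of $T^n_x$ on $\Bc$ that you flag yourself, since the standard way to pass a projection through the cocycle needs a global inverse.

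The paper's proof avoids determinants entirely. It computes $|\pi_{T^n_x E \ds T^n_x F_2(x)}|$ directly as $\sup\{|T^n_x e|/|T^n_x(e-f)| : e\in E,\ f\in F_2(x),\ |e-f|=1\}$, bounds the numerator by $|T^n_x|\cdot|\pi_{E\ds F_2(x)}|$, and bounds the denominator from below via the reverse triangle inequality by $|(T^n_x|_E)^{-1}|^{-1}|\pi_{E\ds F_2(x)}| - |T^n_x|_{F_2(x)}|\,(1+|\pi_{E\ds F_2(x)}|)$, using $|e|\le|\pi_{E\ds F_2(x)}|$ and $|f|\le 1+|\pi_{E\ds F_2(x)}|$. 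Since $\tfrac1n\log|T^n_x|$ and $\tfrac1n\log|(T^n_x|_E)^{-1}|^{-1}$ both tend to $\l_1$ (the latter from the cone estimate in Lemma \ref{lem:firstSubMET}) while $\limsup_n\tfrac1n\log|T^n_x|_{F_2(x)}|\le\l_2<\l_1$, numerator and denominator share the rate $\l_1$ and the ratio has rate $0$. The cancellation that does the work is $\l_1-\l_1=0$, not the $l_{m_1+1}-l_{m_1}-\l_2=0$ you had in mind. (Note also that what you cite as $|T^n_x|_{F_2(x)}|$ having rate exactly $\l_2$ "by Lemma \ref{lem:firstSubMET}" is, strictly, the upper bound $\limsup_n\tfrac1n\log|T^n_x|_{F_2(x)}|\le\l_2$, which is established inside the proof of Proposition \ref{prop:staticMET}; only the upper bound is needed here.)
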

\begin{proof}[Proof of Lemma \ref{lem:compAngle}]

Observe that
\[
|\pi_{T^n_x E \ds T^n_x F_2(x)}| = \sup_{e \in E, f \in F_2(x), |e - f| = 1} \frac{|T^n_x e|}{|T^n_x (e -f)|} \leq \sup_{e \in E, f \in F_2(x), |e - f| = 1} \frac{|T^n_x e|}{|T^n_x e| -|T^n_x f|}.
\]
When $|e - f| = 1$, we have $|e| \leq |\pi_{E \ds F_2(x)}|, |f| \leq  1 +  |\pi_{E \ds F_2(x)}|$. Write $p = |\pi_{E \ds F_2(x)}|$. Then,
\begin{align}\label{eq:projControl}
|\pi_{T^n_x E \ds T^n_x F_2(x)}| & \leq  \frac{|T^n_x | \cdot p}{|(T^n_x|_E)^{-1}|^{-1} \cdot p -|T^n_x |_{F_2(x)}| \cdot (1 +  p)},
\end{align}
where $T^n_x|_E$ is considered as a map $E \to T^n_x E$.

Lemma \ref{lem:firstSubMET} implies that $\lim_{n \to \infty} \frac{1}{n} \log |(T^n_x|_E)^{-1}|^{-1} = \l_1$, since $E \subset C_{\eta}(x)$ for any $\eta \in (0, |\pi_{E \ds F}|)$ (Lemma \ref{lem:angleForm}). Therefore, the numerator and denominator on the RHS of \eqref{eq:projControl} both grow at the exponential rate $\ol$, which implies \eqref{eq:projAsymp}.
\end{proof}

\begin{proof}[Proof of Lemma \ref{lem:lyapInduct}]

We will assume that $l_{m_1 + q} > - \infty$. The proof is similar otherwise, and we omit it.

Let $q \in \N$. We will show separately that $\limsup_{n \to \infty} \frac{1}{n} \log V_q(T^n_x|_{F_2(x)}) \leq l_{m_1 + q} - l_{m_1}$ and that $\liminf_{n \to \infty} \frac{1}{n} \log V_q(T^n_x|_{F_2(x)}) \geq l_{m_1 + q} - l_q$.

\smallskip
\noindent{\bf Lower bound on $\liminf_{n \to \infty} \frac{1}{n} \log V_q(T^n_x|_{F_2(x)})$.} For each $n$, let $H_n \subset \Bc$ be an $(m_1 + q)$-dimensional subspace for which $\det(T^n_x | H_n) \geq \frac{1}{2} V_{m_1 + q}(T^n_x)$. Observe that $H_n \cap F_2(x)$ has dimension $\geq q$; let $G_n \subset H_n \cap F_2(x)$ be any $q$-dimensional subspace. Let $J_n$ be a complement to $G_n$ inside $H_n$ of dimension $m_1$ for which $|\pi_{G_n \ds J_n}| \leq \sqrt{q}$ (Lemma \ref{lem:compExist}).

Using Lemma \ref{lem:detSplit}, we now estimate:
\[
\frac{1}{2} V_{m_1 + q}(T^n_x) \leq \det(T^n_x | H_n) \leq C_q |\pi_{G_n \ds J_n}|^q \det(T^n_x | G_n) \cdot \det(T^n_x | J_n) \leq C_q' V_q(T^n_x|_{F_2(x)}) \cdot V_{m_1}(T^n_x)
\]
for constants $C_q, C_q'$ depending on $q$ alone, yielding
\begin{align}\label{eq:lowerGrowBound}
\liminf_{n \to \infty} \frac{1}{n} \log V_q(T^n_x|_{F_2(x)}) \geq l_{m_1 + q} - l_{m_1}.
\end{align}

\smallskip
\noindent{\bf Upper bound on $\limsup_{n \to \infty} \frac{1}{n} \log V_q(T^n_x|_{F_2(x)})$.}  For each $n$, let $G_n \subset F_2(x)$ be a $q$-dimensional subspace for which $\det(T^n_x | G_n) \geq \frac{1}{2} V_q(T^n_x|_{F_2(x)})$. Observe that $T^n_x G_n$ is always $q$-dimensional because $V_q(T^n_x|_{F_2(x)}) > 0$ for any $n$, which follows from \eqref{eq:lowerGrowBound} and the assumption $l_{m_1 + q} > - \infty$.

Let $E \subset \Bc$ be any topological complement to $F_2(x)$, so that $\dim(E \oplus G_n) = m_1 + q$ for any $n$. As $|\pi_{T^n_x E \ds T^n_x G_n}| \leq |\pi_{T^n_x E \ds T^n_x F_2(x)}|$, we obtain
\[
V_{m_1 + 1}(T^n_x) \geq \det(T^n_x | E \oplus G_n) \geq C_{m_1}^{-1} |\pi_{T^n_x E \ds T^n_x F_2(x)}|^{-m_1} \det(T^n_x | E) \det(T^n_x | G_n)
\]
by Lemma \ref{lem:detSplit}, with $C_{m_1}$ depending on $m_1$ alone. We conclude that
\[
l_{m_1 + q} + m_1 \limsup_{n \to \infty} \frac{1}{n} \log |\pi_{T^n_x E \ds T^n_x G_n}| - \liminf_{n \to \infty} \frac{1}{n} \log \det(T^n_x | E) \geq \limsup_{n \to \infty} \frac{1}{n} \log V_q(T^n_x|_{F_2(x)}).
\]
The second term of the LHS equals zero by Lemma \ref{lem:compAngle}, and the $\liminf$ in the third term is a limit, equalling $l_{m_1}$ by \eqref{eq:firstSubVolGrow2}.
\end{proof}

\subsubsection{Proof of the Volume Growth Clause in Theorem \ref{thm:volMET}}

We finish the paper by proving \eqref{eq:correctVolGrow} from Theorem \ref{thm:volMET}.

\begin{lem}\label{lem:rightVolGrow}
For any $x \in \Gamma$ and for any Lyapunov exponent $\l_{i + 1}, i \geq 1$, if $E$ is any complement to $F_{i + 1}(x)$, then
\begin{align}\label{eq:volGrowLemma}
\lim_{n \to \infty} \frac{1}{n} \log \det(T^n_x | E) = \sum_{j = 1}^i m_j \l_j.
\end{align}
\end{lem}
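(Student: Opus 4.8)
The natural approach is induction on $i$, using the already-established pieces: the base case $i=1$ is exactly \eqref{eq:firstSubVolGrow2} from Lemma \ref{lem:firstSubMET}. For the inductive step, suppose \eqref{eq:volGrowLemma} holds for $i-1$, and fix a complement $E$ to $F_{i+1}(x)$. The key idea is to split $E$ into a piece that "sees" the exponent $\l_i$ and a piece that lies in (a complement to $F_i(x)$). Concretely, choose a splitting $E = E' \oplus G$ where $E'$ is $M_i$-dimensional and complements $F_i(x)$ inside $F_i(x) \oplus E'$, and $G$ is $m_i$-dimensional with $G \subset F_i(x)$; such a decomposition exists since $\codim F_{i+1}(x) = M_{i+1}$, $\codim F_i(x) = M_i$, and $F_{i+1}(x) \subset F_i(x)$, so $F_i(x)/F_{i+1}(x)$ is $m_i$-dimensional, and one can pick representatives $G$ inside $F_i(x)$. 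Then $E' \oplus F_i(x)$ is a complement to... wait — more carefully, $E' \oplus F_i(x) = \Bc$ requires $\dim E' = M_i = \codim F_i(x)$, which holds.

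The main computation is then to apply the block-determinant estimate, Lemma \ref{lem:detSplit}, to the splitting $E = E' \oplus G$ under $T^n_x$:
\begin{align*}
\frac{1}{|\pi_{T^n_x E' \ds T^n_x G}|^{M_i}} \lesssim \frac{\det(T^n_x | E)}{\det(T^n_x | E')\det(T^n_x | G)} \lesssim |\pi_{E' \ds G}|^{M_i}.
\end{align*}
Taking $\tfrac1n\log$ and $n\to\infty$, the upper bound contributes nothing since $|\pi_{E' \ds G}|$ is a fixed constant. For the lower bound I need $\tfrac1n\log|\pi_{T^n_x E' \ds T^n_x G}| \to 0$; this should follow from an analogue of Lemma \ref{lem:compAngle} applied at level $i$ rather than level $1$ — namely that a complement to $F_i(x)$ does not collapse onto $F_i(x)$ faster than exponential rate $0$, using that vectors in such a complement grow at rate exactly $\l_1$ (hence at rate $\geq \l_i$, and in fact one only needs that the denominator in the bound analogous to \eqref{eq:projControl} does not decay), while $T^n_x|_{F_i(x)}$ has operator norm growing at rate $\leq \l_i$. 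Actually since $G \subset F_i(x)$ and $E'$ complements $F_i(x)$, we have $|\pi_{T^n_x E' \ds T^n_x G}| \leq |\pi_{T^n_x E' \ds T^n_x F_i(x)}|$, and the latter has growth rate $0$ by the level-$i$ version of Lemma \ref{lem:compAngle}, whose proof is identical after substituting $F_2(x) \rightsquigarrow F_i(x)$, $\l_1 \rightsquigarrow \l_i$ (using that $E'\subset C_\eta$ relative to $F_i(x)$ and the uniform growth statement for the shifted cocycle established in the induction of the previous subsection).

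It remains to evaluate the two factors on the right. First, $\det(T^n_x|G)$: since $G \subset F_i(x)$ is $m_i$-dimensional and, by the induction in Section 3.2.2, $T_0 := T_x|_{F_i(x)}$, $T_n := T_{f^nx}$ satisfies the hypotheses of Proposition \ref{prop:staticMET} with $\ol = \l_i$, $\ul = \l_{i+1}$, $m = m_i$, and slow space $F_{i+1}(x)$ — and $G$ is a complement to $F_{i+1}(x)$ inside $F_i(x)$ — equation \eqref{eq:staticVolumeGrowth} gives $\tfrac1n\log\det(T^n_x|G) \to m_i\l_i$. Second, $\det(T^n_x|E')$: here $E'$ is a complement to $F_i(x)$, so by the inductive hypothesis \eqref{eq:volGrowLemma} at level $i-1$, $\tfrac1n\log\det(T^n_x|E') \to \sum_{j=1}^{i-1} m_j\l_j$. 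Adding these, $\tfrac1n\log\det(T^n_x|E) \to \sum_{j=1}^{i-1}m_j\l_j + m_i\l_i = \sum_{j=1}^i m_j\l_j$, completing the induction. The main obstacle is bookkeeping: making sure the chosen splitting $E = E'\oplus G$ is legitimate (i.e. $E'$ genuinely complements $F_i(x)$ and $G\subset F_i(x)$ complements $F_{i+1}(x)$ within it) and that the level-$i$ analogue of Lemma \ref{lem:compAngle} is justified by citing the induction already carried out — no genuinely new estimate is needed, only careful reuse of \eqref{eq:staticVolumeGrowth}, Lemma \ref{lem:detSplit}, and Lemma \ref{lem:compAngle}.
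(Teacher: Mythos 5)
Your approach coincides with the paper's in the base case $i=2$ (which is in fact the only case the paper writes out explicitly), but for $i \geq 3$ your choice of splitting creates a genuine gap. You take $E = E' \oplus G$ with $E'$ (dimension $M_i$) complementing $F_i(x)$ and $G \subset F_i(x)$, so after Lemma~\ref{lem:detSplit} the projection you must control is $|\pi_{T^n_x E' \,\ds\, T^n_x F_i(x)}|$, and you assert that a ``level-$i$'' version of Lemma~\ref{lem:compAngle} handles this by the same proof with $F_2 \rightsquigarrow F_i$, $\l_1 \rightsquigarrow \l_i$. That does not work. The proof of Lemma~\ref{lem:compAngle} succeeds because in the bound \eqref{eq:projControl} the numerator $|T^n_x|$ (rate $\l_1$) and the denominator's dominant term $|(T^n_x|_E)^{-1}|^{-1}$ (the minimal expansion on the complement $E$) both grow at exactly rate $\l_1$, the latter because $E$ complements $F_2(x)$ and hence sits in a cone $C_\eta(x)$ where the uniform-growth clause of Proposition~\ref{prop:staticMET} applies. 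But a complement $E'$ to $F_i(x)$ with $i\ge 3$ necessarily meets $F_{i-1}(x)$ in a subspace of dimension at least $m_{i-1}$, so the minimal expansion $|(T^n_x|_{E'})^{-1}|^{-1}$ grows only at rate $\l_{i-1} < \l_1$, while the numerator still grows at rate $\l_1$; the resulting bound on the projection norm grows at rate $\l_1 - \l_{i-1} > 0$, which is useless. The level-$i$ compAngle statement is true, but establishing it needs a genuinely new argument (e.g.\ an induction through the levels, splitting $E' = E_1 \oplus (E' \cap F_2(x))$ and combining the level-$1$ compAngle with the inductive statement inside $F_2(x)$) — not the trivial substitution you invoke.

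The paper sidesteps this entirely by peeling off the \emph{top} layer rather than the bottom: it sets $E_2 := F_2(x) \cap E$ (of dimension $m_2 + \cdots + m_i$) and lets $E_1$ (dimension $m_1$) be a complement to $E_2$ in $E$, so that $E_1$ complements $F_2(x)$ in $\Bc$. The projection to control is then $\pi_{T^n_x E_1 \,\ds\, T^n_x E_2}$, majorized by $\pi_{T^n_x E_1 \,\ds\, T^n_x F_2(x)}$, which is precisely what Lemma~\ref{lem:compAngle} (at level $1$) bounds; the induction then lives in the evaluation of $\det(T^n_x | E_2)$, handled via \eqref{eq:staticVolumeGrowth} for the shifted cocycle over $F_2(x)$ (and, for general $i$, by the lemma at level $i-1$ for the restricted cocycle). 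That route never needs compAngle beyond level $1$, which is why the paper can present Lemma~\ref{lem:compAngle} once and be done. If you want to keep your bottom-up split, you must first prove the level-$i$ compAngle by such an induction; otherwise, switch to the paper's top-down split.
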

\begin{proof}
The ideas for the proof are already present in the case when $E$ is a complement to $F_3(x)$ (that is, $i = 2$), and so we concentrate on that case.

The first step is to find a splitting of $E$ of the form $E = E_1 \oplus E_2$, where $E_1$ is a complement to $F_2(x)$ in $\Bc$ and $E_2 \subset F_2(x)$. We set $E_2 := F_2(x) \cap E$, and let $E_1$ be a complement to $E_2$ in $E$; it is not hard to check that because $F_2(x) \supset F_3(x)$ and $\Bc = E \oplus F_3(x)$, one has that $\dim E_2 = \dim E - \codim F_2(x) = m_1$, so that $\Bc = E_1 \oplus F_2(x)$ and $F_2(x) = E_2 \oplus F_3(x)$ hold. 

By \eqref{eq:staticVolumeGrowth} in Proposition \ref{prop:staticMET}, we know that for $j = 1,2$,
\[
\lim_{n \to \infty} \frac{1}{n} \log \det(T^n_x | E_j) = m_j \l_j.
\]

We will now use the `block determinant' estimate of Lemma \ref{lem:detSplit}, after first checking that $|\pi_{T^n_x E_1 \ds T^n_x E_2}|$ does not grow too quickly. But $T_x F_2(x) \subset F_2(f x)$ holds for any $x \in \Gamma$, and so $\pi_{T^n_x E_1 \ds T^n_x E_2} = \pi_{T^n_x E_1 \ds F_2(f^n x)}|_{T^n_x E}$. From there, it follows that $\lim_{n \to \infty} \frac{1}{n} \log |\pi_{T^n_x E_1 \ds T^n_x E_2}| = 0$
 by \eqref{eq:projAsymp} in Lemma \ref{lem:compAngle}. Now, by Lemma \ref{lem:detSplit},
\[
\frac{1}{C_{m_1 + m_2} |\pi_{T^n_x E_1 \ds T^n_x E_2}|^{m_1} } \leq \frac{\det(T^n_x | E_1 \oplus E_2)}{\det(T^n_x | E_1) \cdot \det(T^n_x | E_2)} \leq C_{m_1 + m_2} |\pi_{E_1 \ds E_2}|^{m_1} ,
\]
where $C_{m_1 + m_2}$ depends on $m_1 + m_2$ alone; on taking logs and limits, this yields \eqref{eq:volGrowLemma}.
\end{proof}

\begin{center}{\bf  Appendix A: Proof of Proposition \ref{prop:openCond}}\end{center}

\smallskip
Here we record a proof of Proposition \ref{prop:openCond}.
\smallskip

\noindent{\bf Proposition 2.7.} \textit{Let $\Bc = E \oplus F$ be a topological splitting. For any $E' \in \Gc(\Bc)$ for which $d_H(E, E') < |\pi_{E \ds F}|^{-1}$, we have that $\Bc = E' \oplus F$ is a topological splitting.
The projection $\pi_{E' \ds F}$ satisfies}
\begin{align}\label{eq:graphNorm2}
|\pi_{F \ds E'}|_E| \leq \frac{2 d(E, E')}{|\pi_{E \ds F}|^{-1} - d(E, E')}.
\end{align}

\begin{proof}
For simplicity we will write $d(\cdot, \cdot)$ instead of $d_H(\cdot, \cdot)$. To prove that $E'$ complements $F$, we shall show all of the following:
\begin{enumerate}
\item $E' \cap F = \{0\}$ when $d(E, E') < |\pi_{E \ds F}|^{-1}$.
\item The (possibly proper) subspace $E' \oplus F$ is closed when $d(E, E') < |\pi_{E \ds F}|^{-1}$.
\item If the subspace $E' + F$ is closed and $d(E, E') < 1$, then $E' + F = \Bc$.
\end{enumerate}

\noindent {\bf Item 1.} Suppose $E' \cap F \ne \{0\}$. Then for any $e' \in E' \cap F$ with $|e'|=1$,
there exists $e \in E$ with $|e|=1$ such that $|e-e'| < |\pi_{E \ds F}|^{-1}$, hence
\[
1 = |e| = |\pi_{E \ds F} e|  = |\pi_{E \ds F} (e - e')| \leq |\pi_{E \ds F}| \cdot |e - e'| < 1,
\]
which is a contradiction.

\smallskip \noindent
{\bf Item 2.} It will suffice to show that there exists $a > 0$ such that for any $e' \in E', f \in F$, we have
\begin{align} \label{eq:verifyKober}
|e'| \leq a |e' + f|.
\end{align}
This is known as the Kober criterion \cite{Ko}; for a more modern reference, see Chapter VII, \S3, Paragraph (4) of \cite{Day}. Indeed, if \eqref{eq:verifyKober} holds and $x_n = e'_n + f_n$ is a Cauchy sequence, then $e_n'$ and $f_n$ are Cauchy, and thus converge to some $e' \in E', f \in F$ respectively, by the closedness of each of $E', F$, hence with $x := e' + f \in E' + F$, we have $x_n \to x $.

We prove \eqref{eq:verifyKober} directly: Let $e' \in E'$ and $f \in F$.
Fix $\a > 1$ for which $\a d(E, E') < |\pi_{E \ds F}|$, and let $e \in E, |e|  = |e'|$ be 
such that $|e - e'| \leq |e'| \cdot  \a d(E, E')$, so that
\[
|e' + f| \geq |e + f| - |e - e'| \geq |e'| (|\pi_{E \ds F}|^{-1} - \a d(E, E')) =: a^{-1} |e'| > 0 .
\]

\smallskip \noindent
{\bf Item 3.} Seeking a contradiction, assume that $E' \oplus F$ is a proper subspace of $\Bc$. Fix $\a < 1 < \beta$ such that $\beta d(E, E') < \a$ (which we can do, since $d(E, E') < 1$). Since $E' \oplus F$ is closed (by Item 2), the Riesz lemma (Lemma 4.7 of \S 4.1 in \cite{S}) asserts that there exists $x \in \Bc$ with $|x|=1$ such that $|x -  (e' + f)| \geq \a$ for all $e' \in E', f \in F$. On the other hand, since $\Bc = E \oplus F$, we have that $x = e + f$ for some $e \in E, f \in F$. Let $e' \in E'$ be such that $|e - e'| \leq \beta d(E, E')$. Then
\[
|x - (e' + f)| = |e - e'| \leq \beta d(E, E') < \a,
\]
which is a contradiction.

We now prove \eqref{eq:graphNorm2}. First, one can show directly from the Definition \ref{defn:angle} of the minimal angle $\theta$ that $\sin \theta(E', F) \geq \sin \theta(E, F) - d(E, E')$, whence
\begin{align}\label{eq:projectionNormEstimate}
|\pi_{E' \ds F}| \leq \big( |\pi_{E \ds F}|^{-1} - d(E, E')\big)^{-1}
\end{align}
by Lemma \ref{lem:angleForm}. Now, fix $e \in E$ and $\a > 0$. Let $e' \in E', |e'| = 1$ for which $|e - e'| \leq \a \cdot d(E, E')$. Using \eqref{eq:projectionNormEstimate}, we estimate
\[
|\pi_{F \ds E'} e| = |\pi_{F \ds E'} (e - e')| \leq |\pi_{F \ds E}| \cdot |e - e'| \leq 2 \cdot \big( |\pi_{E \ds F}|^{-1} - d(E, E')\big)^{-1} \cdot d(E, E'),
\]
which completes the proof.
\end{proof}

\begin{center}
{\bf Appendix B: Measurability of slow-growing subspaces}
\end{center}

In this appendix, we formulate and prove a measurability result (Lemma \ref{lem:measurabilitySubspace} below) for slow-growing Oseledets subspaces. We note that a potential source of complication is that the Banach space $\Bc$ may not be separable.

\medskip

The following notion of measurability is suitable for our purposes.
\begin{defn}\label{defn:measurability}
Let $(Y, \mathcal H)$ be a measurable space and let $Z$ be a metric space. We say that a mapping $\Phi : Y \to Z$ is \emph{measurable} if $\Phi$ is the pointwise limit of a sequence of finite-valued measurable maps $\Phi_n : Y \to Z$.
\end{defn}
\noindent The limit of a pointwise-convergent sequence of uniformly measurable mappings is itself uniformly measurable (see Chapter III, \S 0 of \cite{castaing2006convex}).

\medskip

In what follows, $(X, \Fc, \mu)$ is an ergodic mpt and $T : X \to L(\Bc)$ is a linear cocycle obeying the hypotheses of Theorem \ref{thm:volMET};  we write $\{\l_i\}$ for the Lyapunov exponents of $T$. We consider the measurable space $(\Gamma, \Fc|_\Gamma)$, where $\Gamma \subset X$ is defined in the proof of Theorem \ref{thm:volMET} and $\Fc|_{\Gamma}$ denotes the restriction of the sigma algebra $\Fc$ to $\Gamma$.

We let $\mathscr C$ denote the set of closed, bounded subsets of $\Bc$, which we regard as a metric space with the Hausdorff metric $d_H$ as defined in Section 2.1.2. We write $B_1 = \{v \in \Bc : |v| \leq 1\}$ for the unit ball of $\Bc$.

\medskip

\begin{lem}\label{lem:measurabilitySubspace}
The mapping $x \mapsto B_1 \cap F_{\l_i}(x)$ is measurable as a map $(\Gamma, \Fc|_{\Gamma}) \to \mathscr C$.
\end{lem}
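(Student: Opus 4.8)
The plan is to build finite-valued approximations to the map $x \mapsto B_1 \cap F_{\l_i}(x)$ directly from the construction of the subspaces $F_{\l_i}(x)$ carried out in the proof of Theorem \ref{thm:volMET}. Recall that, for $x \in \Gamma$, the codimension of $F_i(x) = F_{\l_i}(x)$ equals $M_i = m_1 + \cdots + m_{i-1}$, and (from the proof of Proposition \ref{prop:staticMET}, applied iteratively as in the Exponent Extraction Lemma) $F_i(x)$ is realized as the $d_H$-limit $\lim_{n \to \infty} F_n^1$ of the finite-codimensional subspaces $F_n^1 = \{v : T^n_{x}|_{F_{i-1}(x)} v \in F_n^2\}$, where $F_n^2$ is a complement to the image of a near-maximizing $M_i$-dimensional subspace $E_n^1$. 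Moreover, Claim \ref{cla:slowSpaceCauchy} gives an explicit exponential rate for this convergence. The first step is therefore to record that $x \mapsto B_1 \cap F_i(x)$ is the pointwise $d_H$-limit of $x \mapsto B_1 \cap F_n^1(x)$, so it suffices to show each of the latter maps is measurable in the sense of Definition \ref{defn:measurability}; and since, by induction on $i$, we may assume $x \mapsto B_1 \cap F_{i-1}(x)$ is already measurable, we only need to handle one extraction step at a time.

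The second, and main, step is to make measurable selections of all the auxiliary data appearing in the construction of $F_n^1(x)$, using the uniform measurability hypothesis \eqref{eq:measCondition}. Using the finite-valued approximants $T^{(m)}_x \to T_x$ and Corollary \ref{cor:volGrowMeas} (measurability of $x \mapsto V_q(T^n_x)$ and of $x \mapsto V_q(T^n_x|_{F_{i-1}(x)})$, the latter via the inductive hypothesis), one selects in a piecewise-constant-in-$x$ fashion: an $M_i$-dimensional subspace $E_n^1(x)$ nearly maximizing $\det(T^n_x|\cdot)$; a complement $F_n^2(x)$ to $T^n_x E_n^1(x)$ with $|\pi_{E_n^2 \ds F_n^2}| \le \sqrt{M_i}$ (Lemma \ref{lem:compExist}); and then $F_n^1(x) = (T^n_x|_{F_{i-1}(x)})^{-1}(F_n^2(x))$ via the projection formula \eqref{eq:projectionForm}. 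Because $T$ is only approximated by finite-valued maps (not itself finite-valued), one does this on finitely many pieces of $X$ at approximation level $m$, obtaining for each $(n,m)$ a finite-valued map $x \mapsto B_1 \cap \widetilde F_{n,m}^1(x) \in \mathscr C$; Lemma \ref{lem:measDet}, Proposition \ref{prop:openCond}, and the quantitative estimates \eqref{eq:graphNormEst}, \eqref{eq:imageSubComp} then show that as $m \to \infty$ the resulting sets converge in $d_H$ to $B_1 \cap F_n^1(x)$ for each fixed $n$, uniformly enough that a diagonal choice $m = m(n)$ yields finite-valued maps converging pointwise to $x \mapsto B_1 \cap F_i(x)$.

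The key technical point — and the step I expect to be the main obstacle — is carrying out the selection of $E_n^1(x)$ (and of the complements) \emph{measurably} without separability of $\Bc$, since one cannot simply invoke a Kuratowski--Ryll-Nardzewski selection theorem on $\Gc_{M_i}(\Bc)$. The way around this is that one never needs to select from an abstract measurable multifunction: at approximation level $m$ the operator $T^{(m)}_x$ takes only finitely many values, so on each of the finitely many level sets $\{x : T^{(m)}_x = A_j,\ \dots\}$ the maximizing subspace, its complement, and the resulting $F_n^1$ can be chosen to be literally constant in $x$ (an arbitrary but fixed choice per level set). Intersecting these finitely many measurable level sets over the finitely many operators involved in $T^n$ keeps everything finite-valued, and Lemma \ref{lem:measDet} guarantees that these genuinely piecewise-constant objects approximate the true ones as $m \to \infty$. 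The remaining bookkeeping — checking that $d_H(B_1 \cap U, B_1 \cap V)$ is controlled by $d_H(U,V)$ together with $\sin\theta$-data (via Lemma \ref{lem:apertureProps} and Proposition \ref{prop:openCond}), and that each of the above selections enters $F_n^1(x)$ Lipschitz-continuously in the relevant norms — is routine given the estimates assembled in Section 2, so I would state those continuity facts as small sublemmas and not belabor their proofs.
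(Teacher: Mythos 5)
Your plan takes a genuinely different route from the paper's, and as written it has a gap. The paper's proof never touches the auxiliary data $E_n^1$, $F_n^2$ from Proposition~\ref{prop:staticMET} at all: it introduces the sublevel sets $\Sc_c(A) := \{v \in B_1 : |A v| \leq c\}$, observes that $A \mapsto \Sc_c(A)$ is $c^{-1}$-Lipschitz in the operator norm (so $x \mapsto \Sc_c(T^n_x)$ is automatically measurable from \eqref{eq:measCondition}, with no measurable selection required anywhere), and proves directly via the uniform cone estimates of Proposition~\ref{prop:staticMET} that $B_1 \cap F_{\l_i}(x)$ is the $d_H$-limit of $\Sc_{e^{n(\l_i + \d)}}(T^n_x)$. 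That is the entire argument; selection of near-maximizers, complements, and inductive composition of approximants are all sidestepped.

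The gap in your route is the diagonal step $m = m(n)$. First, for fixed $n$ the ``target'' $F_n^1(x)$ is not canonically defined --- it depends on an arbitrary choice of near-maximizing $E_n^1$ --- so ``$\widetilde F^1_{n,m}(x) \to B_1 \cap F_n^1(x)$ as $m \to \infty$'' is not well-posed, and the intermediate limit you want to pass through need not exist. Second, and more fundamentally, even if you reinterpret the goal as $\widetilde F^1_{n,m(n)}(x)$ being eventually $d_H$-close to $B_1 \cap F_{\l_i}(x)$, you need $m(n)$ large enough that the piecewise-constant selection at level $m(n)$ really is a near-maximizer for the true $T^n_x$ at the given $x$. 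That threshold depends on $x$, and typically grows with $n$ (the approximants $T^{(m)}_{f^j x}$ must simultaneously be controlled for $j = 0, \dots, n-1$). But \eqref{eq:measCondition} gives only pointwise, not uniform, convergence $T^{(m)}_x \to T_x$, so no single $x$-independent sequence $m(n)$ is guaranteed to beat these $x$-dependent thresholds. The sublevel-set formulation avoids precisely this: the Lipschitz bound $d_H(\Sc_c(A), \Sc_c(B)) \leq c^{-1}|A - B|$ transfers pointwise operator convergence directly to pointwise $d_H$-convergence of sets, with no uniformity or selection needed.
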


\begin{proof}
We present the proof in the case $i = 2$; the general case is a straightforward adaptation of the same 
ideas, and is left to the reader.

\medskip

We use the following notation: for $A \in L(\Bc)$ and $c > 0$, we define
\[
\Sc_c(A) := \{ v \in B_1 : |A v| \leq c \} \, .
\]
It is easy to show that for $B \in L(\Bc)$, we have $d_H(\Sc_c(A), \Sc_c(B)) \leq c^{-1} |A - B|$; in particular, if $A_n \to A$ is a convergent sequence of bounded operators in the uniform norm, then $\Sc_c(A_n) \to \Sc_c(A)$ in $d_H$. In light of \eqref{eq:measCondition} it follows that $x \mapsto \Sc_c(T^n_x)$ is measurable in the sense of Definition \ref{defn:measurability} for any $n \geq 1, c > 0$.

To complete the proof, it suffices to show the following.
\begin{lem}\label{lem:sectionConverge}
Let $x \in \Gamma$, $\d \ll \l_1 - \l_2$. Then,
\begin{align}\label{eq:dhLimit}
B_1 \cap F_{\l_2}(x) =  d_H \operatorname{-} \lim_{n \to \infty} \Sc_{e^{n (\l_2 + \d)}}(T^n_x) \, .
\end{align}
Here, $d_H \operatorname{-} \lim$ denotes a limit in the Hausdorff topology on $\mathscr C$.
\end{lem}
\noindent Because the pointwise limit of measurable mappings is measurable, \eqref{eq:dhLimit} implies that $x \mapsto B_1 \cap F_{\l_2}(x)$ is measurable. It remains to prove Lemma \ref{lem:sectionConverge}. \qedhere

\begin{proof}[Proof of Lemma \ref{lem:sectionConverge}] 
Let $E \subset \Bc$ be a complement to $F_{\l_2}(x)$ for which $|\pi_{ F_{\l_2}(x) \ds E}| \leq \sqrt m_1 + 2$ (where $\codim F_{\l_2}(x) = m_1$; see Remark \ref{rmk:exponents}). By Proposition \ref{prop:staticMET}, we have the following for $n$ sufficiently large:
\begin{gather}
\label{eq:slowgrow} |T^n_x|_{F_{\l_2}(x)}| \leq e^{n (\l_2 + \d)} \, , \text{ and } \\
\label{eq:fastgrow} \min_{v \in E \setminus \{0\}} \frac{|T^n_x v|}{|v|} \geq e^{n (\l_1 - \d)} \, .
\end{gather}
We now prove \eqref{eq:dhLimit}. First, note that by \eqref{eq:slowgrow}, for $n$ sufficiently large we have that $B_1 \cap F_{\l_2}(x) \subset \Sc_{e^{n(\l_2  + \d)}}(T^n_x)$. Trivially, then, we have that
\[
\limsup_{n \to \infty} \sup_{v \in B_1 \cap F_{\l_2}(x)} d(v, \Sc_{e^{n(\l_2  + \d)}}(T^n_x)) = 0 \, .
\]

Going the other direction, fix $\e > 0$, and enlarge $n$ so that $(3 + \sqrt m_1) e^{n (\l_2 - \l_1 + 2 \d)} < \e$. Let $w \in \Sc_{e^{n(\l_2  + \d)}}(T^n x)$ and write $w = e + f, e \in E, f \in F_{\l_2}(x)$. Note that $|f| \leq (\sqrt m_1 + 2)$, and so applying \eqref{eq:slowgrow} and \eqref{eq:fastgrow},
\[
e^{n (\l_1 - \d)} |e| \leq |T^n_x e| \leq |T^n_x w| + |T^n_x f| \leq e^{n (\l_2 + \d)}  + (\sqrt m_1 + 2)e^{n (\l_2 + \d)} = (3 + \sqrt m_1) e^{n (\l_2 + \d)} \, ;
\]
it follows that $|e| \leq \e$, and so in fact we have the bound $|f| \leq |w| + |e| \leq 1 + \e$. Thus,
\[
d(w, B_1 \cap F_{\l_2}(x))  \leq |w - \frac{1}{1 + \e} f| \leq |w - f| + \e = |e| + \e \leq 2 \e \, ;
\]
we have shown that
\[
\limsup_{n \to \infty} \sup_{w \in \Sc_{e^{n(\l_2  + \d)}}(T^n_x)} d(w, B_1 \cap F_{\l_2}(x)) \leq 2 \e \, .
\]
Taking $\e \to 0$ completes the proof.
\end{proof}
\end{proof}

\bibliography{biblio}
\bibliographystyle{plain}

\end{document}